\theoremstyle{plain}
\newtheorem{thm}{Theorem}[section]
\newtheorem{lem}[thm]{Lemma}
\newtheorem{cor}[thm]{Corollary}
\newtheorem{prp}[thm]{Properties}
\newtheorem{fact}[thm]{Fact}
\theoremstyle{definition}
\newtheorem{df}[thm]{Definition}
\newtheorem{exm}[thm]{Example}
\newtheorem{exs}[thm]{Examples}
\newtheorem{prob}[thm]{Problem}
\theoremstyle{remark}
\newtheorem{rem}[thm]{Remark}
\newtheorem{rms}[thm]{Remarks}
\newcommand{\CCC}{\mathbb{C}}
\newcommand{\NNN}{\mathbb{N}}
\newcommand{\RRR}{\mathbb{R}}
\newcommand{\SSS}{\mathbb{S}}\newcommand{\TTT}{\mathbb{T}}
\newcommand{\ZZZ}{\mathbb{Z}}%% '\mathbb k'
\newcommand{\FFf}{\CMcal{F}}
\newcommand{\HHh}{\CMcal{H}}\newcommand{\IIi}{\CMcal{I}}
\newcommand{\KKk}{\CMcal{K}}
\newcommand{\PPp}{\CMcal{P}}
\newcommand{\SSs}{\CMcal{S}}
\newcommand{\Aa}{\mathfrak{A}}\newcommand{\Bb}{\mathfrak{B}}\newcommand{\Cc}{\mathfrak{C}}
\newcommand{\Ff}{\mathfrak{F}}
\newcommand{\Ii}{\mathfrak{I}}
\newcommand{\Kk}{\mathfrak{K}}
\newcommand{\Uu}{\mathfrak{U}}
\newcommand{\Zz}{\mathfrak{Z}}
\numberwithin{equation}{section}
\newcommand{\eps}{\varepsilon}
\newcommand{\cl}{\overline}
\newcommand{\lin}{\textup{lin}}
\newcommand{\oo}{\infty}
\newcommand{\id}{\textup{id}}
\newcommand{\diam}{\textup{diam}}
\newcommand{\Rep}{\textup{Rep}}
\newcommand{\Irr}{\textup{Irr}}
\newcommand{\w}{\omega}
\newcommand{\Orb}{\textup{Orb}}
\newcommand{\Prob}{\textup{Prob}}
\newcommand{\Hom}{\textup{Hom}}
\newcommand{\Lip}{\textup{Lip}}
\title{Uniformity in $C^*$-algebras}
\author{Adam Wegert}
\address{A. Wegert{}\\Faculty of Applied Mathematics\\{}AGH University of Science and Technology\\{}al. Mickiewicza 30\\{}30-059 Krak\'{o}w\\{}Poland}
\email{a\_wegert@o2.pl}
\date{}
\begin{document}

%http://sjp.pwn.pl/slowniki/taki-%C5%BCe.html-czasem piszemy przecinek przed "taki ¿e"
%kropka po klamrach ???
%zarowno,..., jak i ...,otrzymujemy?

%namiary na ksiazki o grupach kwantowych

%http://tex.stackexchange.com/questions/57743/how-to-write-%C3%A4-and-other-umlauts-and-accented-letters-in-bibliography <--umlauty
%namiary na *produkty i deformacje i zwiazki z Hochschildem
%przed ZAŒ ma byæ przecinek
%przed ABY ma byæ przecinek
%przed CZY stawia sie przecinek np. w "`Powiedz mi, czy to prawda"' ale nie w "Kawa czy herbata"
%przecinek stawia siê przed LECZ
%przecinek stawia sie przed DLATEGO jak rowniez czasem po DLATEGO np.
%Musze sprzedaæ swoje stare wazony, dlatego wezmê je jutro na rynek
%Nie zda³a egzaminu dlatego, ¿e by³a nienauczona.
%WTEDY I TYLKO WTEDY, GDY-ma byc tam przecinek
%cCHYBA ¯E bez przecinka; przecinek przed CZYLI, ZATEM
%przecinek stawia siê przed A TAK¯E albo przed JAK ROWNIE¯
%przecinek stawia siê przed WIÊC
%ALBO: albo pojde do kina, albo do teatru/ pojde do kina albo do teatru
%przed LUB nie stawiamy przecinka
%przecinek stawia siê przed ALE

\begin{abstract} 
We introduce a notion of a uniform structure on the set of all representations of a given separable, not necessarilly commutative $C^*$-algebra $\Aa$ by introducing a suitable family of metrics on the set of representations of $\Aa$ and investigate its properties. We define the noncommutative analogue of the notion of the modulus of continuity of an element in $C^*$-algebra and we establish its basic properties. We also deal with morphisms of $C^*$-algebras by defining two notions of uniform continuity and show their equivalence.  
\end{abstract}
\subjclass[2000]{46L10.}
\keywords{$C^*$-algebra; irreducible representation; uniform continuity.}

\maketitle

\section*{Introduction}

The famous theorem of Gelfand and Najmark establishes a one-to-one correspondence between 
compact topological spaces and unital, commutative $C^*$-algebras. This correspondence is well behaved in the sense that it is in fact a natural equivalence between categories of compact topological spaces and unital, commutative $C^*$-algebras. Therefore various topological properties of spaces may be translated into algebraic properties of the corresponding $C^*$-algebra. The philosophy of \textit{noncommutative} topology is to translate topological properties of spaces into the language of algebras and check whether the assumption of commutativity of a given algebra is necessary: if not, then one can state the definition in the context of noncommutative algebras and think that the underlying \textit{noncommutative space} possess given topological property. Our aim is to discuss the topological, or rather metric-space theoretic, notion of \textit{uniformity} in the context of noncommutative $C^*$-algebras.

\section{Notation and terminology}

In this section we will recall standard definitions just to fix notation. 
All considered vector spaces will be over the field $\CCC$ of complex numbers. $C^*$-algebras will be usually denoted by $\Aa,\Bb,\Cc$ etc. For an algebra $\Aa$, $\Zz(\Aa)$ will denote its center. When two objects $X,Y$ (in a given category) are isomorphic we will denote this fact by $X \cong Y$ (the category should be clear from the context). In the context of (unital) $C^*$-algebras, isomorphism is understood as (unital) bijective, $*$-preserving homomorphism (which is automatically isometric). For an element $x$ in a unital $C^*$-algebra by $r(x)$ we will denote its spectral radius defined as $r(x)=\sup\{|\lambda|: \lambda \in \sigma(x)\}$
where $\sigma(x)$ denotes \textit{the spectrum} of $x$. By $\SSs(\Aa)$ we will denote the state-space of $\Aa$ and $\PPp(\Aa)$ will denote its subspace consisting of pure states. $\HHh$ will stand for a (usually separable) Hilbert space, $\Bb(\HHh)$ for the algebra of all bounded operators on $\HHh$ and $\Uu(\HHh)$ for the group of unitary operators on $\HHh$.  
By $I$ we will mean the identity operator, occasionally we will write $I_{\HHh}$ to indicate in which space this operator acts. 
For $S \in \Bb(\HHh), T \in \Bb(\KKk)$ by $S \oplus T$ we will understand the operator in $\Bb(\HHh \oplus \KKk)$ defined by $(S \oplus T)(x,y):=(Sx,Ty)$. For a family of $C^*$-algebras $(\Aa_i)_{i \in \IIi}$ by $\prod_{i \in \IIi}\Aa_i$ we will denote the set$$\{(a_i)_{i \in \IIi}:a_i \in \Aa_i, \ i \in \IIi, \ \sup_{i \in \IIi}\|a_i\|<\oo\}.$$ With pointwise operations and supremum norm it becomes a $C^*$-algebra.
If all $\Aa_i$'s are unital, then so is $\prod_{i \in \IIi}\Aa_i$. On the other hand, by
$\bigoplus_{i \in \IIi}\Aa_i$ we will mean the \textit{direct sum} of algebras $\Aa_i$, i.e. the set of all $(a_i)_{i \in \IIi}$ \textit{vanishing at infinity} (meaning that for each $\eps>0$ there is a finite set $\IIi_{\eps} \subset \IIi$ such that for all $i \in \IIi \setminus \IIi_{\eps}$, $\|a_i\|<\eps$ holds) with pointwise operations. For a nonunital $C^*$-algebra $\Aa$ we denote by $\Aa^{+}$ the \textit{unitization} of $\Aa$. Everytime we have a short exact sequence of $C^*$-algebras of the form
\begin{equation} \label{exact} 0 \longrightarrow \Aa' \stackrel{\varphi}\longrightarrow \Aa \stackrel{\psi}\longrightarrow \Aa'' \to 0
\end{equation}
we would say that $\Aa$ is an \textit{extension} of $\Aa''$ by $\Aa'$. By a
\textit{representation} of a $C^*$-algebra (on a Hilbert space $\HHh$) we mean a $*$-homomorphism $\pi:\Aa \to \Bb(\HHh)$ and if $\Aa$ is unital we will usually assume that $\pi(1)=I$. The space on which the algebra $\Aa$ acts via representation $\pi$ will occasionally be denoted by $\HHh_{\pi}$.

Given two representations $\pi_i:\Aa \to \Bb(\HHh_i), \ i=1,2$ we will denote by $\pi_1 \oplus \pi_2$ their direct sum defined by the formula $\pi_1 \oplus \pi_2: \Aa \ni a \mapsto \pi_1(a) \oplus \pi_2(a) \in \Bb(\HHh_1 \oplus \HHh_2)$. Similarly we define the direct sum $\bigoplus_{i \in \IIi}\pi_i$ of an arbitrary family of representations $\{\pi_i\}_{i \in \IIi}$. In particular if for $i,j \in \IIi$ we have $\pi_i=\pi_j=:\pi$ and $|\IIi|=\alpha$, then we will denote the sum $\bigoplus_{i \in \IIi}\pi_i$ by $\alpha \odot \pi$. Occasionally we will use the same notation for operators in Hilbert spaces. 
On the set of all representations of $\Aa$ on a Hilbert space $\HHh$ we can define the \textit{point-norm} convergence as follows: if $\{\pi_{\sigma}\}_{\sigma}$ is a net of representations of $\Aa$ on a space $\HHh$ we declare $\pi_{\sigma} \to \pi$ in the point-norm topology, if for any $a \in \Aa$ we have $\pi_{\sigma}(a) \stackrel{\| \cdot \|}{\longrightarrow} \pi(a)$.
In other words, it is the topology of the family of mappings $\pi \mapsto \|\pi(a)\|$, $a \in \Aa$. We also consider the \textit{compact-open} topology, defined as follows: given a net  $\{\pi_{\sigma}\}_{\sigma}$ of representations we declare $\pi_{\sigma} \to \pi$ in the compact-open topology if for any compact set $L \subset \Aa$ we have $\sup_{a \in L}\|\pi_{\sigma}(a)-\pi(a)\| \to 0$. In other words, this is the topology of convergence on compacta. 
For any two unital  $C^*$-algebras $\Aa,\Bb$ by $\Hom(\Aa,\Bb)$ we will mean the set of all unital *-homomorphisms $\Aa \to \Bb$.

\section{Preliminaries}
A well known result of Gelfand and Najmark establishes a one-to-one correspondence between unital commutative $C^*$-algebras and compact (Hausdorff) topological spaces. In the case of nonunital algebras one have to deal with spaces which are \textit{locally} compact. This correspondence is in fact functorial and allows one to translate topological properties into the algebraic language and vice versa. In the table below we gather some basic correspondences between topological and algebraic notions:
\begin{center}
\begin{tabular}{|r|l|}
  \hline
	\textbf{Topology} & \textbf{Algebra} \\
	\hline
  Point & Character\\
  \hline
  Closed set & Ideal   \\
  \hline
	Embedding & Epimorphism \\
	\hline
	(Continuous) Surjection & Monomorphism \\
	\hline
	Homeomorphism & Automorphism \\
	\hline
	Disjoint sum & Direct sum \\
	\hline
	Cartesian product & Tensor product \\
	\hline
	Connectedness & Lack of nontrivial projections \\
	\hline
	Probabilistic measure & State \\
	\hline
\end{tabular}
\end{center}
\par

This is a reason to think about general (not necessarily commutative) $C^*$-algebras as \textit{noncommutative topological spaces}. This philosophy is a part of the much more general program called \textit{Noncommutative Geometry} (see e.g. \cite{Con, Var, Kha}).  
For our purposes we recall one more correspondence in the above spirit:
\begin{thm} Let $X$ be a compact Hausdorff space. Then the following conditions are equivalent:
\begin{enumerate}
\item the algebra $C(X)$ is separable;
\item $X$ is metrisable.
\end{enumerate}
\end{thm}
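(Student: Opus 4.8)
The plan is to prove the two implications separately, in each case using compactness of $X$ in an essential way.

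\emph{From metrisability to separability.} Suppose $X$ carries a metric $d$ inducing its topology. A compact metric space is separable, so I would fix a countable dense set $\{x_n\}_{n\in\NNN}\subset X$ and put $f_n:=d(\cdot,x_n)\in C(X)$. The family $\{f_n\}_{n\in\NNN}$ separates the points of $X$, and the unital subalgebra $\Aa_0\subset C(X)$ it generates is self-adjoint (the $f_n$ are real-valued) and contains the constants; by the Stone--Weierstrass theorem $\Aa_0$ is dense in $C(X)$. Taking finite products of the $f_n$ and then finite $(\QQQ+i\QQQ)$-linear combinations of such products yields a countable set which is dense in $\Aa_0$, hence in $C(X)$. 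So $C(X)$ is separable.

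\emph{From separability to metrisability.} Suppose $\{f_n\}_{n\in\NNN}$ is dense in $C(X)$. I would define
$$d(x,y):=\sum_{n=1}^{\oo}\frac{1}{2^{n}}\,\frac{|f_n(x)-f_n(y)|}{1+\|f_n\|_{\oo}}\qquad(x,y\in X).$$
This series converges, and $d$ is clearly symmetric and satisfies the triangle inequality. The one point needing an argument is that $d(x,y)=0$ forces $x=y$: since $X$ is compact Hausdorff, hence normal, Urysohn's lemma gives, for $x\neq y$, a function $g\in C(X)$ with $g(x)\neq g(y)$; choosing $n$ with $\|f_n-g\|_{\oo}<\tfrac12|g(x)-g(y)|$ makes $f_n(x)\neq f_n(y)$, so $d(x,y)>0$. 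Thus $d$ is a metric on $X$.

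\emph{Comparison of topologies.} It remains to check that $d$ induces the original topology $\tau$ on $X$. Each partial sum of the series $y\mapsto d(x,y)$ is $\tau$-continuous and the convergence is uniform in $y$, so $y\mapsto d(x,y)$ is $\tau$-continuous for every fixed $x$; hence every open $d$-ball is $\tau$-open, i.e. $\id\colon (X,\tau)\to(X,d)$ is continuous. Being a continuous bijection from a compact space onto a Hausdorff space, it is a homeomorphism, so $\tau$ coincides with the metric topology of $d$ and $X$ is metrisable. The main obstacle — and really the only subtle step — is this last comparison: continuity of the $f_n$ alone gives only that the $d$-topology is coarser than $\tau$, and it is precisely compactness of $X$ that upgrades this to equality (just as it is compactness that yields separability in the first implication); a minor auxiliary point is that a dense subset of $C(X)$ still separates points, which rests on Urysohn's lemma for the normal space $X$.
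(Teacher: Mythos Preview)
Your proof is correct. Note, however, that the paper states this theorem in the Preliminaries section as a recalled classical fact and does not supply a proof, so there is nothing to compare your argument against. Your two implications are the standard ones: Stone--Weierstrass for $(2)\Rightarrow(1)$, and an explicit series metric plus the compact-to-Hausdorff homeomorphism trick for $(1)\Rightarrow(2)$; both are carried out cleanly.
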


The classical spectrum $\hat{\Aa}$ (the set of characters) may be empty for a general $C^*$-algebra. There are several natural candidates for a generalisation of $\widehat{\Aa}$ for arbitrary $\Aa$: one candidate is the space of pure states on $\Aa$ (such states exist in abundance). Another is the space of all \textit{primitive} ideals of $\Aa$ (i.e. kernels of irreducible representations) or finally the space of (classes of unitary equivalence of) irreducible representations of algebra an $\Aa$. However for a generic $C^*$-algebra, the space of classes of irreducible representations has poor topology (not even $T_0$). From this reason we choose to work with genuine representations (instead of classes of unitary equivalence).

\begin{df} Let $n$ be a fixed positive integer. A $C^*$-algebra $\Aa$ is called \textit{n-homogenous} if for every irreducible representation $\pi:\Aa \to \Bb(\HHh_{\pi})$ we have $\dim \HHh_{\pi}=n$.
If instead of equality, we have inequality, i.e. $\dim \HHh_{\pi} \leq n$ then $\Aa$ is called \textit{n-subhomogeneous} or $n$-SH in short. If there is some $n \in \NNN$ such that $\Aa$ is $n$-homogenous (resp. $n$-subhomogenous) then $\Aa$ is called \textit{homogenous} (resp. \textit{subhomogenous}, or SH in short).
\end{df}

Homogenous $C^*$-algebras where characterised by Fell in 1961 in \cite{Fel} and also by Tomiyama and Takesaki. The description obtained by them uses \textit{fibre bundles}---an alternative approach can be found in \cite{Nie2}. On the other side, subhomogenous $C^*$-algebras were characterised in 1966 in \cite{Vas}. Alternative approach in terms of special category of the so called \textit{proper towers} was obtained in \cite{Nie4}. From this paper comes also the following definition:

\begin{df} A $C^*$-algebra $\Aa$ is called \textit{shrinking} if it is residually finite dimensional and satisfies the following condition: if $\{\pi_n\}_{n \in \NNN}$ is a sequence of irreducible representations of $\Aa$ with $\dim \HHh_{\pi_n} \to \oo$ then $\lim_{n \to \oo} \| \pi_n(a)\| \to 0$ for any $a \in \Aa$.
\end{df}

Recall that a $C^*$-algebra $\Aa$ is called \textit{residually finite dimensional (RFD in short}) if the set of its finite dimensional representations separates the points of $\Aa$. \par
Directly from the definition, each subhomogenous $C^*$-algebra is shrinking: the converse implication is not true in general. However, it is true for unital $C^*$-algebras, since 
$\|\pi_n(1)\|=\|I_{\HHh_{\pi_n}}\|=1$ for any irreducible representation $\pi$. 
Note also that any irreducible representation of a shrinking $C^*$-algebra is necessarily finite dimensional.

\subsection{Concave moduli of continuity}
In our work we propose the noncommutative analogue of modulus of continuity, for an element in general $C^*$-algebra. For our purposes we state the following definition:
\begin{df} Let $(X,d_x)$ and $(Y,d_Y)$ be two metric spaces. A function $\w:[0,\oo) \to [0,\oo)$ is called \textit{a modulus of (uniform) continuity} for $f:X \to Y$ if $\w(0)=0$, $\w$ is continuous, nondecreasing, concave and satisfies:
$$d_Y\big(f(x_1),f(x_2)\big) \leq w\big(d_X(x_1,x_2)\big).$$
We will use the following notation:
$$\Omega=\{w:[0,\oo) \to [0,\oo): \ \w \ \textup{is continuous, concave, nondecreasing and} \ \w(0)=0\}.$$
\end{df}

A function $f:X \to \RRR$ admits concave modulus of continuity iff $f$ is uniform limit of Lipschitz functions. This holds in particular if $(X,d)$ is compact.

\begin{rem} Concavity of $\w$ is understood as satisfying \textit{weak} inequality: $$t\w(x)+(1-t)\w(y) \leq \w(tx+(1-t)y);$$ in particular we allow constant functions to be concave thus  the zero function $0$ could serve as a modulus of continuity (for a constant function). 
\end{rem}
If $f \in C(X)$ is uniformly continuous then we can consider its minimal modulus of continuity $\w_f \in \Omega$. Then the mapping
$f \mapsto \w_f$ has the following properties:
\begin{itemize}
\item $\w_{f+g} \leq \w_f+\w_g$;
\item $\w_{cf}=|c|\w_f$;
\item $\w_{\overline{f}}=\w_f$;
\item $\w_{fg} \leq \|f\| \w_g+\|g\| \w_f$.
\end{itemize}

For further considerations in which we will define the analogue of modulus of continuity for an element in a (unital, separable) not necessarily commutative $C^*$-algebra we recall the following result (see \cite{Aro}):
\begin{thm}[Aronszajn-Panitchpakdi]\label{AP} Let $f:[0,\oo) \to [0,\oo]$ be a nondecreasing function with $f(0)=0$. Then the following conditions are equivalent:
\begin{enumerate}
\item there exists a concave, nondecreasing, continuous function $\omega:[0,\oo) \to [0,\oo)$ with $\omega(0)=0$ satisfying $f \leq \omega$;
\item there exists subadditive, nondecreasing, continuous function $\omega:[0,\oo) \to [0;\oo)$ with $w(0)=0$ satisfying $f \leq \omega$;
\item $\lim_{t \to 0^+}f(t)=0$ and $\lim \sup_{t \to \oo}\frac{f(t)}{t} < \oo$.
\end{enumerate}
\end{thm}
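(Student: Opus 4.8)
The plan is to prove the cycle of implications $(1)\Rightarrow(2)\Rightarrow(3)\Rightarrow(1)$; the first two steps are routine and the substance lies in the last one. For $(1)\Rightarrow(2)$ I would show that the very same $\omega$ works, since a concave function $\omega\colon[0,\oo)\to[0,\oo)$ with $\omega(0)=0$ is automatically subadditive: for $s,t>0$, applying concavity to the representations $s=\tfrac{s}{s+t}(s+t)+\tfrac{t}{s+t}\cdot 0$ and $t=\tfrac{t}{s+t}(s+t)+\tfrac{s}{s+t}\cdot 0$ gives $\omega(s)\ge\tfrac{s}{s+t}\omega(s+t)$ and $\omega(t)\ge\tfrac{t}{s+t}\omega(s+t)$, and adding yields $\omega(s)+\omega(t)\ge\omega(s+t)$; monotonicity, continuity and the normalisation carry over verbatim. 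For $(2)\Rightarrow(3)$, pick a subadditive, nondecreasing, continuous $\omega\ge f$ with $\omega(0)=0$; from $0\le f\le\omega$ and continuity of $\omega$ at $0$ we get $\lim_{t\to0^+}f(t)=0$, while writing $t=n+r$ with $n=\lfloor t\rfloor$, $r\in[0,1)$ and using subadditivity and monotonicity gives $\omega(t)\le n\,\omega(1)+\omega(r)\le(t+1)\omega(1)$, so $\limsup_{t\to\oo}f(t)/t\le\limsup_{t\to\oo}\omega(t)/t\le\omega(1)<\oo$.

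For the core implication $(3)\Rightarrow(1)$ I would build $\omega$ as the least affine majorant of $f$. First note $f$ is finite-valued on $[0,\oo)$, since $f(t_0)=\oo$ would force $f\equiv\oo$ on $[t_0,\oo)$ by monotonicity, contradicting the growth hypothesis. Fix $M'>\limsup_{t\to\oo}f(t)/t$ and $T$ with $f(t)\le M't$ for $t\ge T$; with $C:=f(T)<\oo$ the affine map $t\mapsto C+M't$ dominates $f$ everywhere, so the family $\LLl$ of affine functions $\ell\colon[0,\oo)\to\RRR$ with $\ell\ge f$ is nonempty. Set
\[
\omega(t):=\inf_{\ell\in\LLl}\ell(t),\qquad t\in[0,\oo).
\]
Then $0\le f\le\omega<\oo$, and $\omega$ is concave as a pointwise infimum of affine functions. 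To obtain $\omega(0)=0$, given $\eps>0$ use $f(0)=0$ together with $\lim_{t\to0^+}f(t)=0$ to find $\delta>0$ with $f\le\eps$ on $[0,\delta]$; putting $\alpha:=\max\{C/\delta,\,M'\}$ one checks, splitting at $T$, that $f(t)\le\alpha t$ for all $t\ge\delta$, so $\ell_\eps\colon t\mapsto\eps+\alpha t$ lies in $\LLl$ and hence $\omega(0)\le\eps$; since also $\omega(0)\ge f(0)=0$, this forces $\omega(0)=0$. A nonnegative concave function with value $0$ at the origin is nondecreasing (otherwise concavity would drive it to $-\oo$), and $\omega$ is continuous on $(0,\oo)$ as a concave function on an open interval; at $0$, monotonicity together with $\omega\le\ell_\eps$ gives $\lim_{t\to0^+}\omega(t)\le\eps$ for every $\eps>0$, whence $\lim_{t\to0^+}\omega(t)=0=\omega(0)$. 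Thus $\omega\in\Omega$ and $f\le\omega$, which is (1).

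I expect the delicate point to be precisely the endpoint behaviour of the constructed $\omega$ in $(3)\Rightarrow(1)$: the two halves of condition (3) are used in complementary ways — the limit at $0$ is what lets one push the intercept of a supporting line all the way down to $0$ (so that $\omega(0)=0$ and $\omega$ is continuous at the origin), whereas the asymptotic linear bound is what keeps the infimum $\omega$ finite — and one must make sure the single affine function $\ell_\eps$ does both jobs at once, i.e. has small intercept \emph{and} large enough slope to stay above $f$ both near $0$ and at infinity.
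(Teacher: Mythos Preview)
The paper does not supply a proof of this theorem; it is quoted as a known result with a reference to Aronszajn--Panitchpakdi \cite{Aro}, so there is no ``paper's own proof'' to compare against. Your argument is correct and complete: the implications $(1)\Rightarrow(2)$ and $(2)\Rightarrow(3)$ are handled cleanly, and your construction for $(3)\Rightarrow(1)$ via the least affine majorant (the concave hull) is the standard one, with the endpoint verification at $0$ carried out carefully using both halves of hypothesis~(3) exactly as you anticipate.
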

\begin{cor}\label{AP'} A function $\omega$ as above exists if $f$ is nondecreasing, \textit{bounded} and $f(0)=\lim_{t \to 0^+}f(t)$.
\end{cor}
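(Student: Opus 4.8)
The plan is simply to reduce to Theorem~\ref{AP} by checking its condition~(3) for the function $f$ at hand. Recall that in the setting of that theorem one has $f(0)=0$, so the hypothesis $f(0)=\lim_{t\to 0^+}f(t)$ of the corollary says precisely that $\lim_{t\to 0^+}f(t)=0$, which is the first half of condition~(3). One can also remark that, $f$ being nondecreasing, the limit $\lim_{t\to 0^+}f(t)$ automatically exists and equals $\inf_{t>0}f(t)$, so the extra hypothesis is really just right-continuity of $f$ at the origin; this makes the verification of the first half of~(3) immediate.

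For the second half of condition~(3) I would use boundedness of $f$ directly: writing $M:=\sup_{t\ge 0}f(t)<\oo$, for every $t>0$ we have $\tfrac{f(t)}{t}\le \tfrac{M}{t}$, and the right-hand side tends to $0$ as $t\to\oo$. Hence $\limsup_{t\to\oo}\tfrac{f(t)}{t}=0<\oo$, so condition~(3) of Theorem~\ref{AP} holds in full.

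By the implication (3)$\Rightarrow$(1) in Theorem~\ref{AP}, there is then a concave, nondecreasing, continuous function $\omega:[0,\oo)\to[0,\oo)$ with $\omega(0)=0$ and $f\le\omega$ --- exactly the function ``as above'' whose existence was asserted. I do not anticipate any genuine obstacle here: the corollary is a direct specialisation of the Aronszajn--Panitchpakdi theorem, the only mildly delicate point being to record that boundedness is a (very strong) sufficient condition for the growth estimate $\limsup_{t\to\oo}f(t)/t<\oo$, and that right-continuity at $0$ together with $f(0)=0$ is what turns the monotone limit $\lim_{t\to 0^+}f(t)$ into the value $0$ required by~(3).
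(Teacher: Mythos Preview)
Your proposal is correct and is exactly the intended argument: the paper states the corollary without proof, as an immediate consequence of Theorem~\ref{AP}, and your verification of condition~(3) (right-continuity at $0$ giving $\lim_{t\to 0^+}f(t)=0$, boundedness giving $\limsup_{t\to\oo}f(t)/t=0$) is the natural and complete justification.
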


\subsection{Ascoli Theorem and some remarks about convergence}
Further we will investigate the notion of compactness in the context of $C^*$-algebras therefore we recall the classical Ascoli-Arzela theorem:
\begin{thm} Let $X,Y$ be two metric spaces and assume that $X$ is compact.Then the set $K \subset C(X,Y)$ 
is relatively compact (in the uniform topology) if and only if $K$ is equicontinuous and  pointwise relatively compact.
\end{thm}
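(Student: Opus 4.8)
The plan is to prove the two implications separately, using throughout that in a metric space relative compactness coincides with sequential relative compactness, i.e.\ every sequence in $K$ admits a subsequence converging in the ambient space $C(X,Y)$ equipped with the sup-metric $d_\infty(f,g)=\sup_{x\in X}d_Y(f(x),g(x))$.

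For the forward implication I would assume $K$ relatively compact, so that $\overline{K}$ is compact. Pointwise relative compactness is then immediate: for each $x\in X$ the evaluation $\mathrm{ev}_x\colon C(X,Y)\to Y$, $f\mapsto f(x)$, is $1$-Lipschitz, hence continuous, so $\mathrm{ev}_x(\overline{K})$ is compact and contains $\{f(x):f\in K\}$. For equicontinuity I would fix $\eps>0$, cover $\overline{K}$ by finitely many balls $B_{d_\infty}(f_i,\eps/3)$, $i=1,\dots,n$, use that each $f_i$ is uniformly continuous on the compact space $X$ to obtain $\delta_i>0$ witnessing continuity of $f_i$ at level $\eps/3$, and set $\delta=\min_i\delta_i$; the usual estimate $d_Y(f(x),f(x'))\le d_Y(f(x),f_i(x))+d_Y(f_i(x),f_i(x'))+d_Y(f_i(x'),f(x'))$ then gives equicontinuity of all of $K$ at $\delta$.

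For the converse I would take a sequence $(f_n)\subset K$ and first extract a subsequence converging pointwise on a countable set: since $X$ is a compact metric space it is separable, so I fix a countable dense set $\{x_k\}_{k\in\NNN}$, apply pointwise relative compactness at $x_1,x_2,\dots$ in turn, and pass to a diagonal subsequence $(g_j)$ with $(g_j(x_k))_j$ convergent in $Y$ for every $k$. Equicontinuity together with compactness of $X$ upgrades this to a uniform Cauchy condition: given $\eps>0$ I pick $\delta$ from equicontinuity, cover $X$ by finitely many $\delta$-balls centred at $x_{k_1},\dots,x_{k_m}$, and once $j,j'$ are large enough that $d_Y(g_j(x_{k_i}),g_{j'}(x_{k_i}))<\eps/3$ for all $i$, the triangle inequality through the nearest centre gives $d_\infty(g_j,g_{j'})\le\eps$.

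Finally I would produce the limit inside $C(X,Y)$: for each $x\in X$ the sequence $(g_j(x))_j$ is Cauchy and lies in the set $\overline{\{f(x):f\in K\}}$, which is compact and hence complete, so it converges to some $g(x)\in Y$; a uniformly Cauchy sequence that converges pointwise converges uniformly, so $g_j\to g$ in $d_\infty$, and as a uniform limit of continuous maps $g$ lies in $C(X,Y)$. Hence every sequence in $K$ has a subsequence converging in $C(X,Y)$, which is what was to be shown. The step I expect to require the most care is exactly this last one: since $Y$ is not assumed complete, pointwise relative compactness has to be invoked a second time to guarantee that the pointwise Cauchy limits exist and remain in $Y$.
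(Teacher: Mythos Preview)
Your proof is correct and is the standard textbook argument for the Arzel\`a--Ascoli theorem; in particular you handle the subtlety that $Y$ is not assumed complete exactly right, by using pointwise relative compactness a second time to trap the Cauchy sequences $(g_j(x))_j$ inside the compact (hence complete) sets $\overline{\{f(x):f\in K\}}$. The paper itself gives no proof of this statement --- it is recalled in the preliminaries as a classical fact --- so there is nothing to compare against.
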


Suppose that $(X,d_X),(Y,d_Y)$ are two compact metric spaces. On the set $C(X,Y)$ of all continuous mappings $X \to Y$ we can consider the topology of pointwise convergence. Then it turns out that for a net $(u_s)_s$ in $C(X,Y)$ we have $u_s \to u$ pointwise if and only if for every function $f \in C(Y)$ we have $f \circ u_s \to f \circ u$ pointwise. Indeed: if $u_s(x) \to u(x)$ for every $x \in X$ then $f(u_s(x)) \to f(u(x))$ for every $f \in C(Y)$. Conversely, assume that for every $f \in C(Y)$ and every $x \in X$ we have $f(u_s(x)) \to f(u(x))$ and suppose that for some $x_0 \in X$ we have $u_s(x_0) \nrightarrow u(x_0)$. Passing to a subsequence (as we can, since $Y$ is compact) we can assume that there is some $y \in Y$, $y \neq u(x_0)$ such that $u_s(x_0) \to y$. But then for every $f \in C(Y)$ we conclude $f(u_s(x_0)) \to f(y)$ and $f(u_s(x_0)) \to f(u(x_0))$ hence $f(u(x_0))=f(y)$. Since continuous functions separate points we conclude $u(x_0)=y$, which yields a contradiction. \\
On $C(X,Y)$ we can also consider the topology of uniform convergence: then it turns out that $u_n \rightrightarrows u$ (where the symbol $\rightrightarrows$ means uniform convergence) if and only if for every continuous function $f \in C(Y)$ we have $f \circ u_n \rightrightarrows f \circ u$. Indeed, let $u_n \rightrightarrows u$ and $f \in C(Y)$. Then $f$ is uniformly continuous: fix $\eps >0$ and find $\delta>0$ such that for $y_1,y_2 \in Y$ satisfying $d_Y(y_1,y_2)<\delta$ the inequality $|f(y_1)-f(y_2)|<\eps$ holds. Let $s_0$ be such that for $s>s_0$ the inequality $d_Y(u_s(x),u(x))<\delta$ is satisfied, uniformly with respect to $x$: then for $s>s_0$ we conclude
$$|f(u_s(x))-f(u(x))|<\eps$$
uniformly with respect to $x$. Thus $f \circ u_n \rightrightarrows f \circ u$.
Conversely, let us assume that $f \circ u_n \rightrightarrows f \circ u$ for any $f \in C(Y)$ but it does not hold that $u_n \rightrightarrows u$.
Uniform convergence $u_n \rightrightarrows u$ is equivalent to the following condition: for every sequence $x_n \to x$, $u_n(x_n) \to u(x)$ (see for example \cite{Kur}), therefore our assumption tells us that $u_n(x_n) \nrightarrow u(x)$ for some $x \in X$ and some sequence $x_n \to x$. Since $Y$ is compact we can assume that $u_n(x_n) \to y$ for some $y$ and thus $f(u_n(x_n)) \to f(y)$ but from uniform convergence $f \circ u_n \rightrightarrows f \circ u$, we also have that $f(u_n(x_n)) \to f(u(x))$. As $f$ was arbitrary we must have $y=u(x)$. \par
In the classical context the uniform convergence $u_n \rightrightarrows u$ is more natural than the condition that $f \circ u_n \rightrightarrows f \circ u$ for every $f \in C(Y)$. 
However this second condition is better suited to noncommutative generalisations: 
if $u \in C(X,Y)$ is continuous then $u$ determine *-homomorphism $u^*:C(Y) \to C(X), \ u^*(f)=f \circ u$. Then the uniform convergence $f \circ u_n \rightrightarrows f \circ u$ is equivalent to the point-norm convergence of the sequence of *-homomorphisms $u_n^* \to u^*$.
\begin{rem} If we would define $u_n \to u$ using the condition of convergence in the operator norm $\|u_n^*-u^*\| \to 0$ we would get convergence in the discrete topology i.e. any convergent sequence $(u_n)_n$ would be eventually constant. Therefore we will not consider the convergence in the operator norm on $\Hom(\Aa,\Bb)$.
\end{rem}

\section{Motivation}
Let $X$ be a compact, metrisable space and let $d$ be a metric on $X$  inducing the original topology. Let $R:=\diam X>0$ be a (finite) diameter of the space $X$ (we assume that $X$ contains at least two distinct points). Let us define:
$$E:=\{f:X \to [0,R]: f \ \textup{is a contraction with respect to} \ d\}.$$
Then $E$ has the following properties:
\begin{enumerate}
\item $\cl{E}$ is compact: indeed, $E$ is an equicontinuous family of functions and directly from the definition is pointwise bounded. Hence applying Ascoli-Arzela theorem we infer that $E$ is relatively compact.
\item $E$ separates the points of $X$: indeed, it suffices to consider the functions $\{f_x\}_{x \in X} \subset E$ defined by $f_x(y):=d(x,y)$.
\item For each $x,y \in X$ we have a formula $$d(x,y)=\sup_{f \in E}|f(x)-f(y)|:$$
indeed, for $f \in E$ we have $|f(x)-f(y)| \leq d(x,y)$ hence $\sup_{f \in E}|f(x)-f(y)| \leq d(x,y)$. On the other hand for $f_x=d(x,\cdot)$ we obtain the equality.
\end{enumerate}
Conversely, assume that $E$ satisfies conditions (1) i (2) and define $d(x,y)$ by the formula (3). This formula indeed defines a metric:
\begin{itemize}
\item $d(x,y)$ is finite since for fixed $x,y$ the mapping
$$C(X) \ni f \mapsto |f(x)-f(y)| \in \RRR$$ is continuous, hence bounded on the compact set $\cl{E}$.
\item The condition $d(x,y)=0$ is equivalent to $f(x)=f(y)$ for $f \in E$ so from (2) we have $x=y$.
\item Symmetry is obvious and the triangle inequality follows from subadditivity of suprema.
\end{itemize}
What is more, the topology of $d$ coincides with the original topology on $X$: \\
if $d(x_n,x) \to 0$ i.e. $\sup_{f \in E}|f(x_n)-f(x)| \to 0$ then for every $f \in E$ we have $f(x_n) \to f(x)$. Assume that, on the contrary, $x_n \not\to x$---then from (sequential) compactness of $X$ there is $y \in X, y \neq x$ such that $x_{n_k} \to y$ for some subsequence $(x_{n_k})_k$. Therefore we must have $f(x_{n_k}) \to f(x)$ and simultaneously $f(x_{n_k}) \to f(y)$. Since $E$ separates the points of $X$, we must have $x=y$ yielding a contradiction. \par
Conversely, assume $x_n \xrightarrow{\textup{top}X} x$. Then for $f \in E$ we have $f(x_n) \to f(x)$. Compactness of $E$ guarantees the condition of (uniform) equicontinuity, i.e.
$$\forall_{\eps >0} \exists_{\delta >0} \forall_{x,y \in X, f \in E}\big(\rho(x,y)<\delta \Rightarrow |f(x)-f(y)|<\eps\big)$$
where $\rho$ is any metric giving the topology of $X$. Thus let us fix $\eps>0$, choose $\delta>0$ as above and let $n_0 \in \NNN$ be large enough so that $\rho(x_n,x)<\delta$. Then using equicontinuity, for any $f \in E$ we would have $|f(x_n)-f(x)|<\eps$ and hence $d(x_n,x)<\eps$. \par
Now, let $\Aa$ be an arbitrary unital $C^*$-algebra. Then the following holds:
\begin{fact} The following conditions are equivalent:
\begin{itemize}
\item $\Aa$ is separable;
\item There exists a compact set $K$ generating $\Aa$ i.e. satisfying $C^*(K \cup \{1\})=\Aa$
(where $C^*(L)$ is by definition a smallest $C^*$-algebra containing $L$).
\end{itemize}
\end{fact}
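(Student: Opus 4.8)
The plan is to prove both implications directly, using countable dense subsets and the continuity of the $C^*$-algebra operations; no heavy machinery is needed.

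\emph{Compact generating set implies separability.} Suppose $K \subseteq \Aa$ is compact with $C^*(K \cup \{1\}) = \Aa$. A compact metric space is separable, so we may fix a countable dense subset $D \subseteq K$. Let $A_0$ denote the set of all finite linear combinations, with coefficients in $\QQQ + i\QQQ$, of products of finitely many elements of $D \cup D^* \cup \{1\}$; this set is countable, and it is a $(\QQQ+i\QQQ)$-$*$-subalgebra. Its norm-closure $\overline{A_0}$ is then a closed $*$-subalgebra of $\Aa$ containing $D$, and since $K$ is closed in $\Aa$ while $D$ is dense in $K$, the closure of $D$ in $\Aa$ is exactly $K$, so $K \subseteq \overline{A_0}$. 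As $1 \in \overline{A_0}$ too, $\overline{A_0}$ is a $C^*$-subalgebra containing $K \cup \{1\}$, whence $\overline{A_0} \supseteq C^*(K \cup \{1\}) = \Aa$ and so $\overline{A_0} = \Aa$. Thus $\Aa$ is the norm-closure of the countable set $A_0$, i.e. separable.

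\emph{Separability implies a compact generating set.} Pick a sequence $\{a_n\}_{n \in \NNN}$ dense in $\Aa$ and set $b_n := 2^{-n}(1 + \|a_n\|)^{-1} a_n$, so that $\|b_n\| \leq 2^{-n} \to 0$. Then $K := \{0\} \cup \{b_n : n \in \NNN\}$ is compact: in any open cover some member $U$ contains $0$, hence all but finitely many of the $b_n$, and finitely many further members of the cover handle the remaining points. For each $n$ the element $a_n = 2^n(1 + \|a_n\|)\, b_n$ is a complex scalar multiple of $b_n \in C^*(K \cup \{1\})$, so $a_n \in C^*(K \cup \{1\})$. Hence the closed $C^*$-algebra $C^*(K \cup \{1\})$ contains the dense subset $\{a_n\}$ and therefore equals $\Aa$.

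I do not expect any real obstacle. The only points that deserve a word of care are the routine fact that $C^*(S \cup \{1\})$ coincides with the norm-closure of the unital $(\QQQ + i\QQQ)$-$*$-polynomials in $S$ — which is precisely what makes the explicit countable set $A_0$ above dense in $\Aa$ — the elementary observation that a norm-null sequence together with its limit point is norm-compact, and the harmless degenerate cases such as some $a_n$ being $0$ or $\Aa \cong \CCC$.
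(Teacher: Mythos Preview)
Your proof is correct and follows essentially the same approach as the paper's: for the forward direction you both pass to a countable dense subset of $K$ and take rational $*$-polynomials, and for the converse you both rescale a countable dense sequence into a null sequence and observe that a null sequence together with its limit is compact. The only cosmetic difference is that the paper first restricts to a dense sequence in the unit ball and uses the rescaling $a_n/n$, whereas you take a dense sequence in all of $\Aa$ and use $2^{-n}(1+\|a_n\|)^{-1}a_n$; both work for the same reason.
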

\begin{proof} Suppose that $K$ is as above. Then $K$ is separable (as a topological space) hence there exists a countable dense set $\{y_n\}_{n \in \NNN} \subset K$. The smallest unital $C^*$-algebra containing $K$ is the closure (in the norm topology) of the set of all elements of  the form $p(x_1,...,x_k,x_1^*,...,x_k^*)$ where $x_1,...,x_k \in K$, $k \in \NNN$ and $p$ is a polynomial in $2k$ free (noncommuting) variables. Then the set of all elements of the form $q(y_1,...,y_k,y_1^*,...,y_k^*)$ where $k \in \NNN$ and $q$ is a polynomial of $2k$ free variables with (complex) \textit{rational} coefficients is countable and dense in $\Aa$. \par
Conversely, assume that $\Aa$ is separable. Choose a countable dense set $\{a_n\}_{n \in \NNN}$ in the (closed) unit ball in $\Aa$ and put
$$K:=\{0\} \cup \{\frac{a_n}{n}: n \in \NNN \}.$$
Then, since $\{a_n\}_{n \in \NNN}$ is bounded, hence $K$ consists from (one) convergent sequence together with its limit, therefore is compact. Moreover we have $\lin K=\lin \{a_n: n \in \NNN \}$ and this set is dense in the unit ball of $\Aa$ so being a vector space, is dense in whole $\Aa$. Therefore it generates $\Aa$.
\end{proof}

\section{Uniform structure on the set of representations}

\subsection{Metric structure}

Let $\Aa$ be a separable, unital $C^*$-algebra and $K$ be a compact set which generates $\Aa$ (we already know that such $K$ exists). Fix two (unital) *-representations of $\Aa$, $\pi_1,\pi_2:\Aa \to \Bb(\ell^2)$ and put
\begin{equation}\label{metric}
d_K(\pi_1,\pi_2):=\sup_{a \in K} \|\pi_1(a)-\pi_2(a)\|.
\end{equation}
Denote $\Rep(\Aa):=\{\pi:\Aa \to \Bb(\ell^2): \pi \ \textup{is a *-representation}, \ \pi(1)=I\}$. The choice of $\ell^2$ as a representation space is due to the following reasons:
\begin{itemize}
\item since $\Aa$ is separable, then $\Aa$ can be \textit{faithfully} represented on a separable Hilbert space
\item obviously every two infinite dimensional separable Hilbert spaces are unitarly equivalent but $\ell^2$ has the property that for any $n \in \NNN$ we have the natural embedding $\CCC^n \hookrightarrow \ell^2$. In other words, for a finite dimensional representation $\pi$ on $\CCC^n$ we have that $\aleph_0 \odot \pi=\bigoplus_{n \in \NNN}\pi$ is a representation on $\ell^2$.
\end{itemize}
\begin{fact} \label{metryka1}
The formula \eqref{metric} defines a metric on $\Rep(\Aa)$.
\end{fact}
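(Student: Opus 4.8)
The plan is to verify the three metric axioms for $d_K$ directly from the definition \eqref{metric}, the only nontrivial point being the separation property $d_K(\pi_1,\pi_2)=0 \Rightarrow \pi_1=\pi_2$.

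First I would dispose of the routine facts. Finiteness of $d_K(\pi_1,\pi_2)$ follows since for $a \in K$ we have $\|\pi_1(a)-\pi_2(a)\| \leq \|\pi_1(a)\|+\|\pi_2(a)\| \leq 2\|a\|$, and $K$ is bounded (being compact), so the supremum is at most $2\sup_{a \in K}\|a\|<\oo$. Symmetry $d_K(\pi_1,\pi_2)=d_K(\pi_2,\pi_1)$ is immediate from $\|\pi_1(a)-\pi_2(a)\|=\|\pi_2(a)-\pi_1(a)\|$. The triangle inequality follows by inserting a third representation: for each $a \in K$, $\|\pi_1(a)-\pi_3(a)\| \leq \|\pi_1(a)-\pi_2(a)\|+\|\pi_2(a)-\pi_3(a)\| \leq d_K(\pi_1,\pi_2)+d_K(\pi_2,\pi_3)$, and taking the supremum over $a \in K$ gives the claim (subadditivity of suprema, exactly as used in the commutative motivating computation above). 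Finally $d_K(\pi,\pi)=0$ is clear.

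The one step requiring the hypothesis on $K$ is separation. Suppose $d_K(\pi_1,\pi_2)=0$, i.e. $\pi_1(a)=\pi_2(a)$ for every $a \in K$. I want to conclude $\pi_1=\pi_2$ on all of $\Aa$, and here is where the assumption $C^*(K \cup \{1\})=\Aa$ enters. Consider the set $\Ee:=\{a \in \Aa: \pi_1(a)=\pi_2(a)\}$. Since $\pi_1,\pi_2$ are unital $*$-homomorphisms, $\Ee$ is a $*$-subalgebra of $\Aa$ containing $K$ and $1$; moreover it is norm-closed, because if $a_n \to a$ with $a_n \in \Ee$ then $\pi_1(a)=\lim \pi_1(a_n)=\lim\pi_2(a_n)=\pi_2(a)$, using that $*$-homomorphisms between $C^*$-algebras are norm-contractive. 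Hence $\Ee$ is a $C^*$-subalgebra containing $K \cup \{1\}$, so $\Ee \supseteq C^*(K \cup \{1\})=\Aa$, which forces $\Ee=\Aa$ and therefore $\pi_1=\pi_2$. This completes the verification.

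I do not expect any genuine obstacle: the argument is a direct mimic of the commutative case presented in the Motivation section (where $d(x,y)=\sup_{f\in E}|f(x)-f(y)|$ and separation came from $E$ separating points), with ``$E$ separates points'' replaced by ``$K$ generates $\Aa$ as a $C^*$-algebra''. The only thing to be careful about is to phrase the closure argument for $\Ee$ correctly, invoking contractivity of $*$-homomorphisms rather than isometry (the latter holds only for injective ones, which we have not assumed here).
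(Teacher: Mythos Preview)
Your proof is correct and follows essentially the same approach as the paper: finiteness from boundedness of $K$, triangle inequality from subadditivity of suprema, and separation from the fact that $K$ generates $\Aa$ as a $C^*$-algebra. The paper states the separation step more tersely (``$K$ generates $\Aa$ and $\pi_j$ preserve all algebraic operations; therefore $\pi_1(x)=\pi_2(x)$''), whereas you spell out the equalizer argument explicitly, but this is the same idea.
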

\begin{proof}
$d_k$ has finite values: indeed, we have
$$\sup_{a \in K}\|\pi_1(a)-\pi_2(a)\| \leq \sup_{a \in K}\Big(\|\pi_1(a)\|+\|\pi_2(a)\|\Big) \leq 2 \sup_{a \in K}\|a\|<\oo$$
since $K$ is bounded. \\
If $d_K(\pi_1,\pi_2)=0$ then for every $a \in K$ we have $\pi_1(a)=\pi_2(a)$. But $K$ generates $\Aa$ and $\pi_j, \ j=1,2$ preserve all algebraic operations; therefore $\pi_1(x)=\pi_2(x)$ for each $x \in \Aa$ and $\pi_1=\pi_2$. \\
Other conditions from the definition of metric are obvious (for triangle inequality use subadditivity of suprema).
\end{proof}

\begin{rms}
The argument given above shows not only that $d_K$ has finite values but shows also that $(\Rep(\Aa),d_K)$ is bounded and $\diam \big(\Rep(\Aa)\big) \leq 2 \diam(K)$.
Moreover the same argument applies also when $K$ is only bounded, but if we allow all possible bounded sets $K$ then the topologies of metrics $d_K$ may differ. Soon we will see that it is \textit{not} the case for a compact set $K$. \par
On the other hand, if $K$ is bounded but does not generate whole $\Aa$ then $d_K$ will only be a pseudometric.  \\
Directly from the definition we obtain the following properties:
\begin{itemize}
\item $K_1 \subset K_2 \Rightarrow d_{K_1} \leq d_{K_2}$,
\item $d_{K_1 \cup K_2}=\max(d_{K_1},d_{K_2})$,
\item $d_{K_1 \cap K_2}=\min(d_{K_1},d_{K_2})$.
\end{itemize}

\end{rms}
\begin{thm}  $(\Rep(\Aa),d_K)$ is a complete metric space.
\end{thm}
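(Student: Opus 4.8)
The plan is the standard completeness argument, adapted to the present setting: I start with a $d_K$-Cauchy sequence $(\pi_n)_n$ in $\Rep(\Aa)$, manufacture a candidate limit by taking pointwise norm-limits, check that this candidate is a genuine element of $\Rep(\Aa)$, and then verify that the convergence takes place in the metric $d_K$.

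First I would observe that for each $a\in K$ the sequence $(\pi_n(a))_n$ is norm-Cauchy in the Banach space $\Bb(\ell^2)$, since $\|\pi_n(a)-\pi_m(a)\|\le d_K(\pi_n,\pi_m)$; completeness of $\Bb(\ell^2)$ then yields a limit, which I call $\pi(a)$. The next step promotes this to the whole algebra. Since $K$ generates $\Aa$, every element of the unital $*$-subalgebra $\Aa_0$ generated by $K$ is a noncommutative polynomial in finitely many elements of $K$ and their adjoints; using that adjunction and operator multiplication are norm-continuous on bounded sets, together with the fact that each $\pi_n$ is contractive (a $*$-homomorphism of $C^*$-algebras satisfies $\|\pi_n(y)\|\le\|y\|$), one gets that $(\pi_n(x))_n$ converges in norm for every $x\in\Aa_0$, to an operator $\pi_0(x)$. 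Passing to the limit in the algebraic identities satisfied by the $\pi_n$ shows that $\pi_0\colon\Aa_0\to\Bb(\ell^2)$ is a unital $*$-homomorphism, and $\|\pi_0(x)\|=\lim_n\|\pi_n(x)\|\le\|x\|$, so $\pi_0$ is contractive. As $\Aa_0$ is norm-dense in $\Aa=C^*(K\cup\{1\})$, it extends uniquely to a contractive $*$-homomorphism $\pi\colon\Aa\to\Bb(\ell^2)$; moreover $\pi(1)=\lim_n\pi_n(1)=I$, so $\pi\in\Rep(\Aa)$, and on $K$ this $\pi$ coincides with the pointwise limit above.

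Finally I would check that $d_K(\pi_n,\pi)\to0$: given $\eps>0$, choose $N$ with $d_K(\pi_n,\pi_m)\le\eps$ for all $m,n\ge N$; then fixing $n\ge N$ and $a\in K$ and letting $m\to\infty$ gives $\|\pi_n(a)-\pi(a)\|\le\eps$, and taking the supremum over $a\in K$ gives $d_K(\pi_n,\pi)\le\eps$ for $n\ge N$. This is the routine uniform-Cauchy manipulation and presents no real difficulty.

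The only delicate point — the main obstacle — is the middle step: verifying that the pointwise limit on $K$ genuinely extends to an element of $\Rep(\Aa)$. Concretely, one must make sure the limiting map on the generated $*$-subalgebra stays contractive (so that a bounded, hence continuous, extension to all of $\Aa$ exists) and that it still takes values in $\Bb(\ell^2)$ rather than in some larger space. The first follows from the automatic norm-decreasing property of $*$-homomorphisms between $C^*$-algebras, and the second from completeness of $\Bb(\ell^2)$. It is worth noting that compactness of $K$ plays no role here: boundedness of $K$ (needed only so that $d_K$ is finite-valued) together with the generating property is all that the argument uses.
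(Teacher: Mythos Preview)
Your proposal is correct and follows essentially the same route as the paper: both show that a $d_K$-Cauchy sequence is pointwise norm-Cauchy on $K$, propagate this to the generated $*$-subalgebra $\Aa_0$ and then to all of $\Aa$, define $\pi$ as the pointwise limit, and finish by letting $m\to\infty$ in the Cauchy inequality to obtain $d_K(\pi_n,\pi)\to0$. The only cosmetic difference is that the paper shows directly that $(\pi_n(x))_n$ is Cauchy for every $x\in\Aa$ and defines $\pi$ on all of $\Aa$ at once, whereas you first build a contractive $*$-homomorphism on $\Aa_0$ and then extend by density; your closing observation that compactness of $K$ is not used here is also noted in the paper.
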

\begin{proof}
Let $(\pi_n)_{n \in \NNN}$ be a Cauchy sequence in $(\Rep(\Aa),d_K)$. Then $\sup_{a \in K}\|\pi_n(a)-\pi_m(a)\| \to 0$ hence for each $a \in K$ we have $\|\pi_n(a)-\pi_m(a)\| \to 0$. Therefore we obtain Cauchy sequences $(\pi_n(a))_{n \in \NNN}$ for each $a \in K$. Let $\Aa_0$ denotes the *-algebra generated by the set $K \cup \{1\}$. We claim that $(\pi_n(x))_{n \in \NNN}$ is a Cauchy sequence for each $x \in \Aa_0$. Each element $x \in \Aa_0$ is of the form
$$x=p(a_1,...,a_k,a_1^*,...,a_k^*)$$
for some: $k \in \NNN$, $a_1,...,a_k \in K$ and a polynomial in $2k$ free variables $p$.
Then
\begin{gather*} \\
\|\pi_n(x)-\pi_m(x)\|=\|\pi_n\big(p(a_1,...,a_k,a_1^*,...,a_k^*)\big)-\pi_m\big(p(a_1,...,a_k,a_1^*,...,a_k^*)\big)\| \\
=\|p\big(\pi_n(a_1),...,\pi_n(a_k),\pi_n(a_1)^*,...,\pi_n(a_k)^*\big)- \\
-p\big(\pi_m(a_1),...,\pi_m(a_k),\pi_m(a_1)^*,...,\pi_m(a_k)^*\big)\| \to 0
\end{gather*}
since every polynomial is uniformly continuous on compact sets. \\
Now if $x \in \Aa$ is arbitrary then $x=\lim_{k \to \oo}x_k$ for some sequence $(x_k)_k \subset \Aa_0$.
Then choosing sufficiently large $k \in \NNN$ we obtain:
$$\|\pi_n(x)-\pi_m(x)\| \leq \|\pi_n(x)-\pi_n(x_k)\|+\|\pi_n(x_k)-\pi_m(x_k)\|+\|\pi_m(x_k)-\pi_m(x)\| \leq$$
$$2\|x-x_k\|+\|\pi_n(x_k)-\pi_m(x_k)\|$$
which can be made arbitrarily small. Therefore for an \textit{arbitrary} $x \in \Aa$ the sequence $(\pi_n(x))_{n \in \NNN}$ is a Cauchy sequence---hence is convergent. We define
$$\pi(x):=\lim_{n \to \oo} \pi_n(x).$$
In this manner we obtain a *-representation of $\Aa$ (since it is the point-norm limit of *-representations). From the Cauchy condition with respect to $d_K$ we have:
\begin{equation}\label{cauchy}
\forall_{\eps >0} \exists_{N_0 \in \NNN} \forall_{n,m>N_0} \forall_{a \in K} \  \|\pi_n(a)-\pi_m(a)\| < \eps.
\end{equation}
It suffices to let $m$ go to $\oo$ in (\ref{cauchy}) to get
$$\forall_{\eps >0} \exists_{N_0 \in \NNN} \forall_{n>N_0} \ d_K(\pi_n,\pi) < \eps.$$
\end{proof}

\begin{thm} \label{punktnorm} The topology of $d_K$ coincides with the point-norm topology and the compact open topology.
\end{thm}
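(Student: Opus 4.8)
The plan is to compare the three topologies through their convergent nets: a topology is determined by which nets converge to which limits, so it is enough to verify, for an arbitrary net $(\pi_\sigma)$ in $\Rep(\Aa)$ with limit candidate $\pi\in\Rep(\Aa)$, the three implications: (i) if $d_K(\pi_\sigma,\pi)\to 0$ then $\pi_\sigma\to\pi$ in the point-norm topology; (ii) if $\pi_\sigma\to\pi$ in the point-norm topology then $\pi_\sigma\to\pi$ in the compact-open topology; (iii) if $\pi_\sigma\to\pi$ in the compact-open topology then $d_K(\pi_\sigma,\pi)\to 0$. Implication (iii) I would settle immediately by applying the definition of compact-open convergence to the compact set $L=K$; then (i), (ii) and (iii) together force the three topologies to coincide.

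For (i), assume $d_K(\pi_\sigma,\pi)\to 0$ and put $S:=\{x\in\Aa:\|\pi_\sigma(x)-\pi(x)\|\to 0\}$. By hypothesis $K\subseteq S$, and $1\in S$ because $\pi_\sigma(1)=I=\pi(1)$. The crucial point is that $S$ is automatically a closed $*$-subalgebra of $\Aa$: since each $\pi_\sigma$ and $\pi$ is linear, multiplicative, $*$-preserving and \emph{contractive}, the identities $\|uv-u'v'\|\le\|u\|\,\|v-v'\|+\|u-u'\|\,\|v'\|$ and $\|x^*-y^*\|=\|x-y\|$ show that $S$ is a $*$-subalgebra, while the three-term estimate $\|\pi_\sigma(x)-\pi(x)\|\le 2\|x-x_n\|+\|\pi_\sigma(x_n)-\pi(x_n)\|$ for $x_n\to x$ with $x_n\in S$ shows that $S$ is norm-closed. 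Hence $S$ is a closed $*$-subalgebra containing $K\cup\{1\}$, so $S=C^*(K\cup\{1\})=\Aa$; this is precisely point-norm convergence of $(\pi_\sigma)$ to $\pi$.

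For (ii), I would run the Ascoli-Arzela argument. Assume $\pi_\sigma\to\pi$ in the point-norm topology, fix a compact $L\subseteq\Aa$ and $\eps>0$, and cover $L$ by finitely many balls $B(b_1,\eps/3),\dots,B(b_m,\eps/3)$ with centres $b_i\in L$. Using directedness of the index set, choose $\sigma_0$ such that $\|\pi_\sigma(b_i)-\pi(b_i)\|<\eps/3$ for all $i=1,\dots,m$ and all $\sigma\ge\sigma_0$. Then for any $a\in L$, picking $i$ with $\|a-b_i\|<\eps/3$ and using contractivity of $\pi_\sigma$ and $\pi$ gives $\|\pi_\sigma(a)-\pi(a)\|<\eps$, so $\sup_{a\in L}\|\pi_\sigma(a)-\pi(a)\|\le\eps$ for $\sigma\ge\sigma_0$, that is, $\pi_\sigma\to\pi$ in the compact-open topology. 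This is just the familiar phenomenon that an equicontractive (in particular equicontinuous) family which converges pointwise converges uniformly on compacta.

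I expect the one step carrying real content to be (i): the point is that uniform contractivity of $*$-representations forces the set of elements on which a given net of representations converges to be a closed $*$-subalgebra, so that convergence on the compact generating set $K$ automatically spreads to all of $\Aa$; steps (ii) and (iii) are routine. Finally, since the common topology turns out to be the point-norm topology --- which makes no reference to $K$ --- this argument also delivers the assertion from the remark above that the topology of $d_K$ is independent of the choice of compact generating set $K$.
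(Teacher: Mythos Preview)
Your proof is correct and follows essentially the same route as the paper. The paper proves the two equivalences $d_K\Leftrightarrow$ point-norm and point-norm $\Leftrightarrow$ compact-open separately, while you arrange the same three ingredients into a single cycle (i)$\Rightarrow$(ii)$\Rightarrow$(iii); your step (i) packages the paper's explicit ``polynomials in $K$, then density'' argument as the cleaner statement that $S=\{x:\|\pi_\sigma(x)-\pi(x)\|\to 0\}$ is a closed $*$-subalgebra, and your step (ii) is verbatim the paper's finite $\eps/3$-net argument.
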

\begin{proof}
Suppose that $\pi_n \xrightarrow{d_K} \pi$. Then for each $a \in K$ we have $\pi_n(a) \to \pi(a)$. Analogously as above we check that for each $x \in \Aa_0$ we have $\pi_n(x) \to \pi(x)$, where $\Aa_0$ is the *-algebra generated by $K$. Then for each $x \in \Aa$ we have $x=\lim_{k \to \oo}x_k$ for some $(x_k)_k \subset \Aa_0$ hence, as before, we can estimate:
\begin{gather*} \|\pi_n(x)-\pi(x)\| \leq \|\pi_n(x)-\pi_n(x_k)\|+\|\pi_n(x_k)-\pi(x_k)\|+\|\pi(x_k)-\pi(x)\| \leq \\
\leq 2\|x-x_k\|+\|\pi_n(x_k)-\pi(x_k)\| \to 0.
\end{gather*} \par
Conversely, suppose that for every $x \in \Aa$ we have $\pi_{\sigma}(x) \to \pi(x)$. In particular this holds for each $a \in K$. Since $K$ is compact then for fixed $\eps>0$ there exists a finite $\frac{\eps}{3}$-net $\{a_1,...,a_N\} \subset K$.
In other words, for every $a \in K$ there is $i \in \{1,...,N\}$ such that $\|a-a_i\| \leq \frac{\eps}{3}$. Since the set $\{a_1,...,a_N\}$ is finite hence there exists $n_0 \in \NNN$, such that for $n>n_0$ the following is true:
$$\|\pi_n(a_i)-\pi(a_i)\| \leq \frac{\eps}{3}$$
for every $i=1,...,N$. Let us fix $a \in K$ and choose $i_0 \in \{1,...,N\}$ such that $\|a-a_{i_0}\| \leq \frac{\eps}{3}$. We obtain:
\begin{gather*}
 \|\pi_n(a)-\pi(a)\|  \leq \|\pi_n(a)-\pi_n(a_{i_0})\|+\|\pi_n(a_{i_0})-\pi(a_{i_0})\|+\|\pi(a_{i_0})-\pi(a)\|  \\
\leq 2\|a-a_{i_0}\|+\|\pi_n(a_{i_0})-\pi(a_{i_0})\| \leq \eps
\end{gather*}
uniformly with respect to $a$. Thus $d_K(\pi_n,\pi)=\sup_{a \in K}\|\pi_n(a)-\pi(a)\| \to 0$. \par
We have shown that the point-norm convergence is equivalent to convergence in $d_K$---in the proof of the fact that the point-norm convergence implies convergence in the topology of $d_K$ we did not use the fact that $K$ generates $\Aa$: in particular the same argument applies for any compact set $L \subset \Aa$---therefore the point-norm convergence implies convergence in the compact-open topology. The converse implication is always valid: hence all topologies: point-norm, compact-open and the topology of $d_K$ coincide.
\end{proof}

\begin{rem} A priori we do not know whether the point-norm topology is metrisable (in particular, whether every point has a countable neighborhood system), hence we use nets instead of ordinary sequences. The same remark applies to the compact-open topology.
\end{rem}

With every $x \in \Aa$ we can associate $\hat{x}:\Rep(\Aa) \to \Bb(\ell^2)$ defined by:
\begin{equation}
\hat{x}(\pi):=\pi(x).
\end{equation}
Then the value of $d_K(\pi_1,\pi_2)$ can be expressed as:
\begin{equation}
d_K(\pi_1,\pi_2)=\sup_{a \in K}\|\hat{a}(\pi_1)-\hat{a}(\pi_2)\|.
\end{equation}

\begin{thm}
For every $x \in \Aa$ the mapping $\hat{x}$ is uniformly continuous (with respect to $d_K$).
\end{thm}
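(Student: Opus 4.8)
The plan is to exploit the density in $\Aa$ of the $*$-subalgebra $\Aa_0$ generated by $K \cup \{1\}$ — on which $\hat{x}$ is visibly Lipschitz — and then transfer uniform continuity to an arbitrary $x \in \Aa$ by an $\eps/3$-approximation, exactly in the spirit of the completeness argument and the proof of Theorem~\ref{punktnorm}. For $x \in K$ there is nothing to do: $\|\hat{x}(\pi_1) - \hat{x}(\pi_2)\| = \|\pi_1(x) - \pi_2(x)\| \le d_K(\pi_1,\pi_2)$, so $\hat{x}$ is a $1$-Lipschitz map.

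First I would treat $x \in \Aa_0$. Such an $x$ has the form $x = p(a_1,\dots,a_k,a_1^*,\dots,a_k^*)$ with $a_i \in K$ and $p$ a polynomial in $2k$ noncommuting variables. The crucial observation is that every $\pi \in \Rep(\Aa)$ is contractive, so $\|\pi(a_i)\| \le \|a_i\| \le M := \sup_{a \in K}\|a\|$ for all $\pi$ and all $i$; hence all the tuples $\big(\pi(a_1),\dots,\pi(a_k)^*\big)$ lie in one fixed bounded region $\big(\cl{B}(0,M)\big)^{2k} \subset \Bb(\ell^2)^{2k}$. On a bounded set a polynomial map is Lipschitz: expand $p$ into monomials and apply repeatedly the elementary estimate $\|ST - S'T'\| \le \|S-S'\|\,\|T\| + \|S'\|\,\|T-T'\|$; this yields a constant $L_x$ depending only on $p$ and $M$, not on $\pi$. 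Therefore
$$\|\hat{x}(\pi_1) - \hat{x}(\pi_2)\| = \big\|p(\pi_1(a_1),\dots) - p(\pi_2(a_1),\dots)\big\| \le L_x \max_i \|\pi_1(a_i) - \pi_2(a_i)\| \le L_x\, d_K(\pi_1,\pi_2),$$
so $\hat{x}$ is $L_x$-Lipschitz for every $x \in \Aa_0$.

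Next, for arbitrary $x \in \Aa$ I would use density of $\Aa_0$: given $\eps > 0$, choose $x_0 \in \Aa_0$ with $\|x - x_0\| < \eps/3$. Since all representations are contractive, for any $\pi_1,\pi_2 \in \Rep(\Aa)$,
$$\|\hat{x}(\pi_1) - \hat{x}(\pi_2)\| \le \|\pi_1(x-x_0)\| + \|\hat{x}_0(\pi_1) - \hat{x}_0(\pi_2)\| + \|\pi_2(x_0 - x)\| \le \tfrac{2\eps}{3} + L_{x_0}\, d_K(\pi_1,\pi_2),$$
so $d_K(\pi_1,\pi_2) < \delta := \eps/(3L_{x_0})$ (any $\delta$ if $L_{x_0}=0$) forces $\|\hat{x}(\pi_1) - \hat{x}(\pi_2)\| < \eps$; this is precisely uniform continuity. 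In fact the same computation gives more: the function $\w_x(t) := \inf_{x_0 \in \Aa_0}\big(2\|x - x_0\| + L_{x_0}\, t\big)$ is an infimum of affine functions, hence concave and nondecreasing, it vanishes at $0$ and tends to $0$ as $t \to 0^+$ by density, and it is bounded by $2\|x\|$ (take $x_0 = 0$); so by Corollary~\ref{AP'} it lies in $\Omega$, i.e. $\hat{x}$ even admits a concave modulus of continuity.

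The only genuinely delicate point is the uniformity of the Lipschitz constant $L_x$ over the whole of $\Rep(\Aa)$ in the $\Aa_0$-step; this is exactly what contractivity of $*$-homomorphisms delivers, since it confines all the operator tuples $\big(\pi(a_1),\dots,\pi(a_k)^*\big)$ to one common bounded subset of $\Bb(\ell^2)^{2k}$, where the polynomial $p$ has a finite Lipschitz constant. Everything else is the routine $\eps/3$-bookkeeping already used above.
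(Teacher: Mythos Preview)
Your proof is correct and follows essentially the same route as the paper's: show $\hat{x}$ is Lipschitz on $\Aa_0$ (you package this as ``polynomial maps are Lipschitz on bounded sets'', the paper breaks it into the three closure properties under linear combinations, $*$, and products---same content), then pass to general $x$ by density and the $\eps/3$ estimate. Your extra remark producing an explicit concave modulus $\w_x(t)=\inf_{x_0\in\Aa_0}\big(2\|x-x_0\|+L_{x_0}t\big)$ is a nice bonus the paper does not make here; note that you do not actually need Corollary~\ref{AP'} for it, since an infimum of nondecreasing affine functions is already concave, nondecreasing, and your density argument gives continuity at $0$ directly.
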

\begin{proof}
First pick $a \in K$. Then:
$$\|\hat{a}(\pi_1)-\hat{a}(\pi_2)\|=\|\pi_1(a)-\pi_2(a)\| \leq \sup_{a \in K}\|\pi_1(a)-\pi_2(a)\|=d_K(\pi_1,\pi_2),$$
thus $\hat{a}$ is contractive (Lipschitz with Lipschitz constant $1$). We have the following facts:
\begin{itemize}
\item linear combination of Lipschitz functions is Lipschitz,
\item if $\hat{a}$ is Lipschitz then $\widehat{a^*}$ also (with the same Lipschitz constant) since:
\begin{gather*}\|\widehat{a^*}(\pi_1)-\widehat{a^*}(\pi_2)\|=\|\pi_1(a^*)-\pi_2(a^*)\|=\|\big(\pi_1(a)-\pi_2(a)\big)^*\|= \\
=\|\pi_1(a)-\pi_2(a)\|=\|\hat{a}(\pi_1)-\hat{a}(\pi_2)\|,
\end{gather*}
\item if $\hat{a},\hat{b}$ are Lipschitz then $\widehat{ab}$ also. 
\end{itemize}
It follows that for $x \in \Aa_0$, where $\Aa_0$ is the *-algebra generated by $K \cup \{1\}$, the functions $\hat{x}$ are Lipschitz, therefore uniformly continuous. Let us now take any $x \in \Aa$ and express it as $x=\lim_{k \to \oo}x_k$ where $x_k \in \Aa_0$. Fix $\eps>0$, choose $k \in \NNN$ large enough, so that $\|x-x_k\|<\frac{\eps}{3}$ and put $\delta:=\frac{\eps}{3L_k}$, where $L_k$ is Lipschitz constant for $\widehat{x_k}$. If $d_K(\pi_1,\pi_2)<\delta$, we can estimate:
\begin{gather*}
\|\hat{x}(\pi_1)-\hat{x}(\pi_2)\|=\|\pi_1(x)-\pi_2(x)\| \leq \\
\leq \|\pi_1(x)-\pi_1(x_k)\|+\|\pi_1(x_k)-\pi_2(x_k)\|+\|\pi_2(x_k)-\pi_2(x)\| \leq \\
\leq 2\|x-x_k\|+\|\pi_1(x_k)-\pi_2(x_k)\|= =2\|x-x_k\|+\|\hat{x}(\pi_1)-\hat{x}(\pi_2)\| \leq \\
\leq \frac{2\eps}{3}+L_kd_K(\pi_1,\pi_2)<\eps.
\end{gather*}
\end{proof}

The fact that $\pi$ is a *-representation (i.e. a *-homomorphism), may be expressed in terms of the mapping $x \mapsto \hat{x}$: if we define, on the set of all continuous functions defined on $\Rep(\Aa)$, the *-algebra structure in a natural manner (where all operations are defined pointwise), then the mapping $x \mapsto \hat{x}$ will become a *-homomorphism. However, it is not possible to define a norm on the set of all continuous functions on $\Rep(\Aa)$ in a natural manner to obtain $C^*$-algebra structure:
\begin{exm}
The space $\Rep(\Aa)$ is almost always nonseparable: for example let us consider $\Aa:=\CCC \oplus \CCC$
and fix a projection $P \in \Bb(\ell^2)$. Define $\pi_P$ as follows:
$$\begin{cases}
\pi_P(1,0)=P \\
\pi_P(0,1)=I-P
\end{cases}$$
and extend it  linearly on all $\Aa$. As $P$ is a projection, we infer that $\pi_P$ is indeed a *-representation. Then for two distinct projections $P$ and $Q$ we obtain:
$$1 \leq \|P-Q\|=\|\pi_P(1,0)-\pi_Q(1,0)\| \leq d_K(\pi_P,\pi_Q)$$
for every compact generating set $K$ containing $(1,0)$. Since within $\Bb(\ell^2)$ one can find uncountably many pairwise distinct (orthogonal) projections we see that $\Rep(\Aa)$ cannot be separable. It also follows that $\Rep(\Aa)$ cannot be compact, since compact metric space is always separable. In particular continuous functions on $\Rep(\Aa)$ need not to be bounded and we cannot consider the supremum norm on $C(\Rep(\Aa))$.
\end{exm}
\begin{rem} One can define a weaker topology on $\Rep(\Aa)$ in the following way: we declare that a net $\{\pi_s\}_{s \in S}$ converges to $\pi$ if for any $a \in \Aa$ and every $\xi,\eta \in \ell^2$, $\langle \pi_s(a)\xi,\eta \rangle \to \langle \pi(a)\xi,\eta \rangle$.
In other words, this is the topology defined by the family of maps 
$\{p_{a,\xi,\eta}: a \in \Aa, \xi,\eta \in \ell^2\}$ where $p_{a,\xi,\eta}(\pi):=|\langle \pi(a)\xi,\eta \rangle|$. Then $\Rep(\Aa)$ with this topology becomes separable or even more---polish space, see Chapter 4 in ~\cite{Arv} (where the space $\Rep(\Aa)$ is defined in a slightly different manner).
\end{rem}

Now suppose that we have two distinct compact, generating sets $K,L \subset \Aa$. Both topologies: of the metric $d_K$ and of the metric $d_L$ coincide with the point-norm topology. It means that those two metrics $d_K, d_L$ are equivalent. But as we have already seen, the space $\Rep(\Aa)$ is rarely compact, therefore we cannot argue that these two metrics are \textit{uniformly} equivalent (i.e. the identity mapping is uniformly continuous in both directions). But it appears that such equivalence is still valid:
\begin{thm} \label{rownowazne} Let $K,L$ be as above. Then the metrics $d_K$ and $d_L$ are uniformly equivalent.
\end{thm}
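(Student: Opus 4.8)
The plan is to show directly that the identity map $(\Rep(\Aa),d_K)\to(\Rep(\Aa),d_L)$ is uniformly continuous; since $K$ and $L$ play symmetric roles, applying the same argument with $K$ and $L$ interchanged then yields uniform equivalence. So I would fix $\eps>0$ and look for $\delta>0$ such that $d_K(\pi_1,\pi_2)<\delta$ implies $d_L(\pi_1,\pi_2)<\eps$ for all $\pi_1,\pi_2\in\Rep(\Aa)$.

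The first step is to use compactness of $L$ to reduce to finitely many elements: choose a finite $\tfrac{\eps}{5}$-net $b_1,\dots,b_N\in L$. Since $K$ generates $\Aa$, the $*$-algebra $\Aa_0$ generated by $K\cup\{1\}$ is norm-dense in $\Aa$, and in particular dense enough to approximate each $b_j$; so I would pick $y_j\in\Aa_0$ with $\|b_j-y_j\|<\tfrac{\eps}{5}$. Each $y_j$ is a noncommutative polynomial in finitely many elements of $K$ and their adjoints, so by the theorem asserting uniform continuity of the maps $\hat{x}$ (where, for $x\in\Aa_0$, $\hat x$ is in fact Lipschitz for $d_K$), the function $\widehat{y_j}$ has a finite Lipschitz constant $L_j$ with respect to $d_K$. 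Put $M:=\max_{1\le j\le N}L_j$; if $M=0$ the claim is trivial, otherwise set $\delta:=\tfrac{\eps}{5M}$.

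Now given $\pi_1,\pi_2$ with $d_K(\pi_1,\pi_2)<\delta$ and an arbitrary $b\in L$, choose $j$ with $\|b-b_j\|<\tfrac{\eps}{5}$. Using that $*$-homomorphisms are contractive together with $\|\widehat{y_j}(\pi_1)-\widehat{y_j}(\pi_2)\|\le L_j\,d_K(\pi_1,\pi_2)\le M\delta=\tfrac{\eps}{5}$, one estimates
\[
\|\pi_1(b)-\pi_2(b)\|\le\|\pi_1(b)-\pi_1(b_j)\|+\|\pi_1(b_j)-\pi_1(y_j)\|+\|\pi_1(y_j)-\pi_2(y_j)\|+\|\pi_2(y_j)-\pi_2(b_j)\|+\|\pi_2(b_j)-\pi_2(b)\|<\eps .
\]
Taking the supremum over $b\in L$ gives $d_L(\pi_1,\pi_2)\le\eps$, which is what was needed; the symmetric argument supplies the reverse estimate with a possibly different $\delta$.

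The one genuinely non-routine point to highlight is the passage from the pointwise statement ``every element of $L$ lies in the norm-closure of the polynomial algebra generated by $K$'' to a modulus of continuity that is \emph{uniform} over all of $L$. This is exactly where compactness of $L$ enters, through the finite $\tfrac{\eps}{5}$-net: only finitely many approximating polynomials $y_1,\dots,y_N$ occur, so their Lipschitz constants have a finite maximum $M$. It is also precisely this step that would fail for a merely bounded (non-compact) generating set, consistent with the remark preceding the theorem that different bounded generating sets can give non-uniformly-equivalent metrics.
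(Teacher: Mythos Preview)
Your proof is correct and follows essentially the same approach as the paper's: both use a finite $\eps$-net in $L$, approximate the net points by elements of the $*$-algebra generated by $K$, and then exploit that these finitely many approximants have uniformly continuous (indeed Lipschitz) $\hat{x}$-maps with respect to $d_K$. The only cosmetic difference is that the paper combines your five-term estimate into three terms (using $\|a'-x_i\|\le\|a'-a_i'\|+\|a_i'-x_i\|$) and invokes uniform continuity rather than writing down the Lipschitz constant explicitly.
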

\begin{proof}
It suffices to prove that for every $\eps>0$ there exists $\delta>0$ such that for any $\pi, \pi' \in \Rep(\Aa)$ the condition $d_K(\pi,\pi')<\delta$ implies that $d_L(\pi,\pi')<\eps$ (the roles of $d_K$ and $d_L$ are symmetric). \\
So let us fix $\eps>0$ and consider $\frac{\eps}{6}$-net $\{a_1',...,a_N'\}$ for $L$. In other words the set $\{a_1',...,a_n'\}$ satisfies that $\forall_{a' \in L}\exists_{i \in \{1,...,N\}} \|a'-a'_i\|<\frac{\eps}{6}$. As $K$ generates $\Aa$ thus for every $a_i, \ i=1,...,N$, we can choose $x_i$ from the *-algebra generated by $K$ such that $\|a_i-x_i\|<\frac{\eps}{6}$. Then
$$\forall_{a' \in L}\exists_{i \in \{1,...,N\}} \|a'-x_i\| \leq \|a'-a_i\|+\|a_i'-x_i\|<\frac{\eps}{3}.$$
Therefore for any $a' \in L$ we can estimate:
\begin{gather*}
\|\pi(a')-\pi'(a')\| \leq \|\pi(a')-\pi(x_i)\| +\|\pi(x_i)-\pi'(x_i)\| +\|\pi'(x_i)-\pi'(a')\| \leq \\
\leq 2\|a'-x_i\|+\|\pi(x_i)-\pi'(x_i)\|<\frac{2\eps}{3}+\|\pi(x_i)-\pi'(x_i)\|.
\end{gather*}
It now suffices to take $\delta$ small enough so that:
$$d_K(\pi,\pi')<\delta \Rightarrow \|\pi(x_i)-\pi'(x_i)\| < \frac{\eps}{3}.$$
Such choice is possible since there are only finitely many $x_i$ and every function $\widehat{x_i}$ is uniformly continuous (with respect to $d_K$).
\end{proof}

\subsection{Moduli of continuity}

\begin{df} Fix a compact set $K$ generating $\Aa$. A set $L \subset \Aa$ is called \textit{equicontinuous} if the following condition is satisfied: for every $\eps>0$ there exists $\delta>0$ such that for any $\pi,\pi' \in \Rep(\Aa)$ satisfying $d_K(\pi,\pi')<\delta$ we have $\|\pi(x)-\pi'(x)\|<\eps$ for every $x \in L$.
\end{df}

Note that if $K_1,K_2$ are two compact generating sets and $L \subset \Aa$ is equicontinuous with respect to $d_{K_1}$ then it is also equicontinuous with respect to $d_{K_2}$: indeed, fix $\eps>0$ and choose $\delta_1>0$ which is ,,good'' for equicontinuity of $L$ with respect to $d_{K_1}$. Since the metrics $d_{K_1}$ i $d_{K_2}$ are uniformly equivalent, there exists $\delta_2>0$ such that for any two representations $\pi,\pi'$ we have the following implication: $(d_{K_2}(\pi,\pi')<\delta_2 \Rightarrow d_{K_1}(\pi,\pi')<\delta_1)$. Then $\delta_2$ is ,good'' for equicontinuity of $L$ with respect to $d_{K_2}$. In other words, we have shown that the equicontinuity does not depend from the choice of the compact generating set $K$. \par
Let us fix an equicontinuous and bounded set $L \subset \Aa$ . Define the following function:

$$f^K_L(t):=\sup\{ \|\hat{a}(\pi)-\hat{a}(\pi')\|: \pi,\pi' \in \Rep(\Aa), d_K(\pi,\pi') \leq t, a \in L\},$$ $$ f^K_L:[0,\oo) \to [0,\oo].$$

As a direct consequence of the definition we have:
\begin{itemize}
\item if $L_1 \subset L_2$ then $f^K_{L_1} \leq f^K_{L_2}$,
\item if $K_1 \subset K_2$ then $f^{K_1}_L \geq f^{K_2}_L$.
\end{itemize}

We will show that the function $f^K_L$ satisfies all the conditions from Corollary ~\ref{AP'}. \\
First, for every $a \in L$ we have
$$\|\hat{a}(\pi)-\hat{a}(\pi')\| \leq f^K_L\big(d_K(\pi,\pi')\big).$$
Since the set $L$ is equicontinuous the following holds:
\begin{equation}
\forall_{\eps>0}\exists_{\delta>0}\forall_{\pi,\pi' \in \Rep(\Aa)} \forall_{a \in L} \big(d_K(\pi,\pi')<\delta \Rightarrow \|\hat{a}(\pi)-\hat{a}(\pi')\|<\eps \big),
\end{equation}
so if $d_K(\pi,\pi') \to 0$ then $\|\hat{a}(\pi)-\hat{a}(\pi')\| \to 0$ uniformly with respect to $a \in L$. It shows that $\lim_{t \to 0^+}f^K_L(t)=0$. \\
Further, since $L$ is bounded, we infer 
$$\|\hat{a}(\pi)-\hat{a}(\pi')\|=\|\pi(a)-\pi'(a)\| \leq 2\|a\| \leq 2\diam L$$
independently from the choice of $a \in L$ and of $\pi,\pi'$ hence $f^K_L$ is bounded (by $2 \diam L$). \\
Moreover $f^K_L$ is nondecreasing, since if $t_1<t_2$ then in the definition of $f^K_L(t_2)$ we just take supremum over larger set.
We showed that $f^K_L$ satisfies all conditions from Corollary ~\ref{AP'}. Therefore there exists concave, continuous, nondecreasing function $\tilde{\omega}^K_L$ satisfying $\tilde{\omega}^K_L(0)=0$ and $f^K_L \leq \tilde{\omega}^K_L$. We put
$\omega^K_L(t):=\inf\{\w(t): \w \in \Omega, \w \geq f^K_L\}$ (this definition is correct since the set over which we take infimum is nonempty, as we have shown).
In particular for a set consisting from one point $L=\{a\}$ we use the notation $f^K_a$, $\tilde{\omega}^K_a$ and $\w^K_a$.
Let $K,K'$ be two compact generating sets. We already know that $d_{K_1}$ and $d_{K_2}$ are uniformly equivalent. However the following, stronger result is valid:
\begin{thm} There exists concave, continuous, nondecreasing function $\w$ satisfying $\w(0)=0$ and
$$d_{K'} \leq \w \circ d_K.$$
\end{thm}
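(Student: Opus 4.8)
The plan is to construct the \emph{best possible} comparison function explicitly and then dominate it by a concave one using Corollary \ref{AP'}. I would set
$$\omega^*(t):=\sup\bigl\{d_{K'}(\pi,\pi'): \pi,\pi' \in \Rep(\Aa),\ d_K(\pi,\pi') \leq t\bigr\}, \qquad t \in [0,\oo).$$
By the remarks following Fact \ref{metryka1} we have $d_{K'}(\pi,\pi') \leq 2\diam(K')<\oo$ for all $\pi,\pi'$, so $\omega^*$ takes finite values and is itself bounded (by $2\diam(K')$); it is visibly nondecreasing, since increasing $t$ only enlarges the set of admissible pairs. This is the natural candidate, and by its very definition $d_{K'}(\pi,\pi') \leq \omega^*\bigl(d_K(\pi,\pi')\bigr)$ for every pair $\pi,\pi'$, because the pair $(\pi,\pi')$ is admissible at $t=d_K(\pi,\pi')$.

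Next I would verify the hypotheses of Corollary \ref{AP'} for $\omega^*$. First, $\omega^*(0)=0$: the constraint $d_K(\pi,\pi')\leq 0$ forces $d_K(\pi,\pi')=0$, hence $\pi=\pi'$ (as $K$ generates $\Aa$, cf.\ Fact \ref{metryka1}), and therefore $d_{K'}(\pi,\pi')=0$. Second, $\lim_{t\to 0^+}\omega^*(t)=0$, and this is where the only genuine input enters: by Theorem \ref{rownowazne} the metrics $d_K$ and $d_{K'}$ are uniformly equivalent, so for each $\eps>0$ there is $\delta>0$ with $d_K(\pi,\pi')<\delta \Rightarrow d_{K'}(\pi,\pi')<\eps$ for all $\pi,\pi'$; hence $t<\delta$ implies $\omega^*(t)\leq\eps$. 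Thus $\omega^*$ is nondecreasing, bounded, and $\lim_{t\to 0^+}\omega^*(t)=0=\omega^*(0)$, and Corollary \ref{AP'} furnishes $\w\in\Omega$, i.e.\ a continuous, concave, nondecreasing $\w$ with $\w(0)=0$, satisfying $\omega^*\leq\w$.

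Combining the two estimates gives, for every $\pi,\pi'\in\Rep(\Aa)$,
$$d_{K'}(\pi,\pi') \leq \omega^*\bigl(d_K(\pi,\pi')\bigr) \leq \w\bigl(d_K(\pi,\pi')\bigr),$$
that is $d_{K'} \leq \w\circ d_K$ with $\w$ of the required form, which is the claim. I do not expect a real obstacle: the statement is essentially a repackaging of Theorem \ref{rownowazne} and Corollary \ref{AP'}. The one point worth a moment's care is that $\omega^*$ is \emph{bounded} — which is exactly what makes Corollary \ref{AP'} (rather than the full Aronszajn--Panitchpakdi dichotomy with its growth condition at infinity) available — and this boundedness is precisely the consequence of compactness of $K'$ recorded in the remark $\diam\bigl(\Rep(\Aa),d_{K'}\bigr)\leq 2\diam(K')$.
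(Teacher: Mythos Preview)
Your proof is correct and essentially identical to the paper's own argument: your function $\omega^*$ is precisely the paper's $f^K_{K'}$ (just rewrite $\sup_{a\in K'}\|\pi(a)-\pi'(a)\|=d_{K'}(\pi,\pi')$ and swap the two suprema), and the paper likewise applies Corollary~\ref{AP'} to this function to obtain $\omega=\omega^K_{K'}\in\Omega$. The only cosmetic difference is that the paper packages the verification of the hypotheses of Corollary~\ref{AP'} (boundedness, monotonicity, $\lim_{t\to 0^+}f^K_L(t)=0$) into the general discussion of $f^K_L$ for equicontinuous bounded $L$ preceding the theorem, whereas you redo this verification by hand for $L=K'$ and cite Theorem~\ref{rownowazne} explicitly for the limit at $0$---which is exactly what underlies the equicontinuity of $K'$ used implicitly in the paper.
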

\begin{proof}
Define $\w=\w^K_{K'} \in \Omega$: then $f^K_{K'} \leq \w$. In particular, for every pair of representations $\pi,\pi' \in \Rep(\Aa)$ we have
$$f^K_{K'}\big(d_K(\pi,\pi')\big) \leq \w\big(d_K(\pi,\pi')\big).$$
However
\begin{gather*}
f^K_{K'}\big(d_K(\pi,\pi')\big)=\sup\{\|\underbrace{\hat{a}(\pi_1)-\hat{a}(\pi_2)}_{=\pi_1(a)-\pi_2(a)}\|: d_K(\pi_1,\pi_2) \leq d_K(\pi,\pi'), a \in K'\} \geq \\ \geq \sup\{\|\hat{a}(\pi)-\hat{a}(\pi')\|: a \in K'\}=d_{K'}(\pi,\pi'),
\end{gather*}
therefore $d_{K'} \leq w \circ d_K.$
\end{proof}

\begin{thm} \label{omegi} Using the above notation we have $\w^{K'}_a \leq \w^K_a \circ \w^{K'}_K$.
\end{thm}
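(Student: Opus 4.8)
The plan is to produce one element of $\Omega$ that dominates $f^{K'}_a$ and happens to equal $\omega^K_a \circ \omega^{K'}_K$; since $\omega^{K'}_a$ is by definition the least element of $\Omega$ majorizing $f^{K'}_a$, the asserted inequality follows at once. The first step I would carry out is to verify that $\omega^K_a \circ \omega^{K'}_K$ really lies in $\Omega$. Continuity, monotonicity and vanishing at $0$ are immediate from the corresponding properties of the two factors; the only point that deserves a line is concavity of a composition. If $g,h \in \Omega$, then for $t \in [0,1]$ and $x,y \ge 0$ concavity of $h$ gives $h(tx+(1-t)y) \ge th(x)+(1-t)h(y)$, and since $g$ is nondecreasing and concave, $g(h(tx+(1-t)y)) \ge g(th(x)+(1-t)h(y)) \ge tg(h(x))+(1-t)g(h(y))$, so $g \circ h \in \Omega$; in particular $\omega^K_a \circ \omega^{K'}_K \in \Omega$.

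The heart of the argument is the pointwise estimate $f^{K'}_a \le \omega^K_a \circ \omega^{K'}_K$ on $[0,\oo)$. I would fix $t \ge 0$ and pick arbitrary $\pi,\pi' \in \Rep(\Aa)$ with $d_{K'}(\pi,\pi') \le t$. Directly from the definition of $f^{K'}_K$ (using that $(\pi,\pi',b)$ is an admissible triple for each $b \in K$) one has $d_K(\pi,\pi') = \sup_{b \in K}\|\pi(b)-\pi'(b)\| \le f^{K'}_K\big(d_{K'}(\pi,\pi')\big) \le \omega^{K'}_K(t)$, the last inequality because $f^{K'}_K \le \omega^{K'}_K$ and $\omega^{K'}_K$ is nondecreasing; equivalently, this is the preceding theorem applied with $K$ and $K'$ interchanged. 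Since $f^K_a$ is nondecreasing and $f^K_a \le \omega^K_a$,
$$\|\hat a(\pi)-\hat a(\pi')\| \le f^K_a\big(d_K(\pi,\pi')\big) \le f^K_a\big(\omega^{K'}_K(t)\big) \le \omega^K_a\big(\omega^{K'}_K(t)\big).$$
Taking the supremum over all such $\pi,\pi'$ yields $f^{K'}_a(t) \le \omega^K_a(\omega^{K'}_K(t))$.

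Combining the two steps, $\omega^K_a \circ \omega^{K'}_K$ is an element of $\Omega$ lying above $f^{K'}_a$, so $\omega^{K'}_a = \inf\{\omega \in \Omega : \omega \ge f^{K'}_a\} \le \omega^K_a \circ \omega^{K'}_K$, which is the claim. I do not anticipate a genuine obstacle here: the proof is essentially bookkeeping, and the only thing to watch is the direction of the various monotonicity chains and the correct pairing of indices — in particular that the function linking the two metrics enters on the \emph{inside} of the composition, converting a $d_{K'}$-distance into a $d_K$-distance before $\omega^K_a$ is applied.
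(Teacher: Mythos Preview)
Your proof is correct and follows essentially the same approach as the paper: verify that $\omega^K_a \circ \omega^{K'}_K \in \Omega$, then show it dominates $f^{K'}_a$ by bounding $d_K(\pi,\pi')$ in terms of $d_{K'}(\pi,\pi')$ via $f^{K'}_K$ and feeding this into $f^K_a$. The paper isolates the slightly sharper intermediate inequality $f^{K'}_a \leq f^K_a \circ f^{K'}_K$ before passing to the $\omega$'s, whereas you weaken to the $\omega$'s step by step, but the underlying argument is identical.
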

\begin{proof}
It suffices to show that the function $\w^K_a \circ \w^{K'}_K$ belongs to $\Omega$ and dominates $f^{K'}_a$. The composition of continuous/concave/nondecreasing functions has the same property and $\w^K_a \circ \w^{K'}_K(0)=\w^K_a(0)=0$. So it remains to show the appropriate inequality. We claim that:
\begin{equation}\label{aux}
f^{K'}_a \leq f^K_a \circ f^{K'}_K
\end{equation}
which is enough to end the proof, since we would get $f^{K'}_a \leq \w^K_a \circ \w^{K'}_K$ and the theorem follows by taking supremum on the left hand side. \\
For the proof of (\ref{aux}) we rewrite both sides as follows:
\begin{gather*}
L(t)=\sup\{\|\hat{a}(\tau)-\hat{a}(\tau')\|: d_{K'}(\tau,\tau') \leq t\} \\
R(t)=f^K_a\bigg(\sup\{\|\hat{b}(\pi)-\hat{b}(\pi')\|: b \in K, d_{K'}(\pi,\pi') \leq t \bigg)= \\
\sup\bigg\{\|\hat{a}(\tau)-\hat{a}(\tau')\|:d_K(\tau,\tau') \leq \sup\{|\hat{b}(\pi)-\hat{b}(\pi')\|: b \in K, d_{K'}(\pi,\pi') \leq t\}\bigg\}
\end{gather*}
The set over which we take supremum on the left hand side is contained in the set over which we take supremum on the right hand side: indeed, suppose that $\tau,\tau'$ satisfy
$d_{K'}(\tau,\tau') \leq t$. Then
$$d_K(\tau,\tau')=\sup\{\|\hat{b}(\tau)-\hat{b}(\tau')\|: b \in K\} \leq$$  $$\leq \sup\{\|\hat{b}(\pi)-\hat{b}(\pi')\|: b \in K, d_{K'}(\pi,\pi') \leq t\}.$$
\end{proof}
\begin{rem}
The above argument is valid if we replace $\{a\}$ by any equicontinuous, bounded set $L$. In other words, the following holds:
\begin{equation}\label{cykl}
\w_L^{K'} \leq w_L^K \circ w_K^{K'}.
\end{equation}
\end{rem}

\begin{prp} If $K$ is a compact set generating $\Aa$ and $a,b \in \Aa, \lambda \in \CCC$ then:
\begin{enumerate}
\item $\w^K_{a+\lambda 1}=\w_a^K=\w_{a^*}^K$,
\item $\w^K_a=0 \iff a \in \CCC 1$,
\item $\w^K_{\lambda a}=|\lambda|\w^K_a$,
\item $\w^K_{a+b} \leq \w^K_a+\w^K_b$,
\item $\w^K_{ab} \leq \|a\|\w^K_b+\|b\|\w^K_a$.
\end{enumerate}
\end{prp}
\begin{proof}
Ad. 1. For any $\pi,\pi'$ we have
\begin{gather*}
\|\pi(a^*)-\pi'(a^*)\|=\|\big(\pi(a)-\pi'(a)\big)^*\|=\|\pi(a)-\pi'(a)\|, \\
\|\pi(a+\lambda 1)-\pi'(a+\lambda 1)\|=\|\pi(a)+\lambda I-\pi'(a)-\lambda I\|=\|\pi(a)-\pi'(a)\|,
\end{gather*}
which implies that $f^K_{a+\lambda 1}=f^K_{a^*}=f^K_a$ giving (1). \par
Ad. 2. If $a=\lambda 1$ then obviously $\w^K_a=0$. \\
Conversely, assume $\w^K_a=0$: then for any $\pi,\pi' \in \Rep(\Aa)$ we have
$\pi(a)=\pi'(a)$. Fix $\pi$ and put $\pi'(\cdot):=U^*\pi(\cdot)U$ where $U \in \Bb(\ell^2)$ is a unitary. In this way we obtain $\pi(a)=U^*\pi(a)U$ and $U\pi(a)=\pi(a)U$. Since every operator in $\Bb(\ell^2)$ is a linear combination of (at most) four unitaries (see. e.g. \cite{Sak}) then $\pi(a) \in \Zz\big(\Bb(\ell^2)\big)=\CCC I$. It follows that for any representation $\pi \in \Rep(\Aa)$ there exists a number $\lambda_{\pi} \in \CCC$ such that $\pi(a)=\lambda_{\pi}I$. Therefore, for a \textit{faithful} representation $\pi_0 \in \Rep(\Aa)$ we have
$$\pi_0(a)=\lambda_{\pi_0}I=\pi_0(\lambda_{\pi_0}1),$$
and thus $a=\lambda_{\pi_0}1$ (and in fact the constant $\lambda_{\pi}$ does not depend from the choice of representation). \par
Ad. 3. Since
\begin{gather*}
\sup\{\|\pi(\lambda a)-\pi'(\lambda a)\|: d_K(\pi,\pi') \leq t\}=
\sup\{|\lambda| \|\pi(a)-\pi'(a)\|: d_K(\pi,\pi') \leq t\}= \\
=|\lambda| \sup\{\|\pi(a)-\pi'(a)\|: d_K(\pi,\pi') \leq t\},
\end{gather*}
hence $f^K_{\lambda a}=|\lambda|f^K_a$, which gives (3). \par
Ad. 4. We have
\begin{gather*}
\sup\{\|\pi(a+b)-\pi'(a+b)\|: d_K(\pi,\pi') \leq t\} \leq \\
\leq  \sup\{\|\pi(a)-\pi'(a)\|+\|\pi(b)-\pi'(b)\|: d_K(\pi,\pi) \leq t\} \leq \\
\leq \sup\{\|\pi(a)-\pi'(a)\|: d_K(\pi,\pi') \leq t \}+\sup\{\|\pi(b)-\pi'(b)\|: d_K(\pi,\pi') \leq t \},
\end{gather*}
therefore $f^K_{a+b} \leq f^K_a+f^K_b \leq \w^K_a+\w^K_b$. Since the sum of two nondecreasing/continuous/concave and vanishing at $0$ functions also has this property then $\w^K_{a+b} \leq \w^K_a+\w^K_b$. \par
Ad. 5. We have
\begin{gather*}
\|\pi(ab)-\pi'(ab)\|=\|\pi(a)\pi(b)-\pi'(a)\pi'(b)\| \leq \\
\leq \|\pi(a)\pi(b)-\pi(a)\pi'(b)\|+
\|\pi(a)\pi'(b)-\pi'(a)\pi'(b)\| \leq \\
\leq \|a\| \|\pi(b)-\pi'(b)\|+\|b\|\|\pi(a)-\pi'(a)\|,
\end{gather*}
thus $f^K_{ab} \leq \|a\|f^K_b+\|b\|f^K_a \leq \|a\|\w^K_b+\|b\|\w^K_a$ and as before,
$\|a\|\w^K_b+\|b\|\w^K_a \in \Omega$ which ends the proof.
\end{proof}

The modulus of continuity $\w_a^K$ constructed above can be identified with the minimal modulus of continuity for the function $\hat{a}$:
\begin{thm}
For $a \in \Aa$, $\w_a^K=\w^K_{\hat{a}}$.
\end{thm}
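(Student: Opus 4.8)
The plan is to reduce the identity to a single observation: for $\w \in \Omega$, the function $\w$ is a modulus of continuity for $\hat a \colon (\Rep(\Aa),d_K) \to (\Bb(\ell^2),\|\cdot\|)$ if and only if $\w \geq f^K_a$ pointwise on $[0,\oo)$. Once this is established, the family of concave moduli of continuity of $\hat a$ coincides with $\{\w \in \Omega : \w \geq f^K_a\}$, so the two infima defining $\w^K_{\hat a}$ and $\w^K_a$ are taken over exactly the same set and therefore agree; moreover the common infimum lies in $\Omega$, which is precisely the correctness statement for the definition of $\w^K_a$ recorded above, so $\w^K_a$ and $\w^K_{\hat a}$ are literally the same element of $\Omega$.

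For the first implication I would argue directly from the definition of $f^K_a$. Given $\w \in \Omega$ with $\w \geq f^K_a$ and arbitrary $\pi, \pi' \in \Rep(\Aa)$, set $t := d_K(\pi,\pi')$; then by definition $\|\hat a(\pi) - \hat a(\pi')\| \leq f^K_a(t) \leq \w(t) = \w\big(d_K(\pi,\pi')\big)$, so $\w$ is a modulus of continuity for $\hat a$. For the converse, suppose $\w \in \Omega$ is a modulus of continuity for $\hat a$, fix $t \geq 0$, and take any pair $\pi, \pi'$ with $d_K(\pi,\pi') \leq t$; since $\w$ is nondecreasing, $\|\hat a(\pi) - \hat a(\pi')\| \leq \w\big(d_K(\pi,\pi')\big) \leq \w(t)$, and taking the supremum over all such pairs yields $f^K_a(t) \leq \w(t)$. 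As $t$ was arbitrary, $f^K_a \leq \w$.

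There is essentially no obstacle here beyond unwinding the definitions: the point is that $f^K_a$ is, by construction, the pointwise-optimal (not necessarily concave) modulus of continuity of $\hat a$ with respect to $d_K$, whereas both $\w^K_a$ and $\w^K_{\hat a}$ are, by their respective definitions, the least concave majorant in $\Omega$ of a modulus of continuity of $\hat a$; routing everything through $f^K_a$ shows these least concave majorants coincide. The only step requiring a word of care --- that the infimum of $\{\w \in \Omega : \w \geq f^K_a\}$ is again a member of $\Omega$, in particular continuous at $0$ --- has already been settled when $\w^K_a$ was introduced, using that $f^K_a$ satisfies the hypotheses of Corollary~\ref{AP'}.
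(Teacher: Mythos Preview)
Your proof is correct and is essentially the same as the paper's: the paper proves the two inequalities $\w^K_a \leq \w^K_{\hat a}$ and $\w^K_{\hat a} \leq \w^K_a$ by exactly the computations you give, just applied directly to $\w^K_{\hat a}$ and $\w^K_a$ rather than packaged as a biconditional characterization of moduli of continuity via $f^K_a$. The content is identical.
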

\begin{proof}
For the proof of $\w_a^K \leq \w_{\hat{a}}^K$ it is enough to show that $f^K_a \leq \w_{\hat{a}}^K$. Fix $t \geq 0$ and $\pi,\pi \in \Rep(\Aa)$ such that $d_K(\pi,\pi') \leq t$. Then we have:
\begin{equation}\label{auxx}
\|\pi(a)-\pi'(a)\|=\|\hat{a}(\pi)-\hat{a}(\pi')\| \leq \w_{\hat{a}}^K\big(d_K(\pi,\pi')\big) \leq \w_{\hat{a}}^K(t).
\end{equation}
Taking supremum in \eqref{auxx} we get $f_a^K(t) \leq \w^K_{\hat{a}}(t)$. \par
On the other hand:
$$\|\pi(a)-\pi'(a)\| \leq f^K_a\big(d_K(\pi,\pi')\big) \leq w^K_a\big(d_K(\pi,\pi')\big),$$
and therefore, from minimality, $\w^K_{\hat{a}} \leq \w^K_a$.
\end{proof}

\section{Compactness and Ascoli property}
\subsection{Definitions, properties and examples}
Let $\pi$ be an \textit{irreducible} representation of a separable $C^*$-algebra $\Aa$: then either $\HHh_{\pi}$ is finite dimensional or $\HHh_{\pi}$ is separable infinite dimensional. Therefore using the suitable unitary we can assume that $\HHh_{\pi}=\ell^2$ or $\HHh_{\pi}=\CCC^n$.
In this second case, we can consider $\pi^{\oo}:=\aleph_0 \odot \pi$. By a slight abuse of terminology (in cases it will not lead to confusion) $\pi^{\oo}$ would be also called \textit{irreducible} representation, if $\pi$ was irreducible---in this manner all irreducible representations of $\Aa$ can be realised as elements of $\Rep(\Aa)$.
We put
$$\Sigma_f(\Aa):=\cl{\{\pi^{\oo}: \pi \ \textup{is irreducible, finite dimensional}\}}$$
$$\Sigma_{\oo}(\Aa):=\cl{\{\pi: \Aa \to \Bb(\ell^2),\ \pi \  \textup{is irreducible}\}}.$$
\begin{equation*} \Sigma(\Aa):=\Sigma_f(\Aa)  \cup \Sigma_{\oo}(\Aa).
\end{equation*}
where the closure is taken with respect to the topology of $d_K$ for some compact generating set $K$ (or equivalently, in the point-norm topology).
For the convenience let us also introduce the following notation:
$$\Sigma^0_f(\Aa)=\{\pi^{\oo}: \pi \ \textup{is irreducible, finite dimensional}\}$$ and $$\Sigma^0_{\oo}(\Aa)=\{\pi: \Aa \to \Bb(\ell^2), \pi \ \textup{is irreducible}\}$$ and also $\Sigma^0(\Aa):=\Sigma^0_f(\Aa) \cup \Sigma^0_{\oo}(\Aa).$
Then $\Sigma(\Aa)$ becomes complete subspace of $\Rep(\Aa)$ and $\Sigma^0(\Aa)$ is dense subset of $\Sigma(\Aa)$.
\begin{df}
A $C^*$-algebra $\Aa$ is called \textit{compact} if $\Sigma(\Aa)$ is a compact topological space.
\end{df}

\begin{thm} If $\Aa$ is a unital, commutative, separable $C^*$-algebra then the spaces
$\Sigma(\Aa)$ and $\widehat{\Aa}$ are homeomorphic.
\end{thm}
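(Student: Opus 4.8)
For a unital, commutative, separable $C^*$-algebra $\Aa$, the classical spectrum $\widehat{\Aa}$ is a compact metrisable space and $\Aa \cong C(\widehat{\Aa})$ by Gelfand--Najmark. Every irreducible representation of a commutative $C^*$-algebra is one-dimensional, hence is a character $\chi \in \widehat{\Aa}$; realised inside $\Rep(\Aa)$ it becomes $\chi^{\oo} = \aleph_0 \odot \chi$, i.e. the operator $\chi(a) I_{\ell^2}$. Thus $\Sigma^0_{\oo}(\Aa) = \emptyset$ and $\Sigma^0_f(\Aa) = \{\chi^{\oo} : \chi \in \widehat{\Aa}\}$. The plan is to show that the map $\Phi : \widehat{\Aa} \to \Sigma(\Aa)$ given by $\Phi(\chi) := \chi^{\oo}$ is a homeomorphism onto $\Sigma(\Aa)$.

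\medskip
\noindent\textbf{Main steps.} First I would fix a compact generating set $K \subset \Aa$ and use the metric $d_K$ on $\Rep(\Aa)$. \emph{Injectivity:} if $\chi_1 \neq \chi_2$ then $\chi_1(a) \neq \chi_2(a)$ for some $a \in \Aa$, whence $\|\chi_1(a) I - \chi_2(a) I\| = |\chi_1(a) - \chi_2(a)| > 0$, so $\chi_1^{\oo} \neq \chi_2^{\oo}$. \emph{Continuity of $\Phi$:} since $d_K(\chi_1^{\oo}, \chi_2^{\oo}) = \sup_{a \in K} |\chi_1(a) - \chi_2(a)|$ and the topology on $\widehat{\Aa}$ is the weak-$*$ topology (pointwise convergence on $\Aa$, equivalently on the compact set $K$), continuity is exactly Theorem~\ref{punktnorm} applied in the commutative case. \emph{Image is all of $\Sigma(\Aa)$:} here I must show $\Sigma(\Aa) = \cl{\Phi(\widehat{\Aa})}$ equals $\Phi(\widehat{\Aa})$. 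Since $\widehat{\Aa}$ is compact and $\Phi$ is continuous, $\Phi(\widehat{\Aa})$ is already compact, hence closed in $\Rep(\Aa)$; therefore $\Sigma(\Aa) = \Sigma_f(\Aa) = \cl{\Sigma^0_f(\Aa)} = \cl{\Phi(\widehat{\Aa})} = \Phi(\widehat{\Aa})$ (and $\Sigma_{\oo}(\Aa)$ is vacuous). \emph{$\Phi^{-1}$ continuous:} a continuous bijection from a compact space to a Hausdorff space is a homeomorphism, and $\Sigma(\Aa) \subset \Rep(\Aa)$ is metric, hence Hausdorff.

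\medskip
\noindent\textbf{The delicate point.} The step that needs genuine care is confirming that $\Sigma^0_{\oo}(\Aa) = \emptyset$ \emph{after} the identification with $\ell^2$-representations, and more importantly that no limit of the $\chi^{\oo}$ can escape the image --- i.e. that $\Phi(\widehat{\Aa})$ is genuinely closed. This is clean once one observes that $\Phi$ is continuous on the \emph{compact} domain $\widehat{\Aa}$: compactness of the domain is what rescues us from the fact, emphasised in the excerpt, that $\Rep(\Aa)$ itself is almost never compact and $d_K$-limits are otherwise hard to control. One subtlety worth spelling out: a priori a net $\chi_s^{\oo} \to \pi$ in $\Rep(\Aa)$ need not have $\pi$ of the form $\chi^{\oo}$ until we know $\chi_s \to \chi$ in $\widehat{\Aa}$; but weak-$*$ compactness of $\widehat{\Aa}$ lets us pass to a convergent subnet $\chi_s \to \chi$, and then continuity of $\Phi$ forces $\pi = \chi^{\oo}$, so the limit lies in the image. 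I would present the argument in the order: (i) irreducibles are characters, so $\Sigma^0(\Aa) = \Phi(\widehat{\Aa})$; (ii) $\Phi$ injective and continuous; (iii) $\widehat{\Aa}$ compact $\Rightarrow$ $\Phi(\widehat{\Aa})$ compact, hence closed, hence $= \Sigma(\Aa)$; (iv) conclude $\Phi$ is a homeomorphism by the compact-to-Hausdorff principle.
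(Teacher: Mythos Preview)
Your proposal is correct and follows exactly the approach the paper indicates: the paper's proof consists of the single sentence ``One checks that the mapping $\widehat{\Aa} \ni \omega \mapsto \aleph_0 \odot \omega \in \Sigma(\Aa)$ defines a homeomorphism,'' and you have carefully supplied the verification (injectivity, continuity via the $d_K$ metric, closedness of the image from compactness of $\widehat{\Aa}$, and the compact-to-Hausdorff principle) that the paper leaves to the reader.
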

\begin{proof} One checks that the mapping $\widehat{\Aa} \ni \omega \mapsto \aleph_0 \odot \omega \in \Sigma(\Aa)$ defines a homeomorphism.
\end{proof}
In particular, we get that (unital, separable) commutative $C^*$-algebras are compact.

\begin{exs} 1. Let $\Aa=C(X)$ be a commutative, separable $C^*$-algebra and let $d$ be a fixed metric on $X$. Every irreducible representation $\pi$ of $\Aa$ is one-dimensional thus is of the form $\pi(a)=\w(a)$ where $\w$ is a character of $\Aa$. Then $\pi(a)=\delta_x(a)$ for some $x \in X$. Identifying $\pi \simeq \pi^{\oo} \ (=:\pi_x)$ where $\pi^{\oo} \in \Sigma(\Aa)$ we have that
$$\pi_{x}(a)=\delta_x(a)I_{\ell^2}.$$
Therefore, if $\pi_x,\pi_y \in \Sigma(\Aa)$ are representations corresponding to points $x,y \in X$ then
$$\|\pi_x(f)-\pi_y(f)\|=\|f(x)I-f(y)I\|=|f(x)-f(y)|.$$
So if we take
$$K:=\{f:X \to [0,\diam X]: f \ \textup{is a contraction with respect to} \ d\},$$
we would get that $d_K(\pi_x,\pi_y)=d(x,y)$. In other words, the metric spaces $(X,d)$ and $(\Sigma(\Aa),d_K)$ are \textit{isometric}.
Thus the pair $(\Sigma(\Aa),d_K)$ where $K \subset \Aa$ is a compact generating set for a unital, separable $C^*$-algebra $\Aa$ may be though as a generalisation of the (classical) spectrum of a (unital, separable) $C^*$-algebra, treated as a metric space.
\\
Note also that if we drop the assumption of compactness of $K$ then the topology of $d_K$ may (drastically) differ from the point-norm topology. As an example of this, let us take $K$ to be the unit ball in $\Aa$. Then for $x,y \in X, x \neq y$ we can always find $f \in K$ such that $f(x)=0$ and $f(y)=1$. Thus we get $d_K(\pi(x),\pi(y)) \geq 1$ which implies that irreducible representations in  $\Sigma(\Aa)$ form a discrete set. \par
2. If $\Aa,\Bb$ are any two $C^*$-algebras then the set $\Irr(\Aa \oplus \Bb)$ of irreducible representations of $\Aa \oplus \Bb$ may be identified with $\Irr(\Aa) \sqcup \Irr(\Bb)$ in the following way: if $\pi:\Aa \oplus \Bb \to \Bb(\HHh)$ is an irreducible representation then either there exists irreducible representation $\pi_1:\Aa \to \Bb(\HHh)$ satisfying $\pi(a,b)=\pi_1(a)$ or there exists irreducible representation $\pi_2:\Bb \to \Bb(\HHh)$ such that $\pi(a,b)=\pi_2(b)$.
It follows that $\Sigma(\Aa \oplus \Bb)=\Sigma(\Aa) \sqcup \Sigma(\Bb)$ which shows that the direct sum of compact $C^*$-algebras is also compact.
\end{exs}

The above example is a special case of a more general statement:
\begin{thm} \label{epizwarta} If $\Aa$ and $\Bb$ are $C^*$-algebras, $\Aa$ is compact and $\alpha: \Aa \to \Bb$ is a *-epimorphism then $\Bb$ is also compact.
\end{thm}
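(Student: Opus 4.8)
The plan is to transport the compactness of $\Sigma(\Aa)$ to $\Sigma(\Bb)$ through the dual map of $\alpha$. First I would record the standing hypotheses for $\Bb$: since $\alpha$ is a surjective $*$-homomorphism, $\Bb=\alpha(\Aa)$ is separable (continuous image of a separable space) and unital with unit $\alpha(1)$, so that $\Rep(\Bb)$, $\Sigma(\Bb)$ and the metrics $d_L$ are all meaningful. Next, fix a compact generating set $K$ for $\Aa$ and put $L:=\alpha(K)$. Then $L$ is compact, and if $\Aa_0$ denotes the $*$-algebra generated by $K\cup\{1\}$ (so $\cl{\Aa_0}=\Aa$), then $\alpha(\Aa_0)$ is the $*$-algebra generated by $L\cup\{1\}$ and $\cl{\alpha(\Aa_0)}=\cl{\alpha(\Aa)}=\Bb$; hence $C^*(L\cup\{1\})=\Bb$, i.e. $L$ is a compact generating set for $\Bb$.

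Then I would introduce the map $\alpha^{*}\colon\Rep(\Bb)\to\Rep(\Aa)$ defined by $\alpha^{*}(\rho):=\rho\circ\alpha$. It is well defined, since a unital representation composed with the unital $*$-homomorphism $\alpha$ is again a unital representation. The crucial observation is that for all $\rho_1,\rho_2\in\Rep(\Bb)$,
$$d_K\big(\alpha^{*}\rho_1,\alpha^{*}\rho_2\big)=\sup_{a\in K}\|\rho_1(\alpha(a))-\rho_2(\alpha(a))\|=\sup_{b\in L}\|\rho_1(b)-\rho_2(b)\|=d_L(\rho_1,\rho_2),$$
so $\alpha^{*}$ is an \emph{isometric} embedding of $(\Rep(\Bb),d_L)$ into $(\Rep(\Aa),d_K)$. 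I would then check that $\alpha^{*}$ carries irreducibles to irreducibles: because $\alpha$ is surjective, the image of $\rho\circ\alpha$ equals the image of $\rho$, so the two commutants coincide and $\rho\circ\alpha$ is irreducible exactly when $\rho$ is, with $\dim\HHh_{\rho\circ\alpha}=\dim\HHh_{\rho}$. Moreover $\rho^{\oo}\circ\alpha=(\rho\circ\alpha)^{\oo}$ by the very definition of $\aleph_0\odot(\cdot)$. Combining these, $\alpha^{*}$ maps $\Sigma^0_f(\Bb)$ into $\Sigma^0_f(\Aa)$ and $\Sigma^0_\oo(\Bb)$ into $\Sigma^0_\oo(\Aa)$, hence $\alpha^{*}\big(\Sigma^0(\Bb)\big)\subseteq\Sigma^0(\Aa)$.

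Finally I would conclude. Since $\alpha^{*}$ is a uniform (indeed isometric) embedding and $\Sigma^0(\Bb)$ is dense in $\Sigma(\Bb)$, the image $\alpha^{*}\big(\Sigma(\Bb)\big)$ is the closure of $\alpha^{*}\big(\Sigma^0(\Bb)\big)$ inside $\alpha^{*}\big(\Rep(\Bb)\big)$, so it lies in $\cl{\Sigma^0(\Aa)}=\Sigma(\Aa)$. Being the isometric image of the complete space $\Sigma(\Bb)$, the set $\alpha^{*}\big(\Sigma(\Bb)\big)$ is a complete, hence closed, subspace of the compact metric space $\Sigma(\Aa)$, and a closed subset of a compact space is compact. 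As $\alpha^{*}$ restricts to an isometry $\Sigma(\Bb)\to\alpha^{*}\big(\Sigma(\Bb)\big)$, it follows that $\Sigma(\Bb)$ is compact, i.e. $\Bb$ is a compact $C^*$-algebra. I do not expect a genuine obstacle here; the only point requiring a little care is the bookkeeping for finite-dimensional representations, namely that $\alpha^{*}$ intertwines the passage $\pi\mapsto\pi^{\oo}$ on both sides so that $\Sigma^0_f(\Bb)$ really lands in $\Sigma^0_f(\Aa)$, but this is immediate from the definition of $\aleph_0\odot\pi$. The heart of the argument is simply that $\alpha^{*}$ is an isometric embedding carrying $\Sigma(\Bb)$ into $\Sigma(\Aa)$.
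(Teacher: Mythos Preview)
Your proof is correct and follows essentially the same approach as the paper: define $\alpha^{*}(\rho)=\rho\circ\alpha$, verify it is an isometry from $(\Sigma(\Bb),d_{\alpha(K)})$ into $(\Sigma(\Aa),d_K)$, and conclude. In fact you are more careful than the paper in two places---you explicitly check that $\Bb$ inherits separability and a unit so the framework applies, and you spell out why the isometric image of $\Sigma(\Bb)$ is closed in $\Sigma(\Aa)$ (via completeness), a point the paper leaves implicit.
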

\begin{proof} Define $\alpha^*: \Sigma(\Bb) \to \Sigma(\Aa)$ as $\alpha^*(\rho):=\rho \circ \alpha$. Then $\alpha^*$ indeed has values in $\Sigma(\Aa)$, since if $\rho$ is irreducible, then $(\rho \circ \alpha(\Aa))'=(\rho(\Bb))'=\CCC I$ thus $\alpha^*(\rho)$ is also irreducible; from the continuity of $\alpha^*$ (in the point-norm topology) we conclude that $\alpha^*(\rho) \in \Sigma(\Aa)$ for $\rho \in \Sigma(\Bb)$. \par
Take a compact set $K$ generating $\Aa$: then $\alpha(K)$ is also compact and generates $\Bb$. Therefore we get, for $\rho,\rho' \in \Sigma(\Bb)$:
\begin{gather*}
d_K(\alpha^*(\rho),\alpha^*(\rho'))=\sup\{\|\rho(\alpha(a))-\rho'(\alpha(a))\|: a \in K\}=\\
=\sup\{\|\rho(b)-\rho'(b)\|: b \in \alpha(K)\}=d_{\alpha(K)}(\rho,\rho').
\end{gather*}
In this way we obtain an isometric embedding $(\Sigma(\Bb),d_{\alpha(K)}) \hookrightarrow (\Sigma(\Aa),d_K)$ and it follows that $\Sigma(\Bb)$ is compact.
\end{proof}

\begin{cor} If $\Aa$ is a compact $C^*$-algebra and $\Ii \subset \Aa$ is an ideal then
$\Aa / \Ii$ is also compact.
\end{cor}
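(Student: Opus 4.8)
The plan is to obtain this directly from Theorem~\ref{epizwarta}. First I would record that, as usual, $\Ii$ is taken to be a closed two-sided ideal, so that the quotient $\Aa/\Ii$ is again a $C^*$-algebra; it is unital (with unit $1+\Ii$) because $\Aa$ is, and it is separable because a quotient of a separable space is separable. Hence $\Aa/\Ii$ belongs to the class of algebras for which ``compact'' was defined. (In the degenerate case $\Ii=\Aa$ the quotient is the zero algebra, which admits no unital representation on $\ell^2$ since $\pi(1)=I\neq 0$; thus $\Sigma(\Aa/\Ii)=\emptyset$ is trivially compact. Otherwise $\Ii$ is proper and $\Aa/\Ii$ is a genuine unital, separable $C^*$-algebra.)

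Next I would observe that the canonical quotient map $q:\Aa\to\Aa/\Ii$, $q(a)=a+\Ii$, is a surjective $*$-homomorphism with $q(1)=1+\Ii$, that is, a $*$-epimorphism in precisely the sense used in Theorem~\ref{epizwarta}.

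Finally I would invoke Theorem~\ref{epizwarta} with $\Bb:=\Aa/\Ii$ and $\alpha:=q$: since $\Aa$ is compact and $q$ is a $*$-epimorphism, the theorem immediately gives that $\Aa/\Ii$ is compact. Unwinding the argument of that theorem, this is the statement that for any compact generating set $K$ of $\Aa$ the set $q(K)$ is a compact generating set of $\Aa/\Ii$ and $\alpha^*:\rho\mapsto\rho\circ q$ is an isometric embedding $(\Sigma(\Aa/\Ii),d_{q(K)})\hookrightarrow(\Sigma(\Aa),d_K)$; compactness of $\Sigma(\Aa)$ then forces its image, hence $\Sigma(\Aa/\Ii)$, to be compact.

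I do not anticipate any real obstacle: the substantive work is entirely contained in Theorem~\ref{epizwarta}, and the only points needing a word of care are the bookkeeping that $\Aa/\Ii$ still satisfies the standing hypotheses (unital and separable) and the trivial case $\Ii=\Aa$.
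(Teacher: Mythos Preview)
Your proposal is correct and matches the paper's intended approach: the corollary is stated immediately after Theorem~\ref{epizwarta} with no separate proof, precisely because it follows at once by applying that theorem to the quotient epimorphism $q:\Aa\to\Aa/\Ii$. Your added bookkeeping (separability and unitality of the quotient, the degenerate case $\Ii=\Aa$) is a welcome bit of care that the paper leaves implicit.
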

All $C^*$-algebras considered so far were unital, therefore the question whether $\Ii$ is compact is not well posed. To deal with it we introduce the following:
\begin{df} A nonunital $C^*$-algebra $\Aa$ is called \textit{compact} if $\Aa^+$ is compact.
\end{df}
\begin{rem} If $\Aa$ already has a unit we can still adjoin the unit to $\Aa$ and consider $\Aa^+$. In such case we have an isomorphism $\Aa^+ \cong \Aa \oplus \CCC$ (see e.g. \cite{Weg}) thus $\Sigma(\Aa^+)=\Sigma(\Aa) \sqcup \{\delta\}$ where $\delta(x,\lambda)=\lambda$ for $x \in \Aa, \lambda \in \CCC$. Therefore $\Aa$ is compact if and only if  $\Aa^+$ is compact.
\end{rem}

With such definition we have the following:
\begin{fact} If $\Ii \subset \Aa$ is an ideal in a unital, compact $C^*$-algebra $\Aa$ then $\Ii$ is also compact.
\end{fact}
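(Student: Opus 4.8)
The plan is to realise $\Ii^{+}$ as a unital sub-$C^*$-algebra of $\Aa$ and then to push the compactness of $\Sigma(\Aa)$ forward onto $\Sigma(\Ii^{+})$ by restricting representations; this is the construction dual to a unital inclusion, in the same spirit as the proof of Theorem~\ref{epizwarta} (where the dual of an epimorphism produced an embedding of spectra, here an inclusion will produce a surjection).

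If $\Ii=\Aa$ there is nothing to do: $\Ii$ is then unital and compact by hypothesis (equivalently, $\Ii^{+}\cong\Aa\oplus\CCC$ is a direct sum of compact algebras, hence compact). So I assume $\Ii\subsetneq\Aa$; then $1_{\Aa}\notin\Ii$, and the map $\varphi\colon\Ii^{+}\to\Aa$, $\varphi(x,\lambda):=x+\lambda 1_{\Aa}$, is a unital injective $*$-homomorphism. Thus $\Ii^{+}$ is isomorphic to the unital $C^*$-subalgebra $\Dd:=\Ii+\CCC 1_{\Aa}$ of $\Aa$, inside which $\Ii$ is an ideal with $\Dd/\Ii\cong\CCC$; write $\delta$ for the one-dimensional character $x+\lambda 1_{\Aa}\mapsto\lambda$ of $\Dd$. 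Since $\Ii$ is compact if and only if $\Ii^{+}$ is (by definition when $\Ii$ is nonunital, and by the Remark preceding this Fact when it is unital), it is enough to show that $\Dd$ is compact.

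The heart of the argument is the restriction map $R\colon\Rep(\Aa)\to\Rep(\Dd)$, $R(\rho):=\rho|_{\Dd}$, which is well defined ($\rho|_{\Dd}$ is again unital and acts on $\ell^{2}$) and continuous for the point-norm topology — hence, by Theorem~\ref{punktnorm}, for the metric $d_{K}$ on each side. I would show $R(\Sigma^{0}(\Aa))=\Sigma^{0}(\Dd)$ using the standard extension theory of representations of an ideal. Concretely: for an irreducible $\rho$ of $\Aa$, if $\rho|_{\Ii}\neq 0$ then $\rho|_{\Ii}$ is a nonzero irreducible representation of $\Ii$ and $\rho|_{\Dd}$ is irreducible on the same space; if $\rho|_{\Ii}=0$ then $\rho|_{\Dd}$ sends $x+\lambda 1_{\Aa}$ to $\lambda I$, so — bearing in mind the inflation convention $\pi\mapsto\pi^{\oo}=\aleph_{0}\odot\pi$ used to place finite-dimensional irreducibles into $\Rep$ — the element of $\Sigma^{0}(\Aa)$ attached to $\rho$ is carried by $R$ to $\delta^{\oo}\in\Sigma^{0}_{f}(\Dd)$. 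Conversely, an irreducible representation of $\Dd$ either annihilates $\Ii$ and is then (equivalent to) $\delta$, whose inflation $\delta^{\oo}$ is attained by lifting any irreducible representation of $\Aa/\Ii$ (nonzero because $\Ii\subsetneq\Aa$) along $\Aa\to\Aa/\Ii$ and applying $R$; or it does not annihilate $\Ii$, and then its restriction to $\Ii$ is irreducible and extends uniquely to an irreducible representation of $\Aa$ whose restriction to $\Dd$ is — by uniqueness of the extension inside $\Dd$ — the given representation. Matching $\Sigma^{0}_{f}$ with $\Sigma^{0}_{f}$ and $\Sigma^{0}_{\oo}$ with $\Sigma^{0}_{\oo}$ throughout then yields the equality.

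To conclude: $\Sigma(\Aa)=\cl{\Sigma^{0}(\Aa)}$ is compact by hypothesis, so $R(\Sigma(\Aa))$ is a compact — hence closed — subset of $\Rep(\Dd)$ containing $R(\Sigma^{0}(\Aa))=\Sigma^{0}(\Dd)$, hence also containing $\cl{\Sigma^{0}(\Dd)}=\Sigma(\Dd)$; and continuity of $R$ gives the reverse inclusion $R(\Sigma(\Aa))=R(\cl{\Sigma^{0}(\Aa)})\subseteq\cl{R(\Sigma^{0}(\Aa))}=\Sigma(\Dd)$. Therefore $\Sigma(\Dd)=R(\Sigma(\Aa))$ is a continuous image of a compact space and so compact, whence $\Dd\cong\Ii^{+}$ is compact, i.e. $\Ii$ is compact. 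I expect the real obstacle to be the middle paragraph: one has to check that $R$ respects the decomposition $\Sigma^{0}=\Sigma^{0}_{f}\cup\Sigma^{0}_{\oo}$, handle the inflations $\pi\mapsto\pi^{\oo}$ consistently across the three algebras $\Ii$, $\Dd$ and $\Aa$, and confirm that the collapsed point $\delta^{\oo}$ genuinely lies in the image of $R$ — which is precisely where the nonemptiness of $\Sigma(\Aa/\Ii)$, i.e. the assumption $\Ii\neq\Aa$, enters.
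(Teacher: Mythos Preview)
Your argument is correct and follows essentially the same route as the paper: both produce a continuous restriction map from $\Sigma(\Aa)$ onto $\Sigma(\Ii^{+})$ (you via the concrete embedding $\Ii^{+}\cong\Dd=\Ii+\CCC 1_{\Aa}\subset\Aa$, the paper by extending $\rho|_{\Ii}$ to $\Ii^{+}$, which amounts to the same thing) and conclude compactness of the image. Your version is in fact more careful than the paper's on two points the paper glosses over---the closure argument showing $R(\Sigma(\Aa))=\Sigma(\Dd)$ rather than merely $R(\Sigma^{0}(\Aa))=\Sigma^{0}(\Dd)$, and the bookkeeping of the inflations $\pi\mapsto\pi^{\oo}$.
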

\begin{proof} If $\pi$ is an irreducible representation for $\Ii$ then $\pi$ can be uniquely extended to a representation of $\Aa$ (which is obviously still irreducible). In other words, every irreducible representation of $\Ii$ is a restriction of irreducible representation of $\Aa$: this gives a surjection $\Irr(\Aa) \ni \pi \mapsto \pi|_{\Ii} \in \Irr(\Ii) \cup \{0\}$, and after adjoining a unit to $\Ii$---surjection $\Irr(\Aa) \ni \pi \mapsto \pi|_{\Ii^+} \in \Irr(\Ii^+)$. Thus we get the restriction mapping which can be viewed as a mapping $\Rep(\Aa) \to \Rep(\Ii^+)$ and obviously it is continuous in the point-norm topology---therefore the above remarks show that this gives a surjection $\Sigma(\Aa) \to \Sigma(\Ii^+)$. Since $\Sigma(\Aa)$ is a compact space hence $\Sigma(\Ii^+)$ is also and $\Ii$ is compact.
\end{proof}

The above proofs show that any epimorphism of $C^*$-algebras determines an embedding on the level of spaces $\Sigma(-)$; on the other hand, an inclusion of an ideal in $C^*$-algebra produces an epimorphism at the level of spaces $\Sigma(-)$.

A rich class of compact $C^*$-algebras is provided by unital, subhomogenous $C^*$-algebras:
\begin{thm} \label{subh} If $\Aa$ is $N$-subhomogenous $C^*$-algebra then $\Aa$ is compact.
\end{thm}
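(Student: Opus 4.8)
The plan is to show that the metric space $(\Sigma(\Aa),d_K)$ is (sequentially) compact, the whole point being that $N$-subhomogeneity confines all the representations that matter to the \emph{finite dimensional} matrix algebra $M_N$, whose norm-closed balls are compact; after that the argument is a standard diagonal/Arzel\`a--Ascoli extraction of the kind already used in the proofs of Theorems \ref{punktnorm} and \ref{rownowazne}. First I would reduce to the unital case: if $\Aa$ is nonunital, then by the usual restriction/extension correspondence for irreducible representations the only irreducible representation of $\Aa^+$ not arising as the canonical (same-space) extension of an irreducible representation of $\Aa$ is the one-dimensional character annihilating $\Aa$, so $\Aa^+$ is again $N$-subhomogeneous, and $\Aa$ is compact (by definition) exactly when $\Aa^+$ is. So assume $\Aa$ unital and separable, fix once and for all a compact generating set $K$ and a countable dense subset $\{a_j\}_{j\in\NNN}\subset\Aa$, and work with $d_K$. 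Since every irreducible representation of $\Aa$ has dimension $\le N<\oo$, there are no irreducible representations on the infinite-dimensional space $\ell^2$, so $\Sigma^0_\oo(\Aa)$ is empty and $\Sigma(\Aa)=\Sigma_f(\Aa)=\cl{\Sigma^0_f(\Aa)}$. This is a closed subset of the complete metric space $(\Rep(\Aa),d_K)$, and in a metric space compactness is the same as sequential compactness, so it suffices to extract a $d_K$-convergent subsequence from an arbitrary sequence in $\Sigma(\Aa)$; approximating each term to within $1/k$ by an element of the dense set $\Sigma^0_f(\Aa)$, it is enough to do this for a sequence $(\pi_k^\oo)_k$ with every $\pi_k$ irreducible and finite dimensional, $\pi_k:\Aa\to M_{n_k}$, $n_k\le N$.

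Now comes the main step. Passing to a subsequence I may assume $n_k$ equals a constant $n\le N$, so that all the $\pi_k$ take values in the \emph{single} finite-dimensional $C^*$-algebra $M_n$. Because $\|\pi_k(a_j)\|\le\|a_j\|$ for all $j,k$ and closed balls in $M_n$ are norm-compact, a diagonal argument produces a further subsequence along which $\pi_k(a_j)$ converges in norm for each $j$. Each $\pi_k$ is $1$-Lipschitz, since $\|\pi_k(a)-\pi_k(b)\|\le\|a-b\|$, so the family is equicontinuous, and---exactly as in the proof of Theorem \ref{punktnorm}---convergence on the dense set $\{a_j\}$ propagates to all of $\Aa$: $\pi(a):=\lim_k\pi_k(a)$ exists for every $a\in\Aa$ and defines a unital $*$-homomorphism $\pi:\Aa\to M_n$ (linearity, multiplicativity, $*$-preservation and $\pi(1)=I$ all pass to the limit, multiplication being continuous on $M_n$). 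Then $\pi^\oo\in\Rep(\Aa)$ and $\|\pi_k^\oo(a)-\pi^\oo(a)\|=\|\pi_k(a)-\pi(a)\|\to0$ for each $a$, that is $\pi_k^\oo\to\pi^\oo$ in the point-norm topology, hence in $d_K$. Being a $d_K$-limit of elements of $\Sigma^0_f(\Aa)$, the representation $\pi^\oo$ lies in $\cl{\Sigma^0_f(\Aa)}=\Sigma(\Aa)$, which completes the proof.

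I expect the one genuinely substantive point to be the structural reduction at the start: recognising that $N$-subhomogeneity lets one replace the non-locally-compact target $\Bb(\ell^2)$ by the fixed finite-dimensional $M_N$, where closed balls are compact. After that, everything is the by-now-routine ``$\eps/3$ plus equicontinuity'' machinery already appearing in the paper. The only subtlety that should be stated with care is that the extraction must be carried out on the dense subspace $\Sigma^0_f(\Aa)$ and then transported to its closure: the limit $\pi^\oo$ one obtains need not have $\pi$ irreducible (a point-norm limit of irreducible representations can fail to be irreducible), but it is still a point of $\Sigma(\Aa)$, which is all that is required.
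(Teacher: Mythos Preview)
Your proof is correct. The approach, however, differs from the paper's: the paper embeds the set $X_n$ of all unital $*$-representations $\Aa\to M_n$ into the Tychonoff product $\prod_{a\in\Aa}B(0,\|a\|)\subset\prod_{a\in\Aa}M_n$, observes that the image is closed, and invokes Tychonoff's theorem to conclude that each $X_n$ is compact; then $\Sigma^0(\Aa)\subset\aleph_0\odot\bigsqcup_{n=1}^N X_n$, the latter set is compact in $\Rep(\Aa)$ (since $\pi\mapsto\aleph_0\odot\pi$ is an embedding), and $\Sigma(\Aa)$ is a closed subset of it. You instead exploit separability of $\Aa$ and run a diagonal extraction directly in the finite-dimensional target $M_n$, arguing via sequential compactness. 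These are the two classical routes to compactness in function spaces (Tychonoff product versus Arzel\`a--Ascoli/diagonal), and both rest on exactly the structural reduction you correctly single out: $N$-subhomogeneity lets one replace $\Bb(\ell^2)$ by a fixed finite-dimensional $M_n$, where closed balls are norm-compact. Your argument is marginally more elementary (no appeal to Tychonoff over an uncountable index set) and your explicit reduction to the unital case is a useful addition the paper leaves implicit; the paper's version is more concise and isolates the compactness of the full spaces $X_n$, which it reuses immediately afterwards in the proof for shrinking algebras.
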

\begin{proof} For a natural number $n$ define
$$X_n:=\{\pi:\Aa \to M_n: \pi \ \textup{is a unital *-representation}\}.$$
We equip $X_n$ with the topology of pointwise convergence. We claim that $X_n$ is compact. To see this, consider the mapping $$\iota: X_n \ni \pi \mapsto (\pi(a))_{a \in \Aa} \in \prod_{a \in \Aa}B\big(0,\|a\|\big) \subset \prod_{a \in \Aa}M_n$$
where the codomain is equipped with the product topology. 
This mapping is obviously injective and for $\{\pi_i\}_i \subset X_n$ we have $\pi_i \to \pi$ 
if and only if $\iota(\pi_i) \to \iota(\pi)$---therefore this mapping is an embedding. 
Moreover, this embedding is closed: to see this, assume that $z_i \to z$ where $z_i \in \iota(X_n)$---thus $z_i=(\pi_i(a))_{a \in \Aa}$. Let $z$ be of the form $z=(z_a)_{a \in \Aa}$---then for every $a \in \Aa$ we have $\pi_i(a) \to z_a$. This implies that the mapping $\Aa \ni a \mapsto z_a$ becomes a unital *-representation: for example
$$z_{ab}=\lim_i\pi_i(ab)=\lim_i (\pi_i(a)\pi_i(b))=\lim_i \pi_i(a) \lim_i\pi_i(b)=z_az_b$$
(the analogous argument applies to other algebraic operations). Denote by $\pi$ the representation obtained above: then $\pi_i \to \pi$ and hence $z=\iota(\pi) \in \iota(X_n)$. \\
From the Tychonoff theorem the cartesian product $\prod_{a \in \Aa}B\big(0,\|a\|\big)$ 
is compact thus $\iota(X_n)$ is also compact (as a closed subset) and hence $X_n$ is compact as well. \\
Since $\Aa$ is $N$-subhomogenous then $\Irr(\Aa) \subset \bigsqcup_{n=1}^NX_n$ and, as the mapping $\pi \mapsto \aleph_0 \odot \pi$ is an embedding, the set $\aleph_0 \odot \bigsqcup_{n=1}^NX_n$ is a compact subset of $\Rep(\Aa)$ (containing $\Sigma^0(\Aa)$). Taking the closure we get $\Sigma(\Aa) \subset \aleph_0 \odot \bigsqcup_{n=1}^N X_n$ hence $\Sigma(\Aa)$ is a compact space and $\Aa$ is a compact $C^*$-algebra.
\end{proof}

The above result still holds for all shrinking $C^*$-algebras:
\begin{thm} If $\Aa$ is a shrinking $C^*$-algebra then $\Aa$ is compact (usually as a nonunital $C^*$-algebra).
\end{thm}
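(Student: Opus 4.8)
The plan is to reduce to the unital setting and prove that $\Sigma(\Aa^+)$ is totally bounded; the point is that the ``shrinking'' hypothesis forces \emph{all} irreducible representations of large dimension into one small ball, around the character ``at infinity''. So regard $\Aa$ as nonunital (the typical case) and pass to the unital, separable algebra $\Aa^+$; fix a compact set $K$ generating $\Aa^+$. Since $\Aa$ is shrinking, each of its irreducible representations is finite dimensional. An irreducible representation $\pi$ of $\Aa^+$ either is non-zero on the ideal $\Aa$---then $\pi|_\Aa$ is irreducible (hence finite dimensional) and $\pi$ is its unique unital extension---or it annihilates $\Aa$ and so factors through $\Aa^+/\Aa\cong\CCC$, giving the one-dimensional character $\delta\colon(a,\lambda)\mapsto\lambda$. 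Thus every irreducible representation of $\Aa^+$ is finite dimensional, so $\Sigma^0_{\oo}(\Aa^+)=\emptyset$ and $\Sigma(\Aa^+)=\Sigma_f(\Aa^+)=\cl{\Sigma^0_f(\Aa^+)}$. Being a closed subspace of the complete metric space $(\Rep(\Aa^+),d_K)$, it is compact as soon as $\Sigma^0_f(\Aa^+)$ is totally bounded, which is what I will prove.

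The crucial step is a uniform version of the shrinking condition. Let $p\colon\Aa^+\to\Aa$ be the canonical bounded linear projection and $C:=\|p\|$. I claim that for every $\eps>0$ there is $N\in\NNN$ such that every irreducible representation $\sigma$ of $\Aa$ with $\dim\HHh_\sigma>N$ satisfies $\sup_{z\in K}\|\sigma(p(z))\|<\eps$. Fixing first a single $a\in\Aa$, the number $\sup\{\|\sigma(a)\|:\sigma\text{ irreducible},\ \dim\HHh_\sigma>N\}$ tends to $0$ as $N\to\oo$: otherwise one could extract irreducible representations $\sigma_1,\sigma_2,\dots$ with $\dim\HHh_{\sigma_k}\to\oo$ and $\|\sigma_k(a)\|$ bounded away from $0$, contradicting the definition of a shrinking algebra. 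Since every map $z\mapsto\|\sigma(p(z))\|$ on $\Aa^+$ is $C$-Lipschitz (uniformly in $\sigma$), covering the compact set $K$ by finitely many balls of radius $\eps/(3C)$ and applying the previous statement at their centres yields the claim.

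Now fix $\eps>0$ and choose $N$ as above. If $\pi$ is an irreducible representation of $\Aa^+$ with $\dim\HHh_\pi>N$ (in particular $\dim\HHh_\pi\ge2$), then $\sigma:=\pi|_\Aa$ is irreducible on $\Aa$ with $\dim\HHh_\sigma=\dim\HHh_\pi>N$, and writing $z=(p(z),\lambda(z))$ we have $\pi(z)=\sigma(p(z))+\lambda(z)I$; hence $\pi^{\oo}(z)-\delta^{\oo}(z)=\aleph_0\odot\sigma(p(z))$, of norm $\|\sigma(p(z))\|$ (note $\delta^{\oo}(z)=\lambda(z)I_{\ell^2}$ is a scalar operator, the same in every identification), so $d_K(\pi^{\oo},\delta^{\oo})=\sup_{z\in K}\|\sigma(p(z))\|<\eps$. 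Thus every such $\pi^{\oo}$ lies in the single ball $B(\delta^{\oo},\eps)$. The remaining points of $\Sigma^0_f(\Aa^+)$ are the $\pi^{\oo}$ with $\dim\HHh_\pi\le N$, and these lie in $\bigcup_{n=1}^{N}\bigl(\aleph_0\odot X_n\bigr)$, where $X_n$ is the space of unital $*$-representations $\Aa^+\to M_n$; by the Tychonoff argument in the proof of Theorem~\ref{subh} each $X_n$ is compact, and $\pi\mapsto\aleph_0\odot\pi$ carries it continuously into $\Rep(\Aa^+)$ (the representations in $X_n$ being contractive, pointwise convergence on $K$ is automatically uniform on $K$), so $\bigcup_{n=1}^{N}(\aleph_0\odot X_n)$ is compact and admits a finite cover by $\eps$-balls. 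Adjoining the ball $B(\delta^{\oo},\eps)$ we obtain a finite $\eps$-cover of $\Sigma^0_f(\Aa^+)$. Hence $\Sigma(\Aa^+)$ is compact, i.e.\ $\Aa^+$---and therefore $\Aa$---is a compact $C^*$-algebra. (If $\Aa$ is already unital the same reasoning applies with $\delta$ absent; alternatively, a unital shrinking algebra is subhomogenous, so Theorem~\ref{subh} applies directly.)

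The main obstacle is the second step: converting the sequential shrinking hypothesis into honest uniformity---simultaneously over all high-dimensional irreducibles and over the compact generating set $K$---and correctly identifying $\delta^{\oo}$ as the one accumulation point that absorbs them. Once that is in hand, the bounded-dimension part is exactly the argument of Theorem~\ref{subh}.
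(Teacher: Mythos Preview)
Your proof is correct and follows essentially the same strategy as the paper: reduce to $\Aa^+$, note that all irreducibles are finite dimensional, handle the bounded-dimension part via the compact sets $X_n$ from Theorem~\ref{subh}, and show that the high-dimensional irreducibles accumulate at $\delta^{\oo}$.

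The one organizational difference is that you argue via total boundedness and therefore establish by hand a \emph{uniform} version of the shrinking condition (your ``crucial step''), whereas the paper argues via sequential compactness: given a sequence $(\pi_n^{\oo})$, either the dimensions have a bounded subsequence (handled by $\bigsqcup_{n\le N}X_n$) or $\dim\HHh_{\pi_n}\to\oo$, in which case the shrinking hypothesis gives $\|\pi_n^{\oo}(a+\lambda 1)-\delta^{\oo}(a+\lambda 1)\|=\|\pi_n(a)\|\to 0$ pointwise, and the already-proven coincidence of the point-norm and $d_K$ topologies (Theorem~\ref{punktnorm}) delivers $d_K$-convergence for free. Your explicit $\eps/(3C)$-net argument is really a re-derivation of that coincidence in this special case. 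Both routes are fine; the paper's is shorter given the infrastructure already in place, while yours is more self-contained.
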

\begin{proof} Let $\Bb=\Aa^+$. Since $\Rep(\Bb)$ and $\Sigma(\Bb)$ are metrisable spaces hence compactness is equivalent to sequential compactness. Moreover, as $\Bb$ is shrinking, $\Sigma^0_{\oo}(\Bb)=\emptyset$ thus $\Sigma^0(\Bb)=\Sigma^0_f(\Bb)$ and $\Sigma(\Bb)=\cl{\Sigma^0_f(\Aa)}$. Therefore it suffices to check sequential compactness for $\Sigma^0_f(\Aa)$. So take a sequence $(\pi_n)_n \subset \Irr(\Aa)$---if (numerical) sequence $(\dim \HHh_{\pi_n})_n$ has a bounded subsequence then this subsequence is contained in $\bigsqcup_{n=1}^NX_n$ for some $N \in \NNN$, where $X_n$ are such as in the proof of Theorem \ref{subh}. Thus arguing as in the proof of Theorem \ref{subh}, we conclude that this subsequence has a further subsequence which is convergent: call it $(\pi_{n_{k_m}})_m$. Then the sequence $(\pi_{n_{k_m}}^{\oo})_m $ is convergent in $\Sigma(\Bb)$. On the other hand, if $\dim \HHh_{\pi_n} \to \oo$ then $\pi_n^{\oo} \to \delta^{\oo}$ where $\delta(a+\lambda 1)=\lambda$ is the one dimensional (irreducible) representation: indeed, for $a \in \Aa$ we have
$$\|\pi^{\oo}_n(a+\lambda 1)-\delta^{\oo}(a+\lambda 1)\|=\|\pi_n^{\oo}(a)\| \to 0.$$
\end{proof}
In the previous section we considered the notion of equicontinuity of $L \subset \Aa$, functions $f_L^K$ defined as $f^K_L(t)=\sup \{|\pi(a)-\pi'(a)\|: a \in L, d_K(\pi,\pi') \leq t\}$, the appropriate moduli of continuity $\w^K_a$ etc. All these notions can be defined once again using not all representations but only those lying in $\Sigma(\Aa)$. Obviously, if a set $L \subset \Aa$ is equicontinuous in the sense of the definition which uses $\Rep(\Aa)$, then it is also equicontinuous in the sense of the definition using $\Sigma(\Aa)$; similarly, all moduli of continuity defined using $\Sigma(\Aa)$ will be estimated by those defined using all elements in $\Rep(\Aa)$. All the proofs stay the same when we replace $\Rep(\Aa)$ by $\Sigma(\Aa)$---some difficulties arise in the context of morphisms between $C^*$-algebras. The reason is that irreducibility do not behave \textit{functorially}: to be more precise, if $\pi$ is an irreducible representation and $\alpha$ is a *-homomorphism between $C^*$-algebras then the composition $\pi \circ \alpha$ need not to be irreducible. Note that if we have defined the notion of equicontinuity and the notion of modulus of continuity  using only irreducible representations (without taking the closure) then we would obtain equivalent definitions as if we have used $\Sigma(\Aa)$. Thus the distinction whether we use a uniform structure defined with the help of $\Irr(\Aa)$ or $\Sigma(\Aa)$ is irrelevant in contrast to what happens for $\Sigma(\Aa)$ and $\Rep(\Aa)$.

We will describe how these two are related in the commutative case. We will also give a interpretation of $\Rep(\Aa)$ in the end of the paper.  

\begin{df}
We say that a $C^*$-algebra $\Aa$ has \textit{Ascoli property}, if the following condition holds: any subset $L \subset \Aa$ is relatively compact if and only if it satisfies the conditions:
\begin{enumerate}[({A}sc1)]
\item $L$ is bounded,
\item $L$ is pointwise relatively compact, i.e. for any irreducible representation $\pi$ the set $\pi(L)=\{\pi(a): a \in L\}$ is relatively compact in $\| \cdot \|$,
\item $L$ is equicontinuous.
\end{enumerate}
We say that $\Aa$ has \textit{strong Ascoli property}, if the following condition holds: any subset $L \subset \Aa$ is relatively compact if and only if it satisfies:
\begin{enumerate}[(s{A}sc1)]
\item $L$ is bounded,
\item $L$ is equicontinuous.
\end{enumerate}
\end{df}
\begin{rms} Directly from the definition strong Ascoli property implies Ascoli property.  \\
Obviously the essence of the above definition is that those conditions are \textit{sufficient} for compactness. This is because every compact set has all these properties---boundedness and pointwise relative compactness are obvious. For the equicontinuity note that if $L$ is compact then we can consider compact generating set $K$ such that $L \subset K$. Then for any $x \in L$ and $\pi,\pi' \in \Rep(\Aa)$ we have
$$\|\pi(x)-\pi'(x)\| \leq \sup_{y \in L} \|\pi(y)-\pi'(y)\| \leq \sup_{y \in K} \|\pi(y)-\pi'(y)\|=d_K(\pi,\pi'),$$
which proves equicontinuity with respect to $\Rep(\Aa)$ and therefore also with respect to $\Sigma(\Aa)$.
\end{rms}

\begin{exs} 1. Suppose that $\Aa,\Bb$ have Ascoli property. We will show that $\Aa \oplus \Bb$ also has Ascoli property. If $K_1,K_2$ are compact generating sets for $\Aa,\Bb$ (respectively) then the set $K:=K_1 \times \{0\} \cup \{0\} \times K_2$ is compact and generates $\Aa \oplus \Bb$. Suppose that $L \subset \Aa \oplus \Bb$ is bounded, pointwise relatively compact and equicontinuous (with respect to $d_{K}$) and denote by $p_1:\Aa \oplus \Bb \to \Aa$ and $p_2:\Aa \oplus \Bb \to \Bb$ the natural projections. Then obviously $p_i(L), i=1,2$ are bounded; for every irreducible representation $\pi$ of $\Aa$ we can consider $\tilde{\pi}(a,b):=\pi(a)$---then $\tilde{\pi}$ is irreducible and the set $\tilde{\pi}(L)=\pi\big(p_1(L)\big)$ is relatively compact. The same argument applies to $p_2$. Fix $\eps>0$ and choose $\delta>0$ from the definition of equicontinuity of $L$ such that for any $\pi,\pi' \in \Irr(\Aa \oplus \Bb)$ such that $d_{K}(\pi,\pi') < \delta$ we have $\|\pi(a,b)-\pi'(a,b)\|<\eps$. Note that if $\pi_1,\pi_2 \in \Irr(\Aa)$ and $\tilde{\pi_1},\tilde{\pi_2}$ denote natural extensions of $\pi_1,\pi_2$ to $\Aa \oplus \Bb$ then the following holds:
\begin{gather*}
d_{K_1}(\pi_1,\pi_2)=\sup\{\|\pi_1(a)-\pi_2(a)\|: a \in K_1\}= \\
=\sup\{\|\tilde{\pi_1}(a,b)-\tilde{\pi_2}(a,b) \|: (a,b) \in K \}=d_{K}(\tilde{\pi_1},\tilde{\pi_2}),
\end{gather*}
therefore the same $\delta$ is good for the equicontinuity of $p_1(\Aa \oplus \Bb)$ (for $p_2$ we argue similarly). From the fact that $\Aa$ and $\Bb$ have Ascoli property we conclude that $p_1(L), p_2(L)$ are relatively compact, thus $p_1(L) \oplus p_2(L)$ also and hence $L \subset p_1(L) \oplus p_2(L)$ also. \par

2. Consider $\Aa=\Kk(\ell^2)^+$, the unital $C^*$-algebra generated by compact operators.  Then $\Aa$ is not compact: let $K$ be a compact generating set for $\Aa$, containing $\{\frac{1}{n}P_n: n \in \NNN\}$ where $P_n$ is the projection onto $\lin\{e_n\}$ where $e_n$ is the $n$-th vector from the canonical basis of $\ell^2$. Consider unitaries $U_{n,m}$ defined on the orthonormal basis by the formula:
$$U_{n,m}(e_k)=
\begin{cases}
e_k, \ k \neq n \ i \ k \neq m, \\
e_n, \ k=m, \\
e_m, \ k=n
\end{cases}$$
They satisfy $U_{n,m}=U_{n,m}^*=U_{n,m}^{-1}$. Consider irreducible representations:
$$\pi_{n,m}(A):=U_{n,m}AU_{n,m} \ (=U_{n,m}AU_{n,m}^{-1}).$$
Then for $m \neq m', m,m'>1$ we infer
\begin{gather*}
\pi_{1,m}(P_1)e_m=U_{1,m}P_1U_{1,m}e_m=U_{1,m}P_1e_1=U_{1,m}e_1=e_m, \\
\pi_{1,m'}(P_1)e_m=U_{1,m'}P_1U_{1,m'}e_m=U_{1,m'}P_1e_m=0.
\end{gather*}
Thus
$$d_K(\pi_{1,m},\pi_{1,m'}) \geq \|\pi_{1,m}(P_1)-\pi_{1,m'}(P_1)\| \geq 1,$$
hence the sequence $\{\pi_{1,n}\}_{n \geq 2}$ does not have a convergent subsequence. \\
The key in the above argument is the fact that we can consider \textit{equivalent}, but still \textit{distinct} representations: the algebra $\Kk(\ell^2)$ possess up to unitary equivalence \textit{exactly one} irreducible representation (see e.g. Corollaries from Theorem 1.4.4. in \cite{Arv}.) \par
It can be shown that $\Sigma(\Aa)$ is not only noncompact, but is not even \textit{locally compact}. Indeed, every irreducible representation $\pi \in \Rep(\Aa)$ is of the form $\pi_U(\cdot)=U(\cdot)U^{-1}$ for some $U \in \Uu(\ell^2)$. Thus $\Sigma_0(\Aa)$ is a continuous image of $\Uu(\ell^2)$ through the mapping $U \mapsto \pi_U$---since $\Uu(\ell^2)$ is connected, then $\Sigma_0(\Aa)$ also and therefore so is $\Sigma(\Aa)$. Suppose that $\Sigma(\Aa)$ is locally compact: then from the Aleksandrov's Theorem (being locally compact, connected metric space) it is necessarily separable and therefore $\sigma$-compact. Therefore the space $X$ defined as the (continuous) image of $\Sigma(\Aa)$ through the map $\pi \mapsto \pi(P_1)\xi$ (where $\xi$ is some fixed unit vector) is also $\sigma$-compact: but one can show that $X$ is a sphere in $\ell^2$. It remains to note that the sphere $S$ in $\ell^2$ is not $\sigma$-compact: if it would be the case then also the \textit{ball} $B$ in $\ell^2$ would be $\sigma$-compact, as the image through the continuous mapping $S \times [0,1] \ni (\xi,t) \mapsto t\xi \in B$. But if $B=\bigcup_{n=1}^{\oo}K_n$ where $K_n$ are compact, then from the Baire's Theorem one of $K_n$'s must have nonempty interior which is a contradiction. This argument shows that $\Sigma(\Aa)$ is not locally compact. \par
But $\Aa$ still has Ascoli property: indeed, suppose that $L$ is pointwise relatively compact and consider the (irreducible) identity representation  $\iota:\Aa \hookrightarrow \Bb(\ell^2)$. Then $L=\iota(L)$ is relatively compact. \par
3. Let $S$ be the unilateral shift operator i.e. $S:\ell^2 \to \ell^2$ is given by the formula  $S(x_1,x_2,...):=(0,x_1,x_2,...)$. Then $S$ is a nonunitary isometry---in particular $S$ is not a normal operator. Thus the $C^*$-algebra generated by $S$, $\Aa:=C^*(S)$ is noncommutative. It is not so hard to show that $\Aa$ contains all compact operators: it turns out that $\Aa$ is an extension of $C(\TTT)$ by $\Kk(\ell^2)$ i.e. we have a following short exact sequence of $C^*$-algebras (see e.g. \cite{Cob}):
$$ 0 \longrightarrow \Kk(\ell^2) \longrightarrow  \Aa \longrightarrow  C(\TTT) \longrightarrow  0.$$
Therefore, as for $\Kk(\ell^2)$, the identity representation is irreducible which allows us to conclude that $\Aa$ has Ascoli property. Moreover for a compact generating set for $\Aa$ one can take $K_1=K \cup \{S\}$ where $K$ is defined as in the previous example---then similarly as before we conclude that $\Aa$ is not compact.
\end{exs}

\begin{rem} The above example can be modified in such a way that we can get the ,,scattered'' sequence not only in the norm but also in the SOT topology (which is weaker then the norm topology). It suffices to take for example:
$$U_{n}(e_k)=
\begin{cases}
e_k, \ k \neq 1 \ i \ k \neq n, \\
\frac{1}{\sqrt{2}}(e_1-e_n) , \ k=n, \\
\frac{1}{\sqrt{2}}(e_1+e_n) \ k=1
\end{cases}$$
\end{rem}

We would like to generalize the argument given above and show that if $\Aa$ is compact then every irreducible representation of $\Aa$ is finite dimensional.
For our further purposes we introduce the following:
\begin{df}
\textit For $T \in \Bb(\HHh)$ we define its \textit{orbit} as a following set:
$$\Orb(T):=\{UTU^{-1}: U \in \Uu(\HHh)\}.$$
\end{df}

\begin{thm}
Let $\HHh$ be an infinite dimensional Hilbert space and $T \in \Bb(\HHh)$. The following conditions are equivalent:
\begin{enumerate}
\item $\cl{\Orb(T)}$ is compact in $\| \cdot \|$;
\item $\cl{\Orb(T)}^{SOT}$ is SOT-compact;
\item there exists $\lambda \in \CCC$ such that $T=\lambda I$.
\end{enumerate}
\end{thm}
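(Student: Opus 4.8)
The plan is to prove the cycle of implications $(3)\Rightarrow(1)\Rightarrow(2)\Rightarrow(3)$, with the last implication carrying essentially all of the content.

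The implication $(3)\Rightarrow(1)$ is immediate: if $T=\lambda I$ then $UTU^{-1}=\lambda I$ for every $U\in\Uu(\HHh)$, so $\Orb(T)=\{\lambda I\}$ is a single point and $\cl{\Orb(T)}$ is norm-compact. For $(1)\Rightarrow(2)$ I would argue purely at the level of topologies: the SOT is a Hausdorff topology on $\Bb(\HHh)$ coarser than the norm topology, so the identity map from $\cl{\Orb(T)}$ with the norm topology onto the same set carrying the SOT is a continuous bijection from a compact space to a Hausdorff space, hence a homeomorphism. Thus $\cl{\Orb(T)}$ is SOT-compact, in particular SOT-closed; since it contains $\Orb(T)$ it contains $\cl{\Orb(T)}^{SOT}$, and the reverse inclusion is automatic, so $\cl{\Orb(T)}^{SOT}=\cl{\Orb(T)}$ is SOT-compact.

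The heart of the matter is $(2)\Rightarrow(3)$, which I would prove by contraposition. Suppose $T$ is not a scalar operator. Then there is a unit vector $\xi$ with $\xi$ and $T\xi$ linearly independent; otherwise $T\zeta\in\CCC\zeta$ for every $\zeta$, and applying $T$ to a sum of two linearly independent vectors forces $T=\lambda I$. Set $\eta:=T\xi-\langle T\xi,\xi\rangle\xi$, so $\eta\perp\xi$ and $\beta:=\|\eta\|>0$. Since $\HHh$ is infinite-dimensional, $\xi^{\perp}$ is infinite-dimensional and contains an orthonormal sequence $(e_n)_{n\in\NNN}$. For each $n$ pick $U_n\in\Uu(\HHh)$ that fixes $\xi$ and maps $\eta/\beta$ to $e_n$ (take the identity on $\CCC\xi$ and, on $\xi^{\perp}$, any unitary carrying the unit vector $\eta/\beta$ to the unit vector $e_n$). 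Then $U_n^{-1}\xi=\xi$, so
$$U_nTU_n^{-1}\xi=U_nT\xi=U_n\big(\langle T\xi,\xi\rangle\xi+\eta\big)=\langle T\xi,\xi\rangle\xi+\beta e_n,$$
whence $\|U_nTU_n^{-1}\xi-U_mTU_m^{-1}\xi\|=\beta\|e_n-e_m\|=\beta\sqrt{2}$ for $n\neq m$. Now the evaluation map $\Phi\colon\Bb(\HHh)\to\HHh$, $\Phi(S)=S\xi$, is continuous from the SOT to the norm topology, directly from the definition of the SOT. If $\cl{\Orb(T)}^{SOT}$ were SOT-compact, then $\Phi\big(\cl{\Orb(T)}^{SOT}\big)$ would be a norm-compact subset of $\HHh$ containing the infinite, $\beta\sqrt{2}$-separated set $\{U_nTU_n^{-1}\xi:n\in\NNN\}$; but an infinite $\beta\sqrt{2}$-separated set has no accumulation point, contradicting the limit-point compactness of a compact metric space. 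Hence $T$ is a scalar.

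I expect the only genuine obstacle to be isolating the right ``scattered'' family inside $\Orb(T)$; the point is that already the orbit of $T$ evaluated at a suitable unit vector $\xi$ — chosen so that $\xi$ and $T\xi$ are linearly independent — meets a sphere of positive radius $\|\eta\|$ in the infinite-dimensional subspace $\xi^{\perp}$ in an infinite separated set, and such a set is never relatively norm-compact. This is exactly the mechanism appearing in the earlier example built from $\Kk(\ell^2)^{+}$, now carried out for an arbitrary non-scalar operator, and it simultaneously rules out SOT-compactness (which is formally weaker) because evaluation at $\xi$ is SOT-continuous.
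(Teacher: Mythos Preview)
Your proof is correct and follows essentially the same approach as the paper: both reduce to finding, for a non-scalar $T$, a unit vector $\xi$ with $\xi,T\xi$ independent and then constructing unitaries $U_n$ fixing $\xi$ and sending the normalized orthogonal component of $T\xi$ to an orthonormal sequence, yielding a $\sqrt{2}$-separated set in $\Orb(T)\xi$ that obstructs SOT-compactness via evaluation at $\xi$. The only cosmetic difference is that the paper uses explicit transposition unitaries swapping $e_2$ and $e_n$ in a completed orthonormal system, while you take arbitrary unitaries of $\xi^{\perp}$ sending $\eta/\beta$ to $e_n$; your treatment of $(1)\Rightarrow(2)$ is also spelled out more carefully than the paper's one-line appeal to SOT being coarser.
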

\begin{proof}
If $T=\lambda I$ then $\cl{\Orb(T)}$ consists from one element. \par
Thus let us assume that $T$ is not of the form $\lambda I$. Then there exists $\xi \in \HHh$, such that $T\xi$ and $\xi$ are linearly independent. We can assume without the loss of generality that $\|\xi\|=1$. Put
$$\begin{cases}
 e_1:=\xi  \\
 e_2:=\alpha \xi+\beta T\xi \\
\end{cases}$$
where $\alpha,\beta \in \CCC$ are chosen in such a way that $\|e_2\|=1$ and $e_1 \perp e_2$. In particular $\beta \neq 0$ and thus we can compute $Te_1=ae_1+be_2$ for some $a,b \in \CCC$, $b \neq 0$. We can complete $\{e_1,e_2\}$ to an orthonormal system $\{e_n\}_{n \in \NNN}$. Let $U_n$ be the unitary exchanging $e_2$ and $e_n$. Then indeed $U_n^{*}=U_n=U_n^{-1}$ and
$U_nTU_ne_1=ae_1+be_n$ whence we get:
$$\|(U_nTU_n^{-1}-U_mTU_m^{-1})e_1\|=\|be_n-be_m\|=\sqrt{2}|b|>0.$$
This contradicts the fact that $\Orb(T)$ is relatively SOT-compact. \\
The remaining part follows from the fact that SOT topology is weaker than the norm topology.
\end{proof}

\begin{rms} 1. Of course if $\dim{\HHh}< \oo$ then both topologies: the norm topology and the SOT topology coincide. Moreover, as $\Orb(T)$ is always a bounded set, thus, being finite dimensional, $\cl{\Orb(T)}$ is compact for every $T \in \Bb(\HHh)$. Note also that the mapping:
$$ \Uu(\HHh) \ni U \mapsto UTU^{-1} \in \Bb(\HHh)$$
is continuous and maps the compact topological group $\Uu(\HHh)$ onto $\Orb(T)$. In particular $\Orb(T)$ is automatically closed. \par
2. Note that the above proof shows slightly more: if $T$ is not of the form $\lambda I$ then there exists a vector $\xi \in \HHh$ such that the set $\Orb(T)\xi \subset \HHh$ is not relatively compact. But in fact the following result, which proof reminds the proof of Banach-Alaoglu theorem, is valid: (see also e.g. the proof of Theorem \ref{subh}):
\begin{thm}
A set $\SSs \subset \Bb(\HHh)$ is relatively SOT-compact if and only if for any $\xi \in \HHh$ the set $\SSs\xi$ is relatively compact in $(\HHh,\|\cdot\|)$.
\end{thm}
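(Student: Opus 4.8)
The plan is to identify $\Bb(\HHh)$, equipped with the SOT, with a subspace of the (enormous) product space $\prod_{\xi \in \HHh}(\HHh,\|\cdot\|)$ via the evaluation embedding $\Phi(T):=(T\xi)_{\xi \in \HHh}$, and then to run a Tychonoff/Banach--Alaoglu-type argument, exactly in the spirit indicated after the statement. For the easy implication, suppose $\cl{\SSs}^{SOT}$ is SOT-compact. For a fixed $\xi\in\HHh$ the map $\textup{ev}_\xi\colon(\Bb(\HHh),SOT)\to(\HHh,\|\cdot\|)$, $T\mapsto T\xi$, is continuous by the very definition of the SOT; hence $\textup{ev}_\xi\big(\cl{\SSs}^{SOT}\big)$ is a norm-compact, in particular norm-closed, subset of $\HHh$ which contains $\SSs\xi$. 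Therefore $\cl{\SSs\xi}^{\|\cdot\|}$ is a closed subset of a compact set, hence compact.

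For the converse, assume $\SSs\xi$ is relatively norm-compact for every $\xi$. Then $\sup_{T\in\SSs}\|T\xi\|<\oo$ for each $\xi$, so the Banach--Steinhaus theorem gives $M:=\sup_{T\in\SSs}\|T\|<\oo$. Consider $\Phi\colon\Bb(\HHh)\to\prod_{\xi\in\HHh}\HHh$, $\Phi(T)=(T\xi)_{\xi}$, with each factor carrying the norm topology. By definition the SOT is the coarsest topology making all the $\textup{ev}_\xi$ continuous, that is, exactly the topology that $\Phi$ pulls back from the product topology; since $\Phi$ is injective, it is a homeomorphism onto its image. Now $\Phi(\SSs)\subset P:=\prod_{\xi\in\HHh}\cl{\SSs\xi}^{\|\cdot\|}$, and $P$ is compact by Tychonoff, being a product of norm-compact sets; hence the closure of $\Phi(\SSs)$ in the product is a compact subset of $P$.

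It remains to see that this closure cannot leave $\Phi(\Bb(\HHh))$. Let $z=(z_\xi)_\xi$ belong to it and pick a net $T_\sigma\in\SSs$ with $T_\sigma\zeta\to z_\zeta$ in norm for every $\zeta\in\HHh$. Then $z_{a\xi+b\eta}=\lim_\sigma T_\sigma(a\xi+b\eta)=a z_\xi+b z_\eta$ and $\|z_\xi\|=\lim_\sigma\|T_\sigma\xi\|\le M\|\xi\|$, so $\xi\mapsto z_\xi$ defines a bounded operator $T$ with $\|T\|\le M$ and $\Phi(T)=z$. Thus the product-closure of $\Phi(\SSs)$ coincides with the closure of $\Phi(\SSs)$ taken inside $\Phi(\Bb(\HHh))$, which is therefore compact; transporting it back through the homeomorphism $\Phi$ shows that $\cl{\SSs}^{SOT}$ is compact.

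The only point that is not purely formal — and the thing one really has to be careful about — is precisely this last step: that a pointwise limit (along a net) of operators of norm $\le M$ is again a bounded operator, so that the compact product-closure does not escape the image of $\Phi$. This is exactly where the uniform bound $M$ furnished by Banach--Steinhaus is used; without the relative compactness (hence boundedness) of each orbit $\SSs\xi$ there would be no such $M$, and indeed no reason for the limit to be an operator at all.
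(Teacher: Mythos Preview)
Your proof is correct and follows precisely the route the paper indicates: the paper does not spell out a proof but says it ``reminds the proof of Banach--Alaoglu theorem'' and refers to the proof of Theorem~\ref{subh}, which is exactly the evaluation-embedding-into-a-Tychonoff-product argument you carry out, including the verification that the pointwise limit stays in the image. Your explicit invocation of Banach--Steinhaus to secure the uniform bound $M$ is the one ingredient the paper leaves implicit.
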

3. If $\dim \HHh=\oo$, $\HHh$ is separable and $T \in \Bb(\HHh)$ is normal then the following result is true (see e.g.. Theorem 1.1. in \cite{She} and references therein):
\begin{thm}
With the above assumptions, the following conditions are equivalent:
\begin{itemize}
\item $S \in \cl{\Orb(T)}$,
\item $S$ is normal, $\sigma(S)=\sigma(T)$ and for any isolated eigenvalue $\lambda \in \sigma(S)$ we have:
$$\dim \ker(T-\lambda I)=\dim \ker(S-\lambda I).$$
\end{itemize}
\end{thm}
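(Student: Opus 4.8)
\emph{Proof proposal.} I would prove the two implications separately, the first being a soft perturbation argument and the second relying on the Weyl--von Neumann--Berg circle of ideas.

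\textbf{$(1)\Rightarrow(2)$.} Pick unitaries $U_n$ with $\|U_nTU_n^{-1}-S\|\to 0$. Since $\{N\in\Bb(\HHh):N^*N=NN^*\}$ is norm-closed, $S$ is automatically normal. For the spectrum, recall that for normal operators $A,B$ one has $\sigma(A)\subseteq\{z:\textup{dist}(z,\sigma(B))\le\|A-B\|\}$, because $\|(B-z)^{-1}\|=\textup{dist}(z,\sigma(B))^{-1}$ and a Neumann--series perturbation shows $B-z$ invertible forces $A-z$ invertible once $\|A-B\|<\textup{dist}(z,\sigma(B))$; applying this symmetrically to $U_nTU_n^{-1}$ (spectrum $\sigma(T)$) and $S$ gives that the Hausdorff distance between $\sigma(T)$ and $\sigma(S)$ is at most $\|U_nTU_n^{-1}-S\|\to 0$, hence $\sigma(S)=\sigma(T)$. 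Finally, fix an isolated point $\lambda$ of this common spectrum and a small circle $\gamma$ of radius less than $\textup{dist}(\lambda,\sigma(T)\setminus\{\lambda\})$. The Riesz idempotents $E_n:=\frac{1}{2\pi i}\oint_\gamma(z-U_nTU_n^{-1})^{-1}\,dz=U_n\big(\frac{1}{2\pi i}\oint_\gamma(z-T)^{-1}\,dz\big)U_n^{-1}$ converge in norm to the Riesz idempotent $E$ of $S$ at $\lambda$, since the resolvents converge uniformly on $\gamma$ (their norms on $\gamma$ being uniformly bounded for large $n$). For normal operators these idempotents are the orthogonal spectral projections onto $\ker(\cdot-\lambda I)$; as projections at norm-distance $<1$ are unitarily equivalent, we obtain $\dim\ker(T-\lambda I)=\textup{rank}\,E_n=\textup{rank}\,E=\dim\ker(S-\lambda I)$ for $n$ large.

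\textbf{$(2)\Rightarrow(1)$.} First note that the hypotheses already force $\sigma_{\textup{ess}}(S)=\sigma_{\textup{ess}}(T)$: for a normal operator the essential spectrum is the full spectrum minus the isolated eigenvalues of finite multiplicity, and both the spectrum and the (finite) multiplicities at isolated points are assumed to coincide. I would then invoke the Weyl--von Neumann--Berg theorem: every normal operator on a separable Hilbert space is, up to an arbitrarily small norm perturbation, unitarily equivalent to a diagonalizable normal operator whose diagonal sequence can be prescribed to be \emph{any} sequence $(\mu_k)$ whose closure equals the spectrum, which clusters at every point of the essential spectrum, and which exhibits the prescribed finite multiplicities at the isolated eigenvalues. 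Using this for both $S$ and $T$ with the \emph{same} target data, fix $\eps>0$ and produce diagonalizations $D_S=\textup{diag}(\alpha_k)$ of $S$ and $D_T=\textup{diag}(\beta_k)$ of $T$ with $\|S-D_S\|,\|T-D_T\|<\eps/3$ and with $(\alpha_k),(\beta_k)$ equivalent in the above sense. It then suffices to find a bijection $\phi:\NNN\to\NNN$ with $\sup_k|\alpha_k-\beta_{\phi(k)}|<\eps/3$: the permutation unitary $U$ implementing $\phi$ conjugates $D_T$ to $\textup{diag}(\beta_{\phi(k)})$, so $\|D_S-UD_TU^{-1}\|<\eps/3$ and $\|S-UTU^{-1}\|<\eps$, and letting $\eps\to0$ places $S$ in $\cl{\Orb(T)}$.

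\textbf{Main obstacle.} The construction of the matching $\phi$ is the crux. I would build it by a back-and-forth argument: alternately pair an as-yet-unmatched $\alpha_k$ with an unused $\beta_j$ within distance $\eps/3$, and conversely. The density of $\{\alpha_k\}$ and $\{\beta_k\}$ in the common spectrum, together with the clustering of each sequence at every essential-spectrum point, guarantees that an admissible partner always exists; the isolated eigenvalues require separate bookkeeping, and this is exactly where the hypothesis $\dim\ker(T-\lambda I)=\dim\ker(S-\lambda I)$ enters, since near such a $\lambda$ there are only finitely many terms of each sequence, in equal number, which can be matched bijectively among themselves. The delicate point is to carry out this bookkeeping so that \emph{all} matched pairs remain within $\eps/3$ simultaneously. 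Alternatively, one can bypass the explicit matching entirely by applying Voiculescu's noncommutative Weyl--von Neumann theorem to the representations of $C(\sigma(T))$ determined by $S$ and $T$, reading off approximate unitary equivalence from equality of kernels (i.e. of spectra) and of multiplicities on the atomic part.
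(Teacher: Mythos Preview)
The paper does not prove this theorem; it is quoted as a known result with a reference to Sherman \cite{She} (and, implicitly, to the classical Berg/Weyl--von Neumann circle of ideas). So there is no ``paper's own proof'' to compare your attempt against.

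That said, your proposal is essentially the standard route and the forward direction $(1)\Rightarrow(2)$ is clean. A small caution on $(2)\Rightarrow(1)$: the formulation of Weyl--von Neumann--Berg you invoke is overstated. The theorem gives you \emph{some} diagonal approximant of $S$ (and of $T$), not one with a \emph{prescribed} diagonal sequence; the freedom to realize ``any'' admissible sequence as the diagonal is precisely the approximate unitary equivalence you are trying to prove, so using it at that stage would be circular. What you actually have after WvNB are two diagonal operators $D_S,D_T$ whose eigenvalue sequences share the same essential spectrum and have, near each isolated eigenvalue of finite multiplicity, the correct (equal) number of entries---but those entries need not sit exactly at $\lambda$, only within $\eps/3$ of it. Your back-and-forth matching then has to be carried out for these perturbed sequences, and the bookkeeping (ensuring every pair stays within $\eps/3$, handling the finitely many terms near each isolated $\lambda$ separately) is genuinely the content of the argument. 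You correctly flag this as the crux. The alternative via Voiculescu's theorem applied to the two representations of $C(\sigma(T))$ is indeed the cleanest way to bypass the explicit combinatorics, and is how the result is often packaged in the modern literature.
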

Therefore we see that in general $\Orb(T)$ need not to be closed. \par
4. There exist several different topologies on $\Bb(\HHh)$ like for instance:  ultraweak topology (also called $\sigma$-weak), ultrastrong (also called $\sigma$-strong) and also *-ultrastrong and *-ultraweak (see e.g. \cite{Bla}, \cite{Sak},). It turns out that on the set $\Uu(\HHh)$ all these topologies coincide and are equal to the SOT (or equivalently WOT) topology (see \cite{Bla}, Prop. I.3.2.9). However the closures with respect to these topologies may differ: we have $$\cl{\Uu(\HHh)}^{\| \cdot \|}=\Uu(\HHh), $$ $$\cl{\Uu(\HHh)}^{SOT}=\{T \in \Bb(\HHh): T \ \textup{is an isometry}\},$$ $$\cl{\Uu(\HHh)}^{WOT}=\{T \in \Bb(\HHh): \|T\| \leq 1\}.$$
\end{rms}
\begin{thm}\label{zwarta} If $\Aa$ is a compact $C^*$-algebra and $\pi:\Aa \to \Bb(\HHh_{\pi})$ is an irreducible representation then $\dim \HHh_{\pi}<\oo$.
\end{thm}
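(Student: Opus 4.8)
The plan is to argue by contradiction and to transport the unitary-orbit construction of the preceding theorem into the space $\Sigma(\Aa)$. Suppose $\dim\HHh_\pi=\oo$. First I would reduce to the unital case: replace $\Aa$ by its unitization $\Aa^+$ and $\pi$ by its unique extension to a unital representation of $\Aa^+$, which is again irreducible and acts on the same Hilbert space; since $\Aa$ is compact exactly when $\Aa^+$ is, nothing is lost, so from now on $\Aa$ is unital and $\pi(1)=I$. As recalled earlier, the space of an irreducible representation of a separable algebra is finite-dimensional or separable infinite-dimensional, so in our situation a unitary change of coordinates lets us take $\HHh_\pi=\ell^2$; then $\pi$ itself lies in $\Sigma^0_{\oo}(\Aa)\subset\Sigma(\Aa)$, with no amplification needed.

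Next I would manufacture an infinite, uniformly separated subset of $\Sigma(\Aa)$. Since $\pi$ is irreducible and $\dim\ell^2\geq2$, we cannot have $\pi(\Aa)=\CCC I$ (the scalars act reducibly on any space of dimension at least two), so there is $a\in\Aa$ with $T:=\pi(a)$ not a scalar multiple of the identity. I then invoke the construction from the proof of the orbit theorem above, applied to this $T$: it produces an orthonormal sequence $(e_n)_n$ in $\ell^2$ and a scalar $b\neq0$ such that, with $U_n\in\Uu(\ell^2)$ the unitary transposing $e_2$ and $e_n$, one has $\|(U_nTU_n^{-1}-U_mTU_m^{-1})e_1\|=\sqrt2\,|b|$ for all $n\neq m$, hence $\|U_nTU_n^{-1}-U_mTU_m^{-1}\|\geq\sqrt2\,|b|$ whenever $n\neq m$. (Alternatively one could quote the remark there that $\Orb(T)\xi$ is not relatively compact for a suitable $\xi$.)

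I would then set $\pi_n:=U_n\pi(\cdot)U_n^{-1}$; each $\pi_n$ is irreducible and acts on $\ell^2$, hence $\pi_n\in\Sigma^0_{\oo}(\Aa)\subset\Sigma(\Aa)$. By Theorem~\ref{rownowazne} the topology of $\Sigma(\Aa)$, and hence the question of its compactness, is independent of the chosen compact generating set, so I may fix a compact generating set $K$ with $a\in K$ (simply adjoin the point $a$ to any compact generating set). Then for $n\neq m$ one has $d_K(\pi_n,\pi_m)\geq\|\pi_n(a)-\pi_m(a)\|=\|U_nTU_n^{-1}-U_mTU_m^{-1}\|\geq\sqrt2\,|b|$, so $\{\pi_n:n\in\NNN\}$ is an infinite $\sqrt2\,|b|$-separated subset of the metric space $(\Sigma(\Aa),d_K)$ and therefore has no convergent subsequence. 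Since $\Sigma(\Aa)$ is metrizable, it is not compact, contradicting the hypothesis that $\Aa$ is compact. Hence $\dim\HHh_\pi<\oo$.

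The substantive mathematical content---that a non-scalar bounded operator on an infinite-dimensional Hilbert space has a unitary orbit that is not relatively compact in norm---is supplied in full by the orbit theorem above, so I expect no real obstacle there. The delicate parts are organizational: one must check that passing to the unitization preserves both irreducibility and the standing compactness hypothesis (so that $\Sigma(\Aa)$, defined only for unital algebras, becomes available), and that the element $a$ witnessing non-scalarity can be placed inside a compact generating set so that the $\sqrt2\,|b|$-separation of the $\pi_n$ is actually registered by $d_K$. The latter point is the one I would treat most carefully, although it is settled simply by adjoining $a$ to any compact generating set and invoking Theorem~\ref{rownowazne}.
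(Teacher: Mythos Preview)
Your argument is correct and follows essentially the same idea as the paper: both exploit that the unitary conjugates $\pi_U:=U\pi(\cdot)U^{-1}$ lie in $\Sigma(\Aa)$ and then invoke the orbit theorem to derive a contradiction from $\dim\HHh_\pi=\oo$. The only difference is packaging: the paper observes that for \emph{every} $a\in\Aa$ the continuous map $\hat a:\Sigma(\Aa)\to\Bb(\ell^2)$ sends the compact set $\Sigma(\Aa)$ onto a compact set containing $\Orb(\pi(a))$, whence $\pi(a)$ is scalar for all $a$ by the orbit theorem, contradicting irreducibility---this sidesteps your (perfectly legitimate) step of adjoining a specific witness $a$ to the generating set $K$.
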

\begin{proof}
For $U \in \Uu(\HHh_{\pi})$ denote $\pi_U:=U\pi U^{-1}$. This formula defines an irreducible representation. Since $\Aa$ is compact then $\Sigma(\Aa)$ is a compact space thus, in particular, the family $\{\pi_U(a): U \in \Uu(\HHh_{\pi})\}$ is relatively compact for each $a \in \Aa$. But if we put $A:=\pi(a)$ then this family is equal to $\Orb(A)$ thus it must be satisfied $\dim \HHh_{\pi}<\oo$.
\end{proof}

\begin{cor} If $\Aa$ is an infinite dimensional and simple $C^*$-algebra then $\Aa$ has Ascoli property and is not compact.
\end{cor}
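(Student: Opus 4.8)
The plan is to treat the two assertions separately, and in both to exploit the elementary fact that in a \emph{simple} $C^*$-algebra every irreducible representation is faithful: the kernel of a nonzero $*$-homomorphism is a proper closed two-sided ideal, hence $\{0\}$ when $\Aa$ is simple. Since every nonzero $C^*$-algebra admits a pure state and hence, via the GNS construction, an irreducible representation, I would fix one such representation $\pi_0:\Aa \to \Bb(\HHh_{\pi_0})$. It is automatically faithful, hence isometric, so its image $\pi_0(\Aa)$ is a norm-closed $C^*$-subalgebra of $\Bb(\HHh_{\pi_0})$ isometrically $*$-isomorphic to $\Aa$, and $\pi_0$ is a homeomorphism of $\Aa$ onto $\pi_0(\Aa)$ with the subspace norm topology.

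For the Ascoli property, recall that the implication ``relatively compact $\Rightarrow$ (Asc1), (Asc2), (Asc3)'' is the general remark following the definition, so only the converse needs proof; in fact I claim condition (Asc2) alone already forces relative compactness. If $L \subset \Aa$ satisfies (Asc2), then in particular $\pi_0(L)$ is relatively compact in $\Bb(\HHh_{\pi_0})$; since $\pi_0(\Aa)$ is norm-closed and contains $\pi_0(L)$, the norm closure $\cl{\pi_0(L)}$ is a compact subset of $\pi_0(\Aa)$, and applying the homeomorphism $\pi_0^{-1}$ gives that $\cl{L} = \pi_0^{-1}\big(\cl{\pi_0(L)}\big)$ is compact, i.e. $L$ is relatively compact. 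A fortiori the conjunction (Asc1)$\wedge$(Asc2)$\wedge$(Asc3) implies relative compactness, so $\Aa$ has the Ascoli property (this is exactly the mechanism used for $\Kk(\ell^2)^+$ and $C^*(S)$ in the examples, now available because simplicity supplies the faithful irreducible representation).

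For the failure of compactness I would argue by contraposition from Theorem \ref{zwarta}. Since $\pi_0$ is faithful, $\pi_0(\Aa) \cong \Aa$ is infinite dimensional; but a representation on a space of finite dimension $n$ has image inside $\Bb(\HHh_{\pi_0}) \cong M_n$, which is finite dimensional, so necessarily $\dim \HHh_{\pi_0} = \oo$. If $\Aa$ is unital, Theorem \ref{zwarta} applied to $\Aa$ immediately rules out compactness of $\Aa$. If $\Aa$ is nonunital, extend $\pi_0$ to the irreducible representation $\pi_0^{+}$ of $\Aa^{+}$ on the same space $\HHh_{\pi_0}$; as $\dim \HHh_{\pi_0} = \oo$, Theorem \ref{zwarta} applied to the unital algebra $\Aa^{+}$ shows $\Aa^{+}$ is not compact, which by definition means $\Aa$ is not compact.

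There is essentially no hard step: the whole content is the observation that simplicity upgrades ``pointwise relatively compact at one irreducible representation'' to ``relatively compact'' (via faithful $\Rightarrow$ isometric $\Rightarrow$ closed embedding), and dually forces the representing Hilbert space of any irreducible representation to be infinite dimensional, which is incompatible with compactness by Theorem \ref{zwarta}. The only points deserving a word of justification are the standard facts that the range of a $*$-homomorphism of $C^*$-algebras is closed and that a faithful one is isometric.
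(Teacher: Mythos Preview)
Your proof is correct and follows essentially the same line as the paper's: simplicity forces every irreducible representation to be faithful (hence isometric), so condition (Asc2) applied to a single irreducible representation already yields relative compactness of $L$, giving the Ascoli property; and a faithful representation of an infinite-dimensional algebra must act on an infinite-dimensional Hilbert space, so Theorem~\ref{zwarta} rules out compactness. Your treatment of the nonunital case via $\Aa^{+}$ is extra care beyond what the paper does (the standing hypotheses are unital and separable), but it is correct and harmless.
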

\begin{proof} Since $\Aa$ is simple its every (nonzero) representation is faithful: in particular every irreducible representation is faithful---therefore $\Aa$ has Ascoli property. On the other hand, since $\Aa$ is infinite dimensional, hence does not have a finite dimensional (faithful) irreducible representation and therefore it cannot be compact.
\end{proof}

\begin{exm} We will describe a nontrivial example of a simple infinite dimensional $C^*$-algebra, defined via generators and relations. This algebra is called \textit{noncommutative torus} or \textit{(irrational) rotation algebra} and is usually denoted by $A_{\theta}$, where $\theta \in [0,1]$ is a (irrational) parameter. The algebra $A_{\theta}$ is defined as the \textit{universal} unital $C^*$-algebra generated by two unitaries $u,v$ satisfying $vu=e^{2\pi i\theta}uv$. Universality is understood as follows: whenever $\Aa$ is another unital $C^*$-algebra, containing two unitaries $U,V \in \Aa$ which satisfy $VU=e^{2\pi i \theta}UV$ then there is a morphism $\alpha:A_{\theta} \to \Aa$ such that $\alpha(u)=U, \alpha(v)=V$. If $\theta=0$ we obtain the $C^*$-algebra isomorphic to the algebra of continuous functions on two-torus $C(\TTT^2)$---this fact justifies the name ,,noncommutative torus''. For $\theta=\frac{p}{q}$ (where $p,q$ are relatively prime, $q>0$) there exists (complex) vector bundle $E$ of rank $q$ such that $A_{\theta} \cong \Gamma(\TTT^2,\textup{End}(E))$ (see e.g.\cite{Kha}, Prop. 1.1.1)---then $A_{\theta}$, although is noncommutative, is quite close to a commutative algebra (the precise notion is the \textit{Morita equivalence}---see e.g. \cite{Rae}, Chapter 3). Thus the most interesting case is when $\theta$ is irrational. Then it turns out that $A_{\theta}$ is a simple algebra (see Thm. VI.1.4 in \cite{Dav}). $A_{\theta}$ can be represented on $L^2(\SSS^1)$ as follows:
$$(Uf)(t)=e^{2\pi t}f(t), \ \  Vf(t)=f(t+\theta)$$
where the circle $\SSS^1$ is viewed as $\RRR / \ZZZ$ (to keep additive notation). In the light of the above results, $A_{\theta}$ (for irrational $\theta$) is not compact but has Ascoli property. More about $A_{\theta}$ and its various incarnations can be found in \cite{Kha}.
\end{exm}

\begin{exm} There are examples of unital $C^*$-algebras which are noncompact but such that every irreducible representation is finite dimensional. Below we give an example of such algebra. Put $\Aa:=\prod_{n=1}^{\oo}M_{2^n}$
and consider an ideal $\Ii \subset \Aa$ defined as $\Ii:=\bigoplus_{n=1}^{\oo}M_{2^n}$. Define also $\Ff=\{(A_n)_{n=1}^{\oo} \in \Aa: A_1 \ \textup{is a diagonal matrix}, \ A_{n+1}=A_n \oplus A_n, \ n \in \NNN \}$: then $\Ff$ is a commutative $C^*$-algebra, isomorphic to $\CCC \oplus \CCC$. It follows that $\Ii \cap \Ff=0$. Let $\Aa_0=\Ff \dotplus \Ii$. Since $\Ff$ is finite dimensional then $\Aa_0$ is closed in $\Aa$ hence $\Aa_0$ is also $C^*$-algebra. We will show that $\Aa_0$ is not compact but its every irreducible representation is finite dimensional. \\
We have the following short exact sequence of $C^*$-algebras:
$$0 \longrightarrow \Ii \longrightarrow \Aa_0 \longrightarrow \Ff \longrightarrow 0.$$
Since $\Ii=\bigoplus_{n=1}^{\oo}M_{2^n}$ hence $\Irr(\Ii)=\bigsqcup_n \Irr(M_{2^n})$---however $\Irr(M_k)=\{U(\cdot)U^{-1}: U \in \Uu_n\}$. Therefore every irreducible representation of $\Ii$ is finite dimensional. As $\Ff$ is of finite dimension then $\Ff$ also has this property. Since $\Aa_0$ is an extension of $\Ff$ by $\Ii$ then $\Aa_0$ also has the property of having finite dimensional irreducible representations. \par
However $\Aa_0$ is not compact: to see this let us fix $n \in \NNN$ and for an operator $U \in \Bb(\CCC^{2^n})$ consider the representation given by $\pi_{n,U}(a):=UA_nU^{-1}$ where $a=(A_n)_{n \in \NNN}, \ A_n \in M_{2^n}(\CCC)$. Viewing $\pi_{n,U}$ as a representation of $\Ii$ we conclude that it is irreducible. Therefore if we take $\pi_{n,U}$ defined by the same formula but on the whole $\Aa_0$ we also get an irreducible representation. Thus $\rho_n:=\aleph_0 \odot \pi_{n,U} \in \Sigma(\Aa_0)$.
Consider the matrix $V=
\left[\begin{array}{cccccccc}
0 & 1 \\
1 & 0  \\
\end{array}\right]$. Then $V$ is a real symmetric matrix satisfying $V^2=I$, so $V$ is a unitary and $V^{-1}=V$.
If $A=\left[\begin{array}{cccccccc}
a_{11} & a_{12} \\
a_{21} & a_{22}  \\
\end{array}\right]$,
then $VAV^{-1}=\left[\begin{array}{cccccccc}
a_{22} & a_{21} \\
a_{12} & a_{11}  \\
\end{array}\right]$.

Consider unitary operators on $C^{2^n}$ defined by the formula:
$$U_n=\underbrace{I_2 \oplus I_2 \oplus ... \oplus I_2}_{2^{n-1}-1} \oplus V$$
and take the associated irreducible representations $\pi_{n,U_n}$. As a compact set $K$ generating $\Aa_0$ take any compact set containing $a=(A,A \oplus A,A \oplus A \oplus A \oplus A,...) \in \Ff$ where for example $A=\left[\begin{array}{ccc}
1 & 0 \\
0 & -1 \\
\end{array}\right]$.
Assume for example that $n<m$. Then we have:
\begin{gather*}
d_K(\rho_n,\rho_m) \geq \|\rho_n(a)-\rho_m(a)\|=\|\aleph_0 \odot U_n(A^{\oplus 2^{n-1}})U_n-\aleph_0 \odot U_m(A^{\oplus 2^{m-1}})U_m\|= \\
=\| \aleph_0 \odot \Big((0_{2^n-2} \oplus B) \oplus (0_{2^n-2} \oplus B) \oplus ... \oplus (0_{2^n-2} \oplus B) \oplus 0_{2^n} \Big) \|= \\
=\|(0_{2^n-2} \oplus B)\|=\|B\|
\end{gather*}
where $B=VAV^{-1}-A=\left[\begin{array}{cccccccc}
-2 & 0 \\
 0 & 2 \\
\end{array}\right].$
In this way we obtain infinite discrete subset of $\Sigma(\Aa_0)$: thus $\Aa_0$ cannot be compact.
\end{exm}

\begin{rem}
The ideal $\Ii$ above is an example of shrinking (nonunital) $C^*$-algebra, therefore is compact. Obviously $\Ff$ is also compact (being commutative or being finite dimensional algebra).
Therefore the above example shows that compactness is not preserved by taking extensions: in fact it shows also that the property of being shrinking is not closed under extensions.
\end{rem}

Before we prove a theorem about the relation between compactness and strong Ascoli property, we will need the following:
\begin{lem} Suppose that $X,Y$ are metric spaces, $X$ is compact and $Y$ is complete and let $K \subset C(X,Y)$ be an equicontinuous family. Then the set $$F:=\{x \in X: K(x)-\textup{relatively compact}\}$$ is closed where $K(x):=\{f(x): f \in K\}$.
\end{lem}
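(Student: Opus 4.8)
The plan is to verify that $F$ is closed by showing it contains the limits of its convergent sequences, which characterises closed sets in the metric space $X$. So I would fix a sequence $(x_n)_n \subset F$ with $x_n \to x$ and prove $x \in F$, that is, that $K(x)$ is relatively compact. Since $Y$ is complete, this is equivalent to $K(x)$ being totally bounded, so the whole argument reduces to producing, for an arbitrary $\varepsilon > 0$, a finite $\varepsilon$-net for $K(x)$.

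The first step uses equicontinuity of $K$: choose $\delta > 0$ such that $d_X(x',x'') < \delta$ implies $d_Y\big(f(x'),f(x'')\big) < \varepsilon/3$ for every $f \in K$, and then fix an index $n$ with $d_X(x_n,x) < \delta$, so that $d_Y\big(f(x_n),f(x)\big) \leq \varepsilon/3$ uniformly in $f \in K$. The second step uses the hypothesis $x_n \in F$: the set $K(x_n)$ is relatively compact, hence totally bounded, so there are $f_1,\dots,f_m \in K$ with $K(x_n) \subset \bigcup_{i=1}^m B\big(f_i(x_n),\varepsilon/3\big)$.

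To finish, take any $f \in K$ and pick $i$ with $d_Y\big(f(x_n),f_i(x_n)\big) < \varepsilon/3$; the triangle inequality along the chain $f(x), f(x_n), f_i(x_n), f_i(x)$ then gives $d_Y\big(f(x),f_i(x)\big) < \varepsilon$. Hence $\{f_1(x),\dots,f_m(x)\}$ is an $\varepsilon$-net for $K(x)$; as $\varepsilon$ was arbitrary, $K(x)$ is totally bounded, and completeness of $Y$ makes $\cl{K(x)}$ compact, i.e. $x \in F$.

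There is no real obstacle here; the only points requiring a little care are keeping the three error terms summing to $\varepsilon$ (hence the $\varepsilon/3$ splitting) and selecting the correct middle points for the triangle inequality. It is worth noting that compactness of $X$ is not actually used for this lemma --- it is present only because the lemma will be applied in an Ascoli-type setting --- since in a metric space closedness is already captured by stability under sequential limits.
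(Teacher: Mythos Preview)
Your proof is correct and follows essentially the same approach as the paper: use equicontinuity to transfer total boundedness from a nearby point of $F$ to $x$, invoking completeness of $Y$ to equate relative compactness with total boundedness. Your choice of net centers of the form $f_i(x_n)$ with $f_i \in K$ is a small streamlining over the paper, which first builds a net with centers in $Y$ and then has to relocate them into $K(x)$; and you are right that compactness of $X$ is not used in the argument.
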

\begin{proof} Since $Y$ is complete hence relative compactness is equivalent to complete boundedness so $F=\{x \in X:K(x)-\textup{complete bounded}\}$. Fix $x \in \cl{F}$ and $\eps>0$. From equicontinuity there exists $\delta>0$ such that for any function $f \in K$ and every $a,b \in X$ such that $d_X(a,b)<\delta$ we have $d_Y(f(a),f(b))<\frac{\eps}{4}$. As $x \in \cl{F}$ then there is $a \in F$ with $d_X(x,a)<\delta$. Since $a \in F$ then $K(a)$ is completely bounded thus there exist $z_1,...,z_N \in Y$ with the property:
$$K(a) \subset \bigcup_{j=1}^N B(z_j,\frac{\eps}{4}).$$
We claim that
\begin{equation}\label{kule}
K(x) \subset \bigcup_{j=1}^N B(z_j,\frac{\eps}{2}).
\end{equation}
Let $f \in K$: as $d_X(x,a)<\delta$ then $d_Y(f(x),f(a))<\frac{\eps}{4}$; moreover $f(a) \in K(a)$ so there is $j \in \{1,...,N\}$ such that $f(a) \in B(z_j,\frac{\eps}{4})$. From this we conclude:
$$d_Y(f(x),z_j) \leq d_Y(f(x),f(a))+d_Y(f(a),z_j) <\frac{\eps}{4}+\frac{\eps}{4}=\frac{\eps}{2}$$
which shows ~\eqref{kule}. In this way we obtain an $\frac{\eps}{2}$-net in $Y$ which can be modified to an $\eps$-net in $K(x)$ by defining $y_j$ as any  point from the set $K(x) \cap B(z_j,\frac{\eps}{2})$ (if $K(x) \cap B(z_j,\frac{\eps}{2})=\emptyset$ then we can skip the ball $B(z_j,\frac{\eps}{2})$ and still $K(x) \subset \bigcup_{i \neq j}B(z_i,\frac{\eps}{2})$. Thus without a loss of generality we can assume that for any $j=1,...,N$ we have $K(x) \cap B(z_j,\frac{\eps}{2}) \neq \emptyset$). Then for any element $f(x) \in K(x)$ where $f \in K$ there is $z_j \in Y$ such that $d_Y(f(x),z_j)<\frac{\eps}{2}$ and thus:
$$d_Y(f(x),y_j) \leq d_Y(f(x),z_j)+d_Y(z_j,y_j)<\frac{\eps}{2}+\frac{\eps}{2}=\eps$$
which shows that $K(x) \subset \bigcup_{j=1}^N B(y_j,\eps)$ hence $\{y_1,...,y_N\}$ is an $\eps$-net.
\end{proof}

\begin{cor} \label{wniosek1} If (using the notation and assumptions as above) the set $K(x)$ is relatively compact for every $x$ from some dense set $D \subset X$ then the same is true for every $x \in X$.
\end{cor}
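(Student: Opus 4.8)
The plan is to deduce the statement immediately from the preceding Lemma. That Lemma asserts that, under exactly the hypotheses in force here ($X$ compact, $Y$ complete, $K \subset C(X,Y)$ equicontinuous), the set
\[
F := \{x \in X : K(x) \textup{ is relatively compact}\}
\]
is a \emph{closed} subset of $X$. So the only work left is an elementary topological step.

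First I would observe that the hypothesis of the Corollary says precisely that $D \subseteq F$. Since $D$ is dense in $X$ we have $\overline{D} = X$, and since $F$ is closed we get $F = \overline{F} \supseteq \overline{D} = X$. Hence $F = X$, that is, $K(x)$ is relatively compact for every $x \in X$, which is the assertion. One could equivalently argue by sequences: given $x \in X$, pick $d_n \in D$ with $d_n \to x$; equicontinuity of $K$ transfers a finite $\eps$-net for $K(d_n)$ (with $d_n$ close enough to $x$) into a finite $\eps$-net for $K(x)$, exactly as in the proof of the Lemma.

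There is essentially no obstacle here: the entire content is packaged in the Lemma, and what remains is just the fact that a closed set containing a dense subset must be the whole space. The only point worth stressing is that the equicontinuity of $K$ and the completeness of $Y$ enter solely through the Lemma — they are what make $F$ closed — so neither hypothesis can be dropped (a sequence of bump functions escaping to infinity shows $F$ need not be closed without equicontinuity).
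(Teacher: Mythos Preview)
Your proof is correct and is exactly the intended argument: the paper states the Corollary without proof precisely because it follows immediately from the Lemma via the observation that a closed set containing a dense subset is the whole space.
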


\begin{thm} If $\Aa$ is a compact $C^*$-algebra then $\Aa$ has strong Ascoli property.
\end{thm}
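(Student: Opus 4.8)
The plan is to transfer the problem to the compact space $\Sigma(\Aa)$ and then invoke the classical Ascoli--Arzela theorem. One half of the asserted equivalence is already recorded in the remarks preceding the definition of the (strong) Ascoli property: a relatively compact $L\subset\Aa$ is automatically bounded and equicontinuous. So the whole content is the converse --- a bounded, equicontinuous $L\subset\Aa$ is relatively compact --- and this is where compactness of $\Aa$ must be used.

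First I would fix a compact generating set $K$. Since $\Aa$ is compact, $(\Sigma(\Aa),d_K)$ is a \emph{compact} metric space, and each $\hat a$ restricts to a uniformly continuous map $\Sigma(\Aa)\to\Bb(\ell^2)$, so $\Phi(a):=\hat a$ defines a map $\Phi:\Aa\to C\big(\Sigma(\Aa),\Bb(\ell^2)\big)$, the target being equipped with the uniform norm $\|\hat a\|_{\oo}=\sup_{\pi\in\Sigma(\Aa)}\|\pi(a)\|$. The crucial observation is that $\Phi$ is an \emph{isometry}: by density of $\Sigma^0(\Aa)$ in $\Sigma(\Aa)$ and continuity of $\hat a$ the supremum equals $\sup_{\pi\in\Sigma^0(\Aa)}\|\pi(a)\|$; by Theorem \ref{zwarta} a compact $C^*$-algebra has no infinite dimensional irreducible representation, so $\Sigma^0_{\oo}(\Aa)=\emptyset$ and $\Sigma^0(\Aa)=\Sigma^0_f(\Aa)=\{\rho^{\oo}:\rho\in\Irr(\Aa)\}$; finally $\|\rho^{\oo}(a)\|=\|\rho(a)\|$ and $\|a\|=\sup_{\rho\in\Irr(\Aa)}\|\rho(a)\|$, so $\|\Phi(a)\|_{\oo}=\|a\|$.

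Now take $L$ bounded and equicontinuous and put $\widehat{L}:=\{\hat a:a\in L\}\subset C(\Sigma(\Aa),\Bb(\ell^2))$; I would check the two hypotheses of the Ascoli--Arzela theorem (whose domain-compactness requirement is met because $\Sigma(\Aa)$ is compact). Equicontinuity of $\widehat{L}$ as a family of maps on $\Sigma(\Aa)$ is just the equicontinuity hypothesis on $L$ (which holds with respect to $\Rep(\Aa)$, hence a fortiori with respect to $\Sigma(\Aa)$), since $\|\hat x(\pi)-\hat x(\pi')\|=\|\pi(x)-\pi'(x)\|$. Pointwise relative compactness asks that $\widehat{L}(\pi)=\pi(L)$ be relatively compact in $\Bb(\ell^2)$ for each $\pi\in\Sigma(\Aa)$; I would first verify it on the dense set $\Sigma^0(\Aa)=\Sigma^0_f(\Aa)$, where $\pi=\rho^{\oo}$ with $\rho:\Aa\to M_n$, so $\pi(L)$ is the image of the bounded set $\rho(L)$ --- relatively compact because $M_n$ is finite dimensional --- under the isometric map $M_n\ni T\mapsto\aleph_0\odot T\in\Bb(\ell^2)$, and then extend it to all of $\Sigma(\Aa)$ by Corollary \ref{wniosek1} applied to the equicontinuous family $\widehat{L}$ with $X=\Sigma(\Aa)$ and $Y=\Bb(\ell^2)$. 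Ascoli--Arzela then gives that $\widehat{L}$ is relatively compact, in particular totally bounded, in $C(\Sigma(\Aa),\Bb(\ell^2))$; since $\Phi$ is isometric, $L$ is totally bounded in $\Aa$, hence relatively compact by completeness of $\Aa$.

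The step I expect to be the main obstacle is establishing pointwise relative compactness for \emph{every} $\pi\in\Sigma(\Aa)$, not merely for the algebraically explicit representations in $\Sigma^0(\Aa)$; this is exactly what the preceding Lemma and Corollary \ref{wniosek1} are designed to supply. It is worth noting that compactness of $\Aa$ enters twice in an essential way: once through Theorem \ref{zwarta}, which forces all irreducible representations to be finite dimensional --- this is what makes $\Phi$ an isometry and makes each fibre set $\rho(L)$ relatively compact --- and once through compactness of $\Sigma(\Aa)$ itself, which furnishes the compact domain required for both the Ascoli--Arzela theorem and Corollary \ref{wniosek1}.
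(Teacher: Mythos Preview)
Your proof is correct and follows essentially the same route as the paper: transfer to $C(\Sigma(\Aa),\Bb(\ell^2))$ via the isometry $x\mapsto\hat x$, verify the Ascoli--Arzela hypotheses (equicontinuity from (sAsc2), pointwise relative compactness first on $\Sigma^0_f(\Aa)$ via finite-dimensionality and then on all of $\Sigma(\Aa)$ via Corollary~\ref{wniosek1}), and pull back. The only minor remark is that the isometry $\|\Phi(a)\|_\oo=\|a\|$ does not actually require Theorem~\ref{zwarta}: the formula $\|a\|=\sup_{\pi\in\Irr(\Aa)}\|\pi(a)\|$ together with $\Sigma^0(\Aa)\supset\{\rho^\oo:\rho\in\Irr(\Aa)\}$ already suffices for any $\Aa$ (cf.\ the proof of Theorem~\ref{uniform}); Theorem~\ref{zwarta} is genuinely needed only for the pointwise-relative-compactness step.
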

\begin{proof}
Let $L \subset \Aa$ be bounded and equicontinuous.
For every $x \in L$ consider $\hat{x}$ as a mapping $\hat{x}: \Sigma(\Aa) \to \Bb(\ell^2)$. Then it is a (uniformly) continuous mapping, defined on a \textit{compact} metric space (and with values in a metric space)---in such circumstances we can use (classical) Ascoli theorem. We will check that the assumptions of this theorem are fulfilled:
\begin{enumerate}
\item since $L$ is equicontinuous hence the family $\{\hat{x}: x \in L\}$ is equicontinuous,
\item let $\pi \in \Sigma(\Aa)$ be of the form $\aleph_0 \odot \pi'$ where $\pi'$ is irreducible. As $\Aa$ is a compact $C^*$-algebra, $\pi'$ is finite dimensional, say $\pi': \Aa \to M_n$. Thus
$$\{\hat{x}(\pi): x \in L\}=\{\pi(x): x \in L\}=\{\aleph_0 \odot \pi'(x): x \in L\}.$$
Clearly the mapping $M_n \ni A \mapsto \aleph_0 \odot A \in \Bb(\ell^2)$ is an isometry, thus it is enough to show that $\{\pi'(x): x \in L\}$ is relatively compact---but this is a bounded subset of $\CCC^n$ thus it is relatively compact.
\end{enumerate}
Since $\Sigma^0_f(\Aa)$ is dense in $\Sigma_f(\Aa)=\Sigma(\Aa)$ hence the set $\pi(L)$ is relatively compact for any $\pi \in \Sigma(\Aa)$ (Corollary ~\ref{wniosek1}).
Therefore from Ascoli theorem the set $\{\hat{x}: x \in L\}$ is relatively compact. As the mapping $x \mapsto \hat{x}$ is an isometry (it was checked for example in the proof of Theorem \ref{uniform}), $L$ also must be relatively compact.
\end{proof}

\begin{rem} Let us note that without knowing Theorem \ref{zwarta} (and Corollary ~\ref{wniosek1}) one can show quite similarly a weaker result that every compact $C^*$-algebra has Ascoli property.
\end{rem}

\subsection{Pointwise and uniform convergence}
\begin{df} We say that a net $(a_s)_s \subset \Aa$ tends to $a \in \Aa$ \textit{pointwise}, if for any representation $\pi \in \Sigma(\Aa)$ we have $\|\pi(a_s)-\pi(a)\| \to 0$.
\end{df}

\begin{rms} It is clear that if $\Aa=C(X)$ is a commutative $C^*$-algebra then every representation belonging to $\Sigma(\Aa)$ is automatically irreducible and is of the form $f \mapsto f(x)$ for some $x \in X$. Therefore the above defined convergence is just the ordinary pointwise convergence. On the other hand, if in the above definition we require convergence for all elements in $\Rep(\Aa)$ instead of $\Sigma(\Aa)$ then we would end up with the definition of the norm convergence: this is simply due to the fact, that we could consider a faithful representation.
\end{rms}

The above defined convergence comes from a locally convex topology, determined by the family of seminorms defined by the formula:
$a \mapsto \|\pi(a)\|, \ \pi \in \Sigma(\Aa)$.

All $C^*$-algebra operations (linear operations, multiplication, involution) are continuous with respect to this topology---for example, to check the continuity of multiplication note that if $a_s \to a,\ b_s \to b$ pointwise and $\pi \in \Sigma(\Aa)$, then we have:
\begin{gather*}
\|\pi(a_sb_s)-\pi(ab)\| \leq \|\pi(a_s)\pi(b_s)-\pi(a)\pi(b_s)\|+\|\pi(a)\pi(b_s)-\pi(a)\pi(b)\| \leq \\
\|\pi(a_s)-\pi(a)\| \cdot \|\pi(b_s)\|+\|\pi(a)\|\|\pi(b_s)-\pi(b)\| \to 0
\end{gather*}
as the net $\{\|\pi(b_s)\|\}_s$ is bounded for sufficiently large $s$. \\
In the context of (not necessary commutative) compact $C^*$-algebras we have a following version of the classical Dini's Theorem:
\begin{thm} Let $\Aa$ be a compact, unital $C^*$-algebra and $(a_n)_n$ be an increasing sequence converging pointwise to $a$. Then $a_n \to a$ in the norm.
\end{thm}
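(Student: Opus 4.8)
The plan is to transfer the problem to the compact metric space $\Sigma(\Aa)$ and invoke the classical Dini theorem there. The bridge is the isometry $x\mapsto\hat x$: for every $y\in\Aa$ one has $\|y\|=\sup_{\pi\in\Sigma(\Aa)}\|\pi(y)\|$. This is because $\|y\|=\sup\{\|\pi(y)\|:\pi\ \textup{irreducible}\}$ for the unital algebra $\Aa$, because this norm is unaffected by replacing an irreducible $\pi$ with $\pi^{\oo}=\aleph_0\odot\pi$, and because the continuity of $\hat y$ (already established) together with the density of $\Sigma^0(\Aa)$ in $\Sigma(\Aa)$ lets one pass from $\Sigma^0(\Aa)$ to its closure. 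Hence $\|a-a_n\|=\sup_{\pi\in\Sigma(\Aa)}\|\pi(a)-\pi(a_n)\|$, and the whole task is to show that this supremum tends to $0$.

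Next I would fix $\pi\in\Sigma(\Aa)$ and note two things. Since each $a_n$ is self-adjoint with $a_n\le a_{n+1}$, applying the $*$-homomorphism $\pi$ gives $\pi(a_n)\le\pi(a_m)$ whenever $n\le m$; letting $m\to\oo$ and using that $\|\pi(a_m)-\pi(a)\|\to0$ (so that the norm-closed cone of positive operators is respected in the limit) yields $\pi(a_n)\le\pi(a)$ for every $n$. Consequently $0\le\pi(a)-\pi(a_{n+1})\le\pi(a)-\pi(a_n)$, and since the operator norm is monotone on positive operators, $\|\pi(a)-\pi(a_{n+1})\|\le\|\pi(a)-\pi(a_n)\|$.

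Now set $g_n\colon\Sigma(\Aa)\to[0,\oo)$, $g_n(\pi):=\|\pi(a)-\pi(a_n)\|=\|\widehat{a-a_n}(\pi)\|$. Each $g_n$ is continuous, being the composition of the continuous map $\widehat{a-a_n}$ with the norm of $\Bb(\ell^2)$. By the previous paragraph $(g_n)_n$ is pointwise nonincreasing, every $g_n$ is nonnegative, and by the definition of pointwise convergence $a_n\to a$ we have $g_n(\pi)\to0$ for each $\pi$. Since $\Sigma(\Aa)$ is compact---this is exactly the hypothesis that $\Aa$ is a compact $C^*$-algebra---the classical Dini theorem applies and gives $\sup_{\pi\in\Sigma(\Aa)}g_n(\pi)\to0$, that is, $\|a-a_n\|\to0$.

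Apart from Dini's theorem and the compactness of $\Sigma(\Aa)$, the argument only uses standard $C^*$-facts: the realization of the norm as a supremum over irreducible representations, the monotonicity of the operator norm on positive elements, and the closedness of the positive cone under norm limits. The only step that deserves care is the first one---the identification $\|a-a_n\|=\sup_{\pi\in\Sigma(\Aa)}\|\pi(a)-\pi(a_n)\|$, i.e. that the representations collected in $\Sigma(\Aa)$ suffice to compute norms in $\Aa$; everything after that is a routine application of Dini. (One could also first invoke Theorem \ref{zwarta} to observe that all the $\pi$ in question are finite dimensional, but this is not needed.)
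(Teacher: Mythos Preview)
Your proof is correct and follows essentially the same approach as the paper: both reduce to showing that the continuous nonincreasing functions $g_n(\pi)=\|\pi(a-a_n)\|$ on the compact space $\Sigma(\Aa)$ converge uniformly to $0$, using the identity $\|x\|=\sup_{\pi\in\Sigma(\Aa)}\|\pi(x)\|$. The only difference is cosmetic---you invoke the classical Dini theorem as a black box, whereas the paper reproduces its standard open-cover proof inline (taking $K_n=\{\pi:\|\pi(a-a_n)\|<\eps\}$ and extracting a finite subcover).
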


\begin{proof} Denote for convenience $b_n:=a-a_n$---then $b_n \geq 0$ and $b_n \to 0$ pointwise. Fix $\eps>0$ and let $K_n=\{\pi \in \Sigma(\Aa): \|\pi(b_n)\| < \eps \}$.
As $\big(\pi(b_n)\big)_n$ is a decreasing sequence of nonnegative elements, $\big(\|\pi(b_n)\|\big)_n$ is decreasing, thus $\{K_n\}_n$ is an increasing sequence of open sets. From our assumption we have $\Sigma(\Aa)=\bigcup_{n=1}^{\oo}K_n$. Since $\Sigma(\Aa)$ is compact, there exists $N \in \NNN$ such that $K_n=\Sigma(\Aa)$ for every $n>N$. Thus if $n>N$, then for any $\pi \in \Sigma(\Aa)$ we have $\|\pi(b_n)\| <\eps$ so $\sup_{\pi \in \Sigma(\Aa)}\|\pi(b_n)\|=\|b_n\|<\eps$, which shows the convergence in the norm.
\end{proof}

Next we would like to describe the relation between the above convergence and weak convergence (in the sense of Banach space theory).
We will need the following result (see for example \cite{Phe}):
\begin{thm}[Choquet] \label{Choquet} Let $E$ be a Banach space and $K \subset E^*$ be a compact, convex and metrisable subset of the dual space. Then any $\psi \in K$ is of the form
$$\psi(x)=\int_{\textup{Ex}(K)}\varphi(x)d\mu(\varphi)$$
for some probabilistic measure $\mu$ on the set  $\textup{Ex}(K)$ of extreme points of $K$.
\end{thm}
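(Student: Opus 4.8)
The plan is to run the standard ``maximal measure'' proof of Choquet's theorem, in the metrisable case. First I would record two reductions. Since $K$ is compact and metrisable, say with a compatible metric $d$, the set $K \setminus \textup{Ex}(K) = \bigcup_{n=1}^{\oo} \{ \tfrac12(y+z) : y,z \in K,\ d(y,z) \ge \tfrac1n \}$ is a countable union of compact sets, so $\textup{Ex}(K)$ is a $G_\delta$, in particular Borel, and thus carries Borel probability measures. Secondly, call a Radon probability measure $\mu$ on $K$ a \emph{representing measure for} $\psi$ if $\psi(x) = \int_K \varphi(x)\,d\mu(\varphi)$ for every $x \in E$ (that is, $\psi$ is the barycentre of $\mu$); the Dirac mass $\delta_\psi$ is such a measure, so the set $M_\psi$ of them is nonempty, convex, and weak-$*$ compact in $C(K)^*$. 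It therefore suffices to produce some $\mu \in M_\psi$ with $\mu\big(\textup{Ex}(K)\big) = 1$, since then the restriction of $\mu$ to $\textup{Ex}(K)$ does the job.

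Next I would build a strictly convex continuous function on $K$. As $K$ is compact metrisable, $C(K)$ is separable, hence so is the closed subspace $A$ of real continuous affine functions on $K$; I would pick a sequence $(h_n)$ dense in the unit ball of $A$, which then separates the points of $K$ (because $A$ itself does, by Hahn--Banach). The function $f := \sum_{n \ge 1} 2^{-n} h_n^2$ is then continuous, bounded and convex, and in fact \emph{strictly} convex: on any nondegenerate segment in $K$ some $h_n$ is nonconstant, so the corresponding $h_n^2$ is strictly convex along it. Now choose $\mu \in M_\psi$ maximising the weak-$*$ continuous functional $\nu \mapsto \int_K f\,d\nu$ over the compact set $M_\psi$; the maximum is attained.

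The core of the proof is to show this maximiser is concentrated on $\textup{Ex}(K)$. Consider the concave upper envelope $\hat f(\varphi) := \inf\{ g(\varphi) : g \in A,\ g \ge f \}$, which equals $\sup\{ \int_K f\,d\lambda : \lambda \in M_\varphi \}$ and is concave, bounded, upper semicontinuous (hence Borel), with $\hat f \ge f$. One checks the exact identification $\{\hat f = f\} = \textup{Ex}(K)$: if $\varphi \in \textup{Ex}(K)$ then $M_\varphi = \{\delta_\varphi\}$, so $\hat f(\varphi) = f(\varphi)$; if $\varphi = ty + (1-t)z$ with $y \ne z$ and $0 < t < 1$, then $\hat f(\varphi) \ge t f(y) + (1-t) f(z) > f(\varphi)$ by strict convexity. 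So it remains to prove $\int_K \hat f\,d\mu = \int_K f\,d\mu$, which gives $\mu(\{\hat f > f\}) = 0$, i.e.\ $\mu(\textup{Ex}(K)) = 1$. This is where maximality is used: if $\int_K(\hat f - f)\,d\mu > 0$, then choosing, for each $\varphi$, a measure $\lambda_\varphi \in M_\varphi$ with $\int_K f\,d\lambda_\varphi$ close to $\hat f(\varphi)$ in a suitably measurable way, the averaged measure $\nu := \int_K \lambda_\varphi\,d\mu(\varphi)$ again has barycentre $\psi$ (averaging preserves barycentres) but satisfies $\int_K f\,d\nu > \int_K f\,d\mu$, contradicting the choice of $\mu$. (Equivalently, one invokes Zorn's lemma to get a measure above $\delta_\psi$ that is maximal for the Choquet ordering $\mu \wk \nu \iff \int_K g\,d\mu \le \int_K g\,d\nu$ for all continuous convex $g$, and shows such maximal measures are precisely those supported on $\textup{Ex}(K)$.)

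Putting this together, $\mu$ is a representing measure for $\psi$ with $\mu(\textup{Ex}(K)) = 1$, so for every $x \in E$ we get $\psi(x) = \int_K \varphi(x)\,d\mu(\varphi) = \int_{\textup{Ex}(K)} \varphi(x)\,d\mu(\varphi)$, which is exactly the claim. I expect the main obstacle to be the measurable-selection step $\varphi \mapsto \lambda_\varphi$ used to construct the averaged measure $\nu$ (equivalently, the clean lemma that maximal measures live on the extreme points): justifying the ``disintegration/averaging'' rigorously needs either a Borel selection or the apparatus of the Choquet ordering and its maximal elements. The rest --- compactness of $M_\psi$, strict convexity of $f$, and the soft identification $\{\hat f = f\} = \textup{Ex}(K)$ --- is routine.
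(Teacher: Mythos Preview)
The paper does not prove this theorem; it is stated as a known result with a reference to Phelps's lecture notes \cite{Phe}, and is then used as a black box in the subsequent lemma. So there is no paper proof to compare against.

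Your sketch is essentially the standard metrisable Choquet argument (which is exactly what one finds in the cited reference): build a strictly convex continuous function from a countable separating family of affine functions, pick a representing measure maximising its integral, and identify the maximiser as supported on the extreme points via the upper envelope. The only soft spot you flag yourself --- the measurable selection $\varphi \mapsto \lambda_\varphi$ --- is indeed the place where the naive argument is not rigorous, but the parenthetical alternative you give (pass to a measure maximal in the Choquet order and show such measures are supported on $\textup{Ex}(K)$) is precisely how the standard treatments, including Phelps, circumvent it. So the proposal is correct and matches the reference the paper defers to.
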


\begin{lem} If $(x_n)_n \subset \Aa$ is a bounded sequence, then $x_n \to 0$ weakly if and only if for every pure state $\varphi$ we have $\varphi(x_n) \to 0$.
\end{lem}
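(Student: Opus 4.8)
The plan is to deduce weak convergence from convergence against pure states by means of the Choquet representation (Theorem~\ref{Choquet}), using boundedness of $(x_n)_n$ to furnish the dominating function needed to pass to the limit under the integral sign.

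The forward implication needs no argument: a pure state is in particular an element of $\Aa^*$, so if $x_n \to 0$ weakly then $\varphi(x_n) \to 0$ for every $\varphi \in \PPp(\Aa)$.

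For the converse I would proceed as follows. Assume $\varphi(x_n) \to 0$ for all $\varphi \in \PPp(\Aa)$ and put $M := \sup_n \|x_n\| < \oo$. First I would reduce to states: every functional in $\Aa^*$ is a linear combination of at most four states --- decompose it into its Hermitian parts and apply the Jordan decomposition of a Hermitian functional as a difference of positive functionals, each of which is a nonnegative multiple of a state --- so it suffices to prove $\varphi(x_n) \to 0$ for an arbitrary state $\varphi$. Let $K$ be the quasi-state space of $\Aa$, i.e. the set of positive functionals of norm at most $1$; it is convex and weak-$*$ compact, and since $\Aa$ is separable it is metrisable, so Theorem~\ref{Choquet} is available. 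The extreme points of $K$ are exactly the pure states together with the zero functional, so for the fixed state $\varphi$ there is a probability measure $\mu$ on $\textup{Ex}(K)$ with
$$\varphi(x)=\int_{\textup{Ex}(K)}\psi(x)\,d\mu(\psi),\qquad x\in\Aa.$$
It then remains to interchange the limit and the integral. For each $n$ the map $\psi\mapsto\psi(x_n)$ is weak-$*$ continuous, hence Borel on $\textup{Ex}(K)$, and $|\psi(x_n)|\le\|x_n\|\le M$ for every $\psi\in\textup{Ex}(K)$; since $\mu$ is a probability measure, the constant $M$ serves as a $\mu$-integrable majorant. By hypothesis $\psi(x_n)\to 0$ for every pure state $\psi$, and trivially for $\psi=0$, so the Dominated Convergence Theorem gives $\varphi(x_n)=\int_{\textup{Ex}(K)}\psi(x_n)\,d\mu(\psi)\to 0$; the reduction to states then completes the argument.

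The hard part is really just a matter of having the right tool in the right shape: one has to verify that Choquet's theorem applies, which is exactly where separability of $\Aa$ enters (it makes $K$ metrisable), and one must recall that $\textup{Ex}(K)$ is the set of pure states (plus $0$). Once that is in place the dominated-convergence step is the crux, and it is here --- and only here --- that boundedness of $(x_n)_n$ is used in an essential way; without it the implication is false.
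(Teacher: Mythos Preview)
Your proof is correct and follows essentially the same route as the paper: Choquet representation of states as barycenters of measures on the pure states, dominated convergence (using boundedness of $(x_n)_n$), and the decomposition of an arbitrary functional into four states. The only cosmetic difference is that you work with the quasi-state space (extreme points $=\PPp(\Aa)\cup\{0\}$), whereas the paper applies Choquet directly to the state space $\SSs(\Aa)$, which is already weak-$*$ compact since $\Aa$ is unital; your detour is harmless but unnecessary in this setting.
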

\begin{proof}
Suppose that for every pure state $\varphi \in \PPp(\Aa)$, $\varphi(x_n) \to 0$.
If we define $f_n: \PPp(\Aa) \to \CCC$ by the formula $f_n(\varphi)=\varphi(x_n)$ then all $f_n$'s are *-weakly continuous and $\|f_n\|=\|x_n\|$ thus $(f_n)_n$ is uniformly bounded. Our assumption says that $f_n \to 0$ pointwise, thus from Lebesgue Theorem, for every probabilistic measure $\mu$ on $\PPp(\Aa)$ we have $\int_{\PPp(\Aa)}f_n(\varphi)d\mu(\varphi)=\int_{\PPp(\Aa)}\varphi(x_n)d\mu(\varphi) \to 0$. Since the set of all states $\SSs(\Aa)$ is *-weakly compact, convex and metrisable (if $\Aa$ is separable), then from the Choquet theorem every state $\psi \in \SSs(\Aa)$ is of the form
$$\psi(x)=\int_{\PPp(\Aa)}\varphi(x)d\mu(\varphi)$$
for some probabilistic measure $\mu$ on $\PPp(\Aa)$. Thus we get $\psi(x_n) \to 0$ for any state $\psi$. Since every linear, continuous functional on $\Aa$ is a linear combination of (at most four) states hence $x_n \to 0$ weakly.
\end{proof}
\begin{thm} Let $\Aa$ be a separable, unital $C^*$-algebra and $(x_n)_n \subset \Aa$ be a bounded sequence. The following conditions are equivalent:
\begin{enumerate}
\item $x_n \to 0$ weakly;
\item for any irreducible representation $\pi:\Aa \to \Bb(\HHh)$ we have $\pi(x_n) \to 0$ in the WOT topology;
\item for any representation $\pi:\Aa \to \Bb(\HHh)$ we have $\pi(x_n) \to 0$ in the WOT topology.
\end{enumerate}
\end{thm}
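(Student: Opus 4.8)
The plan is to prove the cycle of implications $(1) \Rightarrow (3) \Rightarrow (2) \Rightarrow (1)$, the first two being essentially formal and the last one resting on the Lemma proved just above together with the GNS construction. (Note one cannot shortcut $(3)\Rightarrow(1)$ by passing to a faithful representation: weak convergence in $\Aa$ is tested against all of $\Aa^*$, and not every functional on $\Aa$ is a WOT-limit of vector functionals of a single fixed representation.)

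First I would establish $(1) \Rightarrow (3)$. Suppose $x_n \to 0$ weakly and let $\pi : \Aa \to \Bb(\HHh)$ be any representation. For fixed $\xi, \eta \in \HHh$ consider the functional $a \mapsto \langle \pi(a)\xi, \eta \rangle$ on $\Aa$; it is linear and bounded, since $|\langle \pi(a)\xi, \eta\rangle| \leq \|\pi(a)\|\,\|\xi\|\,\|\eta\| \leq \|a\|\,\|\xi\|\,\|\eta\|$. Hence, by the definition of weak convergence, $\langle \pi(x_n)\xi, \eta\rangle \to 0$ for all $\xi, \eta \in \HHh$, which is precisely WOT-convergence $\pi(x_n) \to 0$. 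The implication $(3) \Rightarrow (2)$ is immediate, as irreducible representations are in particular representations.

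The substance is in $(2) \Rightarrow (1)$. By the Lemma above, since $(x_n)_n$ is bounded, it suffices to show that $\varphi(x_n) \to 0$ for every pure state $\varphi \in \PPp(\Aa)$. I would then invoke the GNS construction: a state $\varphi$ is pure precisely when its GNS representation $\pi_\varphi : \Aa \to \Bb(\HHh_\varphi)$ is irreducible, and with the canonical cyclic unit vector $\xi_\varphi \in \HHh_\varphi$ one has $\varphi(a) = \langle \pi_\varphi(a)\xi_\varphi, \xi_\varphi\rangle$ for all $a \in \Aa$. Applying hypothesis $(2)$ to the irreducible representation $\pi_\varphi$ gives $\pi_\varphi(x_n) \to 0$ in WOT, and evaluating at $\xi_\varphi$ yields $\varphi(x_n) = \langle \pi_\varphi(x_n)\xi_\varphi, \xi_\varphi\rangle \to 0$, as required; the Lemma then gives $x_n \to 0$ weakly.

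The only genuine obstacle is the reduction encapsulated in the Lemma, namely that weak convergence of a bounded sequence may be tested on pure states alone, which is where the Choquet integral representation of states and dominated convergence enter; once that is in hand, the remainder is a direct application of GNS and boundedness of the vector functionals. One should also record that separability of $\Aa$ is used only through the Lemma (to guarantee that $\SSs(\Aa)$ is metrisable, so Choquet applies), and that boundedness of $(x_n)_n$ is likewise essential, entering via the uniform boundedness of the functionals $f_n(\varphi) = \varphi(x_n)$.
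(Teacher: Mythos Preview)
Your proof is correct and follows essentially the same approach as the paper: the paper also proves $(1)\Rightarrow(3)$ via the bounded linear functional $a\mapsto\langle\pi(a)\xi,\eta\rangle$, and $(2)\Rightarrow(1)$ via the GNS representation of a pure state together with the preceding Lemma. Your additional remarks on where separability and boundedness enter are accurate and go slightly beyond what the paper spells out.
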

\begin{proof}
Assume that $x_n \to 0$ weakly, fix an arbitrary representation $\pi$ and arbitrary vectors $\xi,\eta \in \HHh_{\pi}$. Define $\varphi:\Aa \to \CCC$ by the formula
$$\varphi(x):=\langle \pi(x)\xi,\eta \rangle.$$
Then $\varphi$ is a linear and continuous functional thus $\varphi(x_n) \to 0$ which shows that $\pi(x_n) \to 0$ in the WOT topology. \par
Now assume that for any irreducible representation $\pi$, $\pi(x_n) \to 0$ in the WOT topology and fix $\varphi \in \PPp(\Aa)$. Then $\varphi$ gives rise to an irreducible representation $\pi_{\varphi}:\Aa \to \Bb(\HHh_{\varphi})$ such that
$$\varphi(x)=\langle \pi_{\varphi}(x)\xi_{\varphi},\xi_{\varphi} \rangle$$
where $\xi_{\varphi} \in \HHh_{\varphi}$ is a (unit) cyclic vector.
From our assumption $\langle \pi(x_n)\xi_{\varphi},\xi_{\varphi} \rangle \to 0$ so $\varphi(x_n)\to 0$ hence from the lemma above, $x_n \to 0$ weakly.
\end{proof}
Clearly if every irreducible representation of $\Aa$ is finite dimensional, then for such a representation $\pi$ the convergence $\pi(x_n) \to 0$ in the WOT topology is equivalent to the convergence in the norm and we get the following:
\begin{cor}
If $\Aa$ is a unital CCR-algebra then for a bounded sequence $(x_n)_n \subset \Aa$ the following conditions are equivalent:
\begin{itemize}
\item $x_n \to 0$ pointwise,
\item $x_n \to 0$ weakly.
\end{itemize}
In particular, the above equivalence takes place when $\Aa$ is a compact $C^*$-algebra. \qed
\end{cor}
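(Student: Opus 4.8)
The plan is to obtain the statement directly from the Theorem proved just above, together with a single structural observation. The operative hypothesis (which holds for a CCR $C^*$-algebra and, by Theorem~\ref{zwarta}, for a compact one) is that every irreducible representation $\pi\colon\Aa\to\Bb(\HHh_\pi)$ is finite dimensional. Hence on each $\Bb(\HHh_\pi)$ the weak operator topology and the norm topology coincide, so for such a $\pi$ and a bounded sequence $(x_n)_n$ the conditions ``$\pi(x_n)\to 0$ in WOT'' and ``$\|\pi(x_n)\|\to 0$'' are literally the same.

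First I would invoke the Theorem: for bounded $(x_n)_n$ one has $x_n\to 0$ weakly if and only if $\pi(x_n)\to 0$ in WOT for every irreducible $\pi$, which by the observation above is the same as $\|\pi(x_n)\|\to 0$ for every irreducible $\pi$. Next I would rephrase this in the language of $\Sigma(\Aa)$: since the map $M_n\ni A\mapsto\aleph_0\odot A\in\Bb(\ell^2)$ is isometric (as noted in the proof of the strong Ascoli property) and $\pi^{\oo}\in\Sigma^0(\Aa)$ for each finite dimensional irreducible $\pi$, the condition ``$\|\pi(x_n)\|\to 0$ for all irreducible $\pi$'' is exactly ``$\|\rho(x_n)\|\to 0$ for all $\rho\in\Sigma^0(\Aa)$''. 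The final step would be to upgrade this from the dense set $\Sigma^0(\Aa)$ to all of $\Sigma(\Aa)$, i.e.\ to the pointwise convergence in the sense of the definition preceding the statement; here I would use that every $\rho\in\Sigma(\Aa)$ is a $d_K$-limit of a sequence $\pi_k\in\Sigma^0(\Aa)$ and that each $\widehat{x_n}$ is (uniformly) continuous on $\Sigma(\Aa)$, so that $\|\rho(x_n)\|=\lim_k\|\pi_k(x_n)\|$, combined with the earlier remark that the whole uniform structure --- hence the attached notion of convergence --- is insensitive to whether it is built from $\Irr(\Aa)$ or from its closure $\Sigma(\Aa)$.

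The main obstacle --- indeed essentially the only point that is not pure bookkeeping --- is this last passage to the closure: ``pointwise convergence'' is tested against \emph{all} $\rho\in\Sigma(\Aa)$, and such a $\rho$ may be reducible and infinite dimensional (a limit of finite dimensional irreducibles of growing dimension), so that on $\Bb(\HHh_\rho)$ the WOT and the norm topology differ and the third condition of the Theorem cannot be quoted verbatim for $\rho$; the remedy is exactly the continuity of $\widehat{x_n}$ and the density of $\Sigma^0(\Aa)$ used above. Once that is in place the equivalence ``weak $\Leftrightarrow$ pointwise'' is forced, and the ``in particular'' clause is immediate since compact $C^*$-algebras satisfy the hypothesis by Theorem~\ref{zwarta}. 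All the genuine analytic content has already been spent --- in the Theorem (through the Choquet theorem, Theorem~\ref{Choquet}, and the preceding lemma on pure states) and in Theorem~\ref{zwarta}.
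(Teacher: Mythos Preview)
Your core approach is exactly the paper's: invoke the preceding theorem (weak $\Leftrightarrow$ WOT on irreducibles), then use that in a unital CCR algebra every irreducible representation is finite dimensional so WOT and norm coincide there, and read off the equivalence. The paper literally records only that one sentence and a \qed; it does not address the passage from $\Sigma^0(\Aa)$ to $\Sigma(\Aa)$ at all.

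You deserve credit for noticing the closure issue, but your proposed remedy does not close it. From $\|\rho(x_n)\|=\lim_k\|\pi_k(x_n)\|$ (continuity of each $\widehat{x_n}$) together with $\lim_n\|\pi_k(x_n)\|=0$ for every fixed $k$, you cannot conclude $\lim_n\|\rho(x_n)\|=0$: that is an interchange of limits which would require equicontinuity of the family $\{\widehat{x_n}:n\in\NNN\}$ at $\rho$, and a merely bounded sequence need not be equicontinuous. The remark you invoke (that the uniform structure built from $\Irr(\Aa)$ agrees with the one built from $\Sigma(\Aa)$) concerns moduli of continuity and equicontinuity, not the locally convex topology of the seminorms $a\mapsto\|\pi(a)\|$; it does not supply the missing uniformity. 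In fact the paper itself flags precisely this question as open at the end (whether $\pi(a_s)\to 0$ for all irreducible $\pi$ forces pointwise convergence in the $\Sigma(\Aa)$ sense, and whether $\Sigma(\Aa)$ for a unital CCR algebra can contain representations not of the form $\aleph_0\odot\pi$ with $\pi$ finite dimensional). So on this point you are not worse off than the paper --- you simply should not claim that density plus continuity of each $\widehat{x_n}$ resolves it.
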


We can also consider the following convergence:
\begin{df} We say that a net $(a_s)_s$ converges to $a$ \textit{uniformly} if $\|\pi(a_s)-\pi(a)\| \to 0$ uniformly with respect to $\pi \in \Sigma(\Aa)$.
\end{df}
Directly from the definition we see that the uniform convergence $a_s \to a$ is just the uniform convergence of $\widehat{a_s}$ to $\hat{a}$ as continuous functions $\Sigma(\Aa) \to \Bb(\ell^2)$. In particular we see that this convergence comes from some metric---thus we can restrict our attention to ordinary sequences instead of nets.
It turns out that such defined convergence coincides with the norm convergence:
\begin{thm} \label{uniform} A sequence $(a_n)_n \subset \Aa$ converges to $a \in \Aa$ uniformly iff $\|a_n-a\| \to 0$.
\end{thm}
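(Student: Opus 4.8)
\emph{Proof proposal.} The plan is to show that the two convergences in the statement are not merely equivalent but coincide term by term, by establishing the norm formula
$$\sup_{\pi \in \Sigma(\Aa)} \|\pi(b)\| = \|b\| \qquad (b \in \Aa);$$
applying this to $b = a_n - a$ gives $\sup_{\pi \in \Sigma(\Aa)}\|\pi(a_n)-\pi(a)\| = \|a_n-a\|$ for every $n$, from which the theorem is immediate. (As a byproduct this records that $x \mapsto \hat{x}$ is an isometry of $\Aa$ onto its image in the bounded continuous maps $\Sigma(\Aa) \to \Bb(\ell^2)$, the fact quoted in the proof of an earlier theorem.)

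One inequality is trivial and yields the easy direction: every $\pi \in \Sigma(\Aa)$ is a $*$-homomorphism of $C^*$-algebras, hence contractive, so $\|\pi(b)\| \leq \|b\|$, giving $\sup_{\pi \in \Sigma(\Aa)}\|\pi(b)\| \leq \|b\|$; in particular $\|a_n-a\| \to 0$ forces $\sup_{\pi}\|\pi(a_n)-\pi(a)\| \to 0$. For the reverse inequality I would invoke the standard fact that the norm of a $C^*$-algebra element is recovered, and in fact attained, on irreducible representations. Concretely: $b^*b$ is a positive (self-adjoint) element, so $\|b^*b\| = \sup\{\varphi(b^*b): \varphi \in \SSs(\Aa)\}$; since $\SSs(\Aa)$ is weak-$*$ compact and convex and $\varphi \mapsto \varphi(b^*b)$ is affine and continuous, the supremum is attained at an extreme point, i.e. at a pure state $\varphi_0 \in \PPp(\Aa)$ with $\varphi_0(b^*b) = \|b\|^2$. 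Letting $\pi_0$ be the GNS representation of $\varphi_0$ with cyclic unit vector $\xi_0$, one has that $\pi_0$ is irreducible and $\|\pi_0(b)\|^2 = \|\pi_0(b^*b)\| \geq \langle \pi_0(b^*b)\xi_0,\xi_0\rangle = \varphi_0(b^*b) = \|b\|^2$, so $\|\pi_0(b)\| = \|b\|$.

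It remains to place $\pi_0$ inside $\Sigma(\Aa)$. Since $\Aa$ is separable, $\HHh_{\pi_0}$ is separable: if it is finite dimensional then $\pi_0^{\oo} = \aleph_0 \odot \pi_0 \in \Sigma^0_f(\Aa) \subseteq \Sigma(\Aa)$ and $\|\pi_0^{\oo}(b)\| = \|\pi_0(b)\| = \|b\|$; if it is infinite dimensional we identify $\HHh_{\pi_0}$ with $\ell^2$ and then $\pi_0 \in \Sigma^0_{\oo}(\Aa) \subseteq \Sigma(\Aa)$. Either way the norm of $b$ is attained on $\Sigma(\Aa)$, so $\sup_{\pi \in \Sigma(\Aa)}\|\pi(b)\| = \|b\|$, completing the argument. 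The only non-formal ingredient is this attainment statement, which rests on the GNS construction together with the existence of pure states and the Krein--Milman-type reduction above; these are entirely standard, so I do not anticipate a genuine obstacle. The real content is simply the observation that, in view of the norm formula, the "uniform convergence'' of the definition is literally norm convergence in disguise.
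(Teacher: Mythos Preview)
Your proposal is correct and follows exactly the paper's route: both arguments reduce the theorem to the identity $\|b\|=\sup_{\pi\in\Sigma(\Aa)}\|\pi(b)\|$ and conclude immediately. The only difference is that the paper simply cites the formula $\|x\|=\sup\{\|\pi(x)\|:\pi\in\Irr(\Aa)\}$ from Dixmier (2.7.1, 2.7.3) and observes that $\Irr(\Aa)$ sits inside $\Sigma(\Aa)$, whereas you supply a self-contained proof of this via pure states, Krein--Milman, and GNS---so your version is more detailed but not genuinely different.
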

\begin{proof}
From the formula $\|x\|=\sup\{\|\pi(x)\|: \pi \in \Rep(\Aa)\}=\sup\{\|\pi(x)\|: \pi \in \Irr(\Aa)\}$ (see 2.7.1 and 2.7.3 in \cite{Dix}) it follows
$$\|x\|=\sup\{\|\pi(x)\|: \pi \in \Sigma(\Aa)\},$$
hence the theorem follows.
\end{proof}

\section{Uniform continuity of morphisms}
\subsection{Two equivalent approaches}
Let $X,Y$ be two compact metric spaces and $\alpha:C(X) \to C(Y)$ be a *-homomorphism (preserving the units). Then $\alpha$ induces a continuous mapping $u:Y \to X$ such that $\alpha(f)=f \circ u$. Then for every $y_1,y_2 \in Y$ it follows:
\begin{gather*} |\alpha(f)(y_1)-\alpha(f)(y_2)|=|f(u(y_1))-f(u(y_2))| \leq \\
\w_f \big(d_X\big(u(y_1),u(y_2)\big)\big) \leq \w_f \big(w_u\big(d_Y(y_1,y_2)\big)\big)\end{gather*}
where $\w_f,\w_u$ are (minimal) moduli of continuity for $f$ and $u$ (resp.). Hence when $\w_{\alpha(f)}$ denotes the minimal modulus of continuity for $\alpha(f) \in C(Y)$, it follows that
$$\w_{\alpha(f)} \leq \w_f \circ \w_u.$$
For noncommutative considerations assume that $\Aa,\Bb$ are two separable, unital $C^*$-algebras and $K,L$ are compact generating sets for $\Aa$ and $\Bb$ (resp.).
The argument given above serves as a motivation for the following:
\begin{df} A *-homomorphism $\alpha: \Aa \to \Bb$ is called \textit{uniformly continuous}, if there exists $\w \in \Omega$ such that for any $a \in \Aa$ we have
$$\w^L_{\alpha(a)} \leq \w^K_a \circ \w.$$
We denote $\w_{\alpha}^{K,L}=\inf \{\w \in \Omega: \w^L_{\alpha(a)} \leq \w^K_a \circ \w, a \in \Aa\}.$
\end{df}

\begin{fact} The definition above does not depend from the choice of compact sets $K,L$ generating $\Aa$ and $\Bb$.
\end{fact}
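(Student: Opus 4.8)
The plan is to show that the inequality in the definition transfers between any two pairs of compact generating sets, which yields independence by symmetry. So suppose $\alpha$ satisfies the condition for one pair $K\subset\Aa$, $L\subset\Bb$, witnessed by some $\w\in\Omega$ with $\w^L_{\alpha(a)}\leq\w^K_a\circ\w$ for all $a\in\Aa$, and let $K'\subset\Aa$, $L'\subset\Bb$ be another pair of compact generating sets; I want to produce a witness $\w'\in\Omega$ for this pair. The only ingredients needed beyond the hypothesis are the ``cocycle'' inequality \eqref{cykl} and the observation --- already used in the paper --- that a compact subset of a $C^*$-algebra is automatically bounded and equicontinuous (it sits inside a compact generating set, against which it is trivially equicontinuous, and equicontinuity is independent of the chosen generating set). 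Consequently the comparison moduli $\w^K_{K'}$ and $\w^{L'}_L$ are well defined and lie in $\Omega$, and \eqref{cykl} may be applied with $K'$, resp.\ $L$, in the role of the equicontinuous bounded set.

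Concretely, I would first record the two instances of \eqref{cykl} that are needed: for the singleton $\{\alpha(a)\}\subset\Bb$ with the metrics $d_L$ and $d_{L'}$ it gives $\w^{L'}_{\alpha(a)}\leq\w^L_{\alpha(a)}\circ\w^{L'}_L$, and for the singleton $\{a\}\subset\Aa$ with the metrics $d_{K'}$ and $d_K$ it gives $\w^K_a\leq\w^{K'}_a\circ\w^K_{K'}$. Then, using that $\Omega$ is closed under composition and that a pointwise inequality is preserved when one composes a fixed function onto it, I would chain
$$\w^{L'}_{\alpha(a)} \leq \w^{L}_{\alpha(a)} \circ \w^{L'}_L \leq \big(\w^{K}_a \circ \w\big) \circ \w^{L'}_L = \w^{K}_a \circ \big(\w \circ \w^{L'}_L\big) \leq \w^{K'}_a \circ \big(\w^{K}_{K'} \circ \w \circ \w^{L'}_L\big).$$
Setting $\w':=\w^K_{K'}\circ\w\circ\w^{L'}_L\in\Omega$, this reads $\w^{L'}_{\alpha(a)}\leq\w^{K'}_a\circ\w'$ for every $a\in\Aa$, so $\alpha$ is uniformly continuous with respect to $K'$ and $L'$ as well. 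The point that makes this legitimate is that $\w'$ is a \emph{single} function, independent of $a$: it is assembled only from the fixed witness $\w$ and the two comparison moduli of the chosen sets.

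Since $(K,L)$ and $(K',L')$ enter the argument symmetrically, this proves that the property does not depend on the choice of compact generating sets; along the way the witness is seen to transform under a change of generating sets by the rule $\w\mapsto\w^K_{K'}\circ\w\circ\w^{L'}_L$. The step I expect to require the most care --- really the only nontrivial point --- is the orientation of \eqref{cykl}: on the target side one moves \emph{forward} from $d_L$ to $d_{L'}$, while on the source side one must move \emph{backward} from $d_{K'}$ to $d_K$ (because the hypothesis carries $\w^K_a$ while the conclusion is phrased with $\w^{K'}_a$); once this is fixed and one has checked that the composites stay in $\Omega$, the argument is purely formal.
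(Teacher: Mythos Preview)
Your argument is correct and follows the same route as the paper: apply the cocycle inequality \eqref{cykl} once on the target side (to pass from $d_L$ to $d_{L'}$) and once on the source side (to pass from $d_K$ to $d_{K'}$), then read off the new witness $\w'=\w^K_{K'}\circ\w\circ\w^{L'}_L$. The paper's proof is exactly this chain of inequalities, stated more tersely and with the minimal witness $\w_{\alpha}^{K,L}$ in place of your generic $\w$; your additional remarks on closure of $\Omega$ under composition and on the orientation of the two cocycle applications are correct and make the argument more explicit.
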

\begin{proof} Suppose that $\alpha:(\Aa,K) \to (\Bb,L)$ is a uniformly continuous *-homo\-morphism and $K',L'$ are other compact sets generating $\Aa,\Bb$ (resp.). It follows that $\w_{\alpha(a)}^L \leq \w_a^K \circ \w_{\alpha}^{K,L}$ and we can estimate
\begin{equation} \w_{\alpha(a)}^{L'} \leq \w_{\alpha(a)}^L \circ \w_L^{L'} \leq \w_a^K \circ \w_{\alpha}^{K,L} \circ \w_L^{L'} \leq \w_a^{K'} \circ \w_{K'}^K \circ \w_{\alpha}^{K,L} \circ \w_L^{L'}
\end{equation}
which proves the uniform continuity of $\alpha$ as a mapping $(\Aa,K') \to (\Bb,L')$: in fact we get that $\w_{\alpha}^{K',L'} \leq \w^K_{K'} \circ w_{\alpha}^{K,L} \circ \w_L^{L'}$.
\end{proof}

If $\alpha: \Aa_1 \to \Aa_2,\beta: \Aa_2 \to \Aa_3$ are two *-homomorphisms of $C^*$-algebras and
$K_i,\ i=1,2,3$ are compact sets generating $\Aa_i$ then it follows:
$$\w_{\beta \circ \alpha(a)}^{K_3} \leq \w_{\alpha(a)}^{K_2} \circ w_{\beta} \leq \w_a^{K_1} \circ (\w_{\alpha} \circ \w_{\beta})$$
which shows that $\w_{\beta \circ \alpha} \leq \w_{\alpha} \circ \w_{\beta}$. In particular we see that the composition of uniformly continuous morphisms is again uniformly continuous.

Theorem \ref{omegi} shows that the identity morphism 
$$\id:(\Aa,K) \to (\Aa,K')$$ is uniformly continuous. In fact, the more general statement is valid:

\begin{thm} If $\alpha:\Aa \to \Bb$ is a *-homomorphism then $\alpha$ is uniformly continuous.
\end{thm}
\begin{proof} Fix two compact sets $K,L$ generating $\Aa$ and $\Bb$ respectively and let $a_0 \in \Aa$. We claim that for $t \geq 0$:
\begin{equation}\label{jednostajna}
f_{\alpha(a_0)}^L(t) \leq f^K_{a_0} \circ f^L_{\alpha(K)}(t).
\end{equation}
Fix $\tau_1,\tau_2 \in \Rep(\Bb)$ such that $d_L(\tau_1,\tau_2) \leq t$ and put $\pi_i:=\tau_i \circ \alpha \in \Rep(\Aa), \ i=1,2$. Then
$$\sup\{\|\tau_1(\alpha(a))-\tau_2(\alpha(a))\|: a \in K\} \leq $$ $$ \leq
\sup\{\|\tau(\alpha(a))-\tau'(\alpha(a))\|: a \in K, d_L(\tau,\tau') \leq t\}.$$
The left hand side of the above inequality is $d_K(\tau_1 \circ \alpha,\tau_2 \circ \alpha)=d_K(\pi_1,\pi_2)$ and the right hand side is $\sup\{\|\tau(b)-\tau'(b)\|: b \in \alpha(K), d_L(\tau,\tau') \leq t\}=f^L_{\alpha(K)}(t)$. Therefore we get that if $d_L(\tau_1,\tau_2) \leq t$ then:
\begin{equation} \label{zawieranie}
d_K(\pi_1,\pi_2) \leq f^L_{\alpha(K)}(t).
\end{equation}
Now the left hand side of inequality (\ref{jednostajna}) is equal to $\sup\{\|\tau(\alpha(a_0)-\tau'(\alpha(a_0))\|: d_L(\tau,\tau') \leq t\}$ while the right hand side is $\sup\{\|\pi(a_0)-\pi'(a_0)\|: d_K(\pi,\pi') \leq f^L_{\alpha(K)}(t)\}$. Thus (\ref{zawieranie}) shows that the set over which we take supremum on the left hand side of (\ref{jednostajna}) is contained in the set over which we take supremum on the right hand side of (\ref{jednostajna}) and for $\tau_i,\pi_i$ as above we have $\|\pi_1(a_0)-\pi_2(a_0)\|=\|\tau_1(\alpha(a_0))-\tau_2(\alpha(a_0))\|$ proving (\ref{jednostajna}). \par
It suffices to take suprema in (\ref{jednostajna}) to conclude $\w^L_{\alpha(a_0)} \leq \w^K_{\alpha(a_0)} \circ \w^L_{\alpha(K)}$. In particular $\w^{K,L}_{\alpha} \leq \w^L_{\alpha(K)}$.
\end{proof}
We can also consider an alternative definition:
\begin{df} A *-homomorphism $\alpha: \Aa \to \Bb$ is called \textit{uniformly continuous} if the mapping
$$\alpha^*:(\Rep(\Bb),d_L) \ni \pi \mapsto \pi \circ \alpha \in (\Rep(\Aa),d_K)$$
is uniformly continuous, as a mapping between metric spaces. \par
If $\alpha$ is an epimorphism we can also consider the mapping from $\Sigma(\Bb)$ to $\Sigma(\Aa)$ and require this mapping to be uniformly continuous.
\end{df}

Note that if $\alpha: \Aa \to \Bb$ is an epimorphism then the induced map $\alpha^*$ is a monomorphism as a mapping $\Sigma(\Bb) \to \Sigma(\Aa)$ as well as $\Rep(\Bb) \to \Rep(\Aa)$.
If $\{\pi_n\}_n \subset \Rep(\Bb)$ converges to $\pi \in \Rep(\Bb)$ in the point-norm topology (thus in the topology of $d_K$ for any compact generating set $K$) then for every $b \in \Bb$ we have $\pi_n(b) \to \pi(b)$. Thus if $\alpha: \Aa \to \Bb$ is a *-homomorphism then in particular for every $a \in \Aa$ we have $\pi_n(\alpha(a)) \to \pi(\alpha(a))$ thus $\alpha^*(\pi_n)=\pi_n \circ \alpha \to \pi \circ \alpha=\alpha^*(\pi) $ in the point-norm topology. Therefore $\alpha^*$ is always a \textit{continuous} mapping.

If $\alpha: \Aa \to \Bb,\beta: \Bb \to \Cc$ are two *-homomorphisms of $C^*$-algebras then we have $(\beta \circ \alpha)^*=\alpha^* \circ \beta^*$ thus the composition of uniformly continuous *-homomorphisms is still uniformly continuous *-homomorphism. In fact the following theorem is true:

\begin{thm} If $\alpha:\Aa \to \Bb$ is a *-homomorphism then $\alpha$ is uniformly continuous.
\end{thm}
\begin{proof}
First let us note that the above definition does not depend from the choice of compact generating sets $K$ i $L$: indeed, if $K' \subset \Aa, L' \subset \Bb$ are other compact generating sets then from Theorem ~\ref{rownowazne} the metrics $d_K$ and $d_{K'}$ as well as $d_L$ and $d_{L'}$ are pairwise  uniformly equivalent. Therefore the uniform continuity of $\alpha^*$ does not depend from the choice of metrics $d_K,d_L$. Thus let $K \subset \Aa, L \subset \Bb$ be two compact generating sets and assume that $\alpha(K) \subset L$. Therefore for $\pi,\pi' \in \Rep(\Bb)$ we have:
\begin{gather*} d_K(\alpha^{*}(\pi),\alpha^{*}(\pi'))=d_K(\pi \circ \alpha, \pi' \circ \alpha)=
\sup_{a \in K}\|\pi(\alpha(a))-\pi'(\alpha(a))\| \\
\leq \sup_{b \in L}\|\pi(b)-\pi'(b)\|=d_L(\pi,\pi')
\end{gather*}
which shows that the mapping $\alpha^*$ is uniformly continuous.
\end{proof}

Thus for an arbitrary *-homomorphism $\alpha: \Aa \to \Bb$ the mapping $$\alpha^*:\Big(\Rep(\Bb),d_L \Big) \to \Big(\Rep(\Aa),d_K \Big)$$ has its modulus of continuity which will be denoted by $\tilde{\w}_{\alpha}^{K,L}$. Then the following inequality holds:
\begin{equation} \label{modul}
d_K(\pi \circ \alpha,\pi' \circ \alpha) \leq \tilde{\w}_{\alpha}^{K,L}\big(d_L(\pi,\pi')\big), \ \ \pi,\pi' \in \Rep(\Bb).
\end{equation}
\begin{fact} Using the above notation the following equality holds: $\w_{\alpha}^{K,L}=\tilde{\w}_{\alpha}^{K,L}$.
\end{fact}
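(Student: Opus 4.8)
The plan is to collapse both quantities onto a single explicit object, the least concave majorant $\w^L_{\alpha(K)}$ of the function $f^L_{\alpha(K)}$. First note this object is legitimate: $\alpha(K)$ is compact, hence bounded and (being compact) equicontinuous, so $f^L_{\alpha(K)}$ satisfies the hypotheses of Corollary \ref{AP'} and $\w^L_{\alpha(K)}\in\Omega$. Dealing with $\tilde\w_\alpha^{K,L}$ is then pure unwinding of definitions: $\tilde\w_\alpha^{K,L}$ is by definition the minimal element of $\Omega$ dominating the modulus function $t\mapsto\sup\{d_K(\pi\circ\alpha,\pi'\circ\alpha):\pi,\pi'\in\Rep(\Bb),\ d_L(\pi,\pi')\le t\}$ of the (already known to be uniformly continuous) map $\alpha^*$; but for any $\pi,\pi'$ one has $d_K(\pi\circ\alpha,\pi'\circ\alpha)=\sup_{a\in K}\|\pi(\alpha(a))-\pi'(\alpha(a))\|=\sup_{b\in\alpha(K)}\|\pi(b)-\pi'(b)\|$, so that modulus function is literally $f^L_{\alpha(K)}$, and hence $\tilde\w_\alpha^{K,L}=\w^L_{\alpha(K)}$.

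It then remains to identify $\w_\alpha^{K,L}$ with $\w^L_{\alpha(K)}$ as well. The inequality $\w_\alpha^{K,L}\le\w^L_{\alpha(K)}$ is free: the computation in the proof of the first of the two theorems above asserting that every $*$-homomorphism is uniformly continuous shows $\w^L_{\alpha(a)}\le\w^K_a\circ\w^L_{\alpha(K)}$ for every $a\in\Aa$, so $\w^L_{\alpha(K)}$ lies in the family over which $\w_\alpha^{K,L}$ is the infimum. For the reverse inequality I would fix any $\w\in\Omega$ with $\w^L_{\alpha(a)}\le\w^K_a\circ\w$ for all $a\in\Aa$ and show $\w^L_{\alpha(K)}\le\w$. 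The crucial observation is that for $a\in K$ the coordinate function $\hat a$ is contractive for $d_K$, so $f^K_a(s)\le s$, whence $\w^K_a\le\id$ by minimality of $\w^K_a$ (the identity belongs to $\Omega$). Therefore $\w^L_{\alpha(a)}\le\w^K_a\circ\w\le\w$ for every $a\in K$; since $f^L_{\alpha(K)}=\sup_{a\in K}f^L_{\alpha(a)}\le\sup_{a\in K}\w^L_{\alpha(a)}\le\w$, minimality of $\w^L_{\alpha(K)}$ gives $\w^L_{\alpha(K)}\le\w$. Taking the infimum over all admissible $\w$ yields $\w^L_{\alpha(K)}\le\w_\alpha^{K,L}$, and combining everything gives $\w_\alpha^{K,L}=\w^L_{\alpha(K)}=\tilde\w_\alpha^{K,L}$.

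I expect the only genuinely delicate step to be the collapse $\w^K_a\circ\w\le\w$ for $a\in K$: it is what undoes the outer composition with $\w^K_a$, and it rests entirely on the normalisation that $K$ consists of $1$-Lipschitz coordinate functions $\hat a$ for $d_K$, together with the minimality characterisation of $\w^K_a$. Everything else is routine bookkeeping with suprema and the closure of $\Omega$ under composition and pointwise suprema of bounded equicontinuous families. It is worth remarking along the way that $\alpha(K)$ need not generate $\Bb$, but this is harmless, since $f^L_{\alpha(K)}$ only records how the set $\alpha(K)$ is displaced by pairs of representations, and $\alpha(K)$ is automatically bounded and equicontinuous.
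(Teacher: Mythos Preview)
Your proof is correct and rests on the same two observations the paper uses: that $d_K(\pi\circ\alpha,\pi'\circ\alpha)=\sup_{b\in\alpha(K)}\|\pi(b)-\pi'(b)\|$, and that $\w^K_a\le\id$ for $a\in K$. The organization, however, is genuinely different. The paper argues the two inequalities $\w_\alpha^{K,L}\le\tilde\w_\alpha^{K,L}$ and $\tilde\w_\alpha^{K,L}\le\w_\alpha^{K,L}$ directly: for the first it shows $f^L_{\alpha(a)}\le f^K_a\circ\tilde\w_\alpha^{K,L}$ by a supremum-inclusion argument, and for the second it verifies that $\w_\alpha^{K,L}$ itself is a modulus for $\alpha^*$, using $\w^K_a\le\id$ on $K$ exactly as you do. You instead route both quantities through the explicit object $\w^L_{\alpha(K)}$, proving $\tilde\w_\alpha^{K,L}=\w^L_{\alpha(K)}=\w_\alpha^{K,L}$.

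What your approach buys is a concrete formula for the common value, and a cleaner handling of the infimum: by fixing an arbitrary admissible $\w$ and showing $\w^L_{\alpha(K)}\le\w$ before taking the infimum, you sidestep the (minor) issue of whether $\w_\alpha^{K,L}$ itself lies in the admissible family it is the infimum of, which the paper's second inequality tacitly uses. The paper's direct comparison is slightly shorter but less informative. Your closing remark that $\alpha(K)$ need not generate $\Bb$ is well placed; the paper does not comment on this.
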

\begin{proof} For the proof of the inequality $\w_{\alpha}^{K,L} \leq \tilde{\w}_{\alpha}^{K,L}$ it suffices to show that for any $a \in \Aa$, $f^L_{\alpha(a)} \leq f^K_a \circ \tilde{\w}_{\alpha}^{K,L}$---it is thus enough to prove that for  $t \geq 0$ we have:
\begin{equation}\label{suprema}
\sup_{d_L(\pi,\pi') \leq t}\|\pi(\alpha(a))-\pi'(\alpha(a))\|   \leq \sup\{\|\tau(a)-\tau'(a)\|: d_K(\tau,\tau') \leq \tilde{\w}_{\alpha}^{K,L}(t)\}.
\end{equation}
Take $\pi,\pi' \in \Rep(\Bb)$ satisfying $d_L(\pi,\pi') \leq t$. Then putting $\tau:=\pi \circ \alpha,\tau':=\pi' \circ \alpha$ we infer from ~\eqref{modul} that:
$$d_K(\tau,\tau') \leq \tilde{\w}_{\alpha}^{K,L}\big(d_L(\pi,\pi')\big) \leq \tilde{\w}_{\alpha}^{K,L}(t).$$
Hence $\tau,\tau'$ belong to the set over which we take supremum on the right hand side of \eqref{suprema} which proves \eqref{suprema}. \par
For the proof of the opposite inequality it is enough to show that for $\pi,\pi' \in \Rep(\Bb)$  we have
\begin{equation}\label{przeciwna}
d_K(\pi \circ \alpha,\pi' \circ \alpha) \leq \w_{\alpha}^{K,L}\big(d_L(\pi,\pi')\big).
\end{equation}
If $a_0 \in K$ and $\pi,\pi' \in \Rep(\Aa)$ satisfy $d_K(\pi,\pi') \leq t$ then
$\|\pi(a_0)-\pi'(a_0)\| \leq t$ and taking supremum over all such representations we obtain $f_{a_0}^K(t) \leq t$ thus for any $t \geq 0$ we conclude $\w_{a_0}^K(t) \leq t$. Thus for any  $a \in K$ and any $\pi,\pi' \in \Rep(\Bb)$ we get 
$$\|\pi(\alpha(a))-\pi'(\alpha(a))\| \leq \w^L_{\alpha(a)}\big(d_L(\pi,\pi')\big) \leq $$
$$ \leq (\w_a^K \circ \w_{\alpha}^{K,L}) \big(d_L(\pi,\pi')\big) \leq \w^{K,L}_{\alpha}\big(d_L(\pi,\pi')\big),$$
and taking supremum over $a \in K$ we get
$$d_K(\pi \circ \alpha,\pi' \circ \alpha) \leq \w_{\alpha}^{K,L}\big(d_L(\pi,\pi')\big),$$ which proves ~\eqref{przeciwna}.
\end{proof}
The above result shows that both definitions of uniform continuity produce the same modulus of continuity for a *-homomorphism $\alpha:\Aa \to \Bb$. Thus we are allowed to write $\w_{\alpha}^{K,L}$ or shortly $\w_{\alpha}$ when it is understood which compact generating sets $K,L$ we have chosen.
\subsection{Convergence of morphisms}
On the set $\Hom(\Aa,\Bb)$ we can consider:
\begin{itemize}
\item The point-norm topology: in this topology convergence $\alpha_s \to \alpha$ holds iff $\alpha_s(a) \stackrel{\| \cdot \|}{\longrightarrow} \alpha(a)$ for every $a \in \Aa$,
\item compact-open topology: in this topology we have that $\alpha_s \to \alpha$ iff for any compact set $K \subset \Aa$ we have $$\sup_{a \in K}\|\alpha_s(a)-\alpha(a)\| \to 0.$$
\end{itemize}
We can also define a topology using a metric in a similar fashion as we did for the set $\Rep(\Aa)$, namely: we choose a compact generating set $K \subset \Aa$ and put for $\alpha,\beta \in \Hom(\Aa,\Bb)$
\begin{equation}\label{metryka}
d_K(\alpha,\beta):=\sup\{\|\alpha(a)-\beta(a)\|: a \in K\}.
\end{equation}

\begin{fact} The formula \eqref{metryka} defines a metric on the set $\Hom(\Aa,\Bb)$.
\end{fact}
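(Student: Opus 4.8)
The plan is to mimic, essentially verbatim, the proof of Fact~\ref{metryka1}, since $\Hom(\Aa,\Bb)$ here plays the role that $\Rep(\Aa)$ played there. First I would check that $d_K$ takes finite values: every unital $*$-homomorphism between $C^*$-algebras is norm-contractive, so for $a \in K$ we have $\|\alpha(a)-\beta(a)\| \leq \|\alpha(a)\|+\|\beta(a)\| \leq 2\|a\|$, and taking the supremum over the bounded set $K$ gives $d_K(\alpha,\beta) \leq 2\sup_{a \in K}\|a\| < \oo$. Non-negativity and symmetry are immediate from the definition, and the triangle inequality follows from subadditivity of suprema exactly as in Fact~\ref{metryka1}: for $\alpha,\beta,\gamma \in \Hom(\Aa,\Bb)$ and $a \in K$ one has $\|\alpha(a)-\gamma(a)\| \leq \|\alpha(a)-\beta(a)\|+\|\beta(a)-\gamma(a)\| \leq d_K(\alpha,\beta)+d_K(\beta,\gamma)$, and passing to the supremum over $a \in K$ yields $d_K(\alpha,\gamma) \leq d_K(\alpha,\beta)+d_K(\beta,\gamma)$.

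The only point needing a short argument is the identity of indiscernibles. If $d_K(\alpha,\beta)=0$ then $\alpha(a)=\beta(a)$ for every $a \in K$. Since $\alpha$ and $\beta$ preserve all algebraic operations and the involution, they agree on the $*$-algebra $\Aa_0$ generated by $K \cup \{1\}$; as $K$ generates $\Aa$, the set $\Aa_0$ is dense in $\Aa$, and both maps are norm-continuous (again by contractivity), so they agree on all of $\Aa$, i.e.\ $\alpha=\beta$. I do not expect any genuine obstacle: this is the same density-plus-continuity argument already used repeatedly in the paper, e.g.\ in the completeness theorem for $(\Rep(\Aa),d_K)$ and in Theorem~\ref{punktnorm}.

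One could additionally remark, paralleling the Remarks after Fact~\ref{metryka1}, that only boundedness of $K$ is used to verify the metric axioms (and $\diam \Hom(\Aa,\Bb) \leq 2\diam(K)$), while the generating property of $K$ enters solely to guarantee that distinct homomorphisms are separated; if $K$ is merely bounded but not generating, $d_K$ is only a pseudometric.
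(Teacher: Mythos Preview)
Your proposal is correct and matches the paper's approach exactly: the paper's proof of this Fact consists solely of the sentence ``The proof is similar to the proof of Fact~\ref{metryka1},'' and what you have written is precisely that argument spelled out in the $\Hom(\Aa,\Bb)$ setting.
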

\begin{proof} The proof is similar to the proof of the Fact ~\ref{metryka1}.
\end{proof}

Moreover the topology of $d_K$ on $\Hom(\Aa,\Bb)$ behaves similarly as in the case of $\Rep(\Aa)$---we have the following:

\begin{thm} The topology of $d_K$ coincides with the point-norm topology and with the compact open topology.
\end{thm}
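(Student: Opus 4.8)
The plan is to imitate the proof of Theorem~\ref{punktnorm} almost verbatim. The only properties of an element $\pi \in \Rep(\Aa)$ used there are that $\pi$ is a $*$-homomorphism and that it is norm-decreasing; both hold for any $\alpha \in \Hom(\Aa,\Bb)$, since a $*$-homomorphism between $C^*$-algebras is automatically contractive. Because the point-norm and compact-open topologies are not a priori metrisable, I would argue with nets throughout, using that two topologies on a set coincide iff they have the same convergent nets with the same limits (a topology is determined by which nets converge to which points). Concretely I would close the cycle of implications
\[
\text{compact-open}\ \Longrightarrow\ d_K\ \Longrightarrow\ \text{point-norm}\ \Longrightarrow\ \text{compact-open}.
\]

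The first implication is immediate: $K$ itself is a compact subset of $\Aa$, so compact-open convergence $\alpha_s \to \alpha$ forces $\sup_{a \in K}\|\alpha_s(a)-\alpha(a)\| \to 0$, which is exactly $d_K(\alpha_s,\alpha)\to 0$. For the implication $d_K \Rightarrow$ point-norm I would let $\Aa_0$ be the dense $*$-subalgebra generated by $K \cup \{1\}$; since each $\alpha_s$ preserves the algebraic operations and polynomial maps are norm-continuous, $d_K$-convergence propagates to $\alpha_s(x) \to \alpha(x)$ for every $x \in \Aa_0$, and then a three-$\eps$ estimate of the shape
\[
\|\alpha_s(x)-\alpha(x)\| \le 2\|x-x_k\| + \|\alpha_s(x_k)-\alpha(x_k)\|, \qquad x_k \in \Aa_0,\ x_k \to x,
\]
which uses contractivity of $\alpha_s$ and $\alpha$, extends this to all $x \in \Aa$. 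Finally, for point-norm $\Rightarrow$ compact-open I would fix a compact $L \subset \Aa$ and $\eps>0$, choose a finite $\tfrac{\eps}{3}$-net $\{a_1,\dots,a_N\}$ in $L$, use directedness of the index set to find a single $s_0$ beyond which $\|\alpha_s(a_i)-\alpha(a_i)\| \le \tfrac{\eps}{3}$ for all $i$, and then for an arbitrary $a \in L$ interpolate through the nearest $a_{i_0}$ to obtain $\|\alpha_s(a)-\alpha(a)\| \le \eps$ uniformly in $a$. This closes the cycle, so all three topologies agree.

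I do not expect a genuine obstacle here: the whole argument is routine once Theorem~\ref{punktnorm} is available, and it transfers because the target is again a $C^*$-algebra $\Bb$ rather than $\Bb(\ell^2)$ specifically. The only points deserving a little care are the systematic use of nets — in particular invoking directedness of the index set to extract a common threshold $s_0$ for the finitely many conditions coming from the $\tfrac{\eps}{3}$-net — and the observation that every $\alpha \in \Hom(\Aa,\Bb)$ is automatically contractive, which is precisely what validates the approximation step in the $d_K \Rightarrow$ point-norm direction.
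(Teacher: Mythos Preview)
Your proposal is correct and mirrors the paper's approach exactly: the paper's own proof of this statement is literally ``See the proof of Theorem~\ref{punktnorm},'' and you have simply spelled out that transfer in detail, correctly noting that contractivity of $*$-homomorphisms and directedness of the index set are the only ingredients to check.
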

\begin{proof} See the proof of the Theorem \ref{punktnorm}.
\end{proof}

\begin{rem} We can also define the convergence $\alpha_n \to \alpha$ using the condition $\alpha_n^* \rightrightarrows \alpha^*$. As for any $x \in \Aa$ we have $\|x\|=\sup\{\|\pi(x)\|: \pi \ \textup{-irreducible representation}\}=\sup\{\|\pi(x)\|: \pi \in \Rep(\Aa)\}$, we get
\begin{gather*} \sup_{\pi \in \Sigma(\Bb)}d_K(\pi \circ \alpha,\pi \circ \beta)=\sup\{\|\pi(\alpha(a)-\pi(\beta(a))\|: a\in K, \pi \in \Sigma(\Bb)\}= \\
=\sup_{a \in K}\|\alpha(a)-\beta(a)\|=d_K(\alpha,\beta)
\end{gather*}
thus so defined convergence coincides with the point-norm convergence (irrelevant whether we view $\alpha$ as acting between $\Sigma(-)$'s or $\Rep(-)$'s).
\end{rem}

Finally we can show that the space $(\Hom(\Aa,\Bb),d_K)$ is complete using a similar proof as for $(\Rep(\Aa),d_K)$. Let us note that the above results do not need the assumption of $\Bb$ being separable---in particular for $\Bb=\Bb(\ell^2)$ we get our previous results for representations. \\
If $\Aa,\Bb$ are commutative $C^*$-algebras then $\Aa \cong C(X)$ and $\Bb \cong C(Y)$ for some compact spaces $X,Y$. Then the set $\Hom(\Aa,\Bb)$ can be identified with $C(Y,X)$ (all continuous mappings from $Y$ to $X$). Therefore we can think of elements of $\Hom(\Aa,\Bb)$ as continuous mappings between  ,,noncommutative'' (nonexistent) compact spaces. In such circumstances we can ask about an analogue of  the Ascoli theorem, i.e. about finding necessary and sufficient conditions  for a given family $\FFf \subset \Hom(\Aa,\Bb)$ to be (pre)compact.
We will use the notation $\FFf(a):=\{\alpha(a): \alpha \in \FFf \} \subset \Bb$.

\begin{thm} Let $\Aa,\Bb$ be two unital separable $C^*$-algebras and assume that $\FFf \subset \Hom(\Aa,\Bb)$. The following conditions are equivalent:
\begin{itemize}
\item The family $\FFf$ is precompact (in the point-norm topology).
\item For every $a \in \Aa$ the family $\FFf(a) \subset \Bb$ is precompact.
\end{itemize}
\end{thm}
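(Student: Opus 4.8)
The plan is to transport the problem to a space of the form $C(K,\Bb)$ and invoke the classical Ascoli--Arzel\`a theorem quoted above. First I fix a compact generating set $K\subset\Aa$. By the results of the previous section the point-norm topology on $\Hom(\Aa,\Bb)$ coincides with the topology of $d_K$, and $(\Hom(\Aa,\Bb),d_K)$ is a complete metric space; so ``precompact'' may be read as ``totally bounded''. The key observation is that the restriction map $\Phi\colon\Hom(\Aa,\Bb)\to C(K,\Bb)$, $\Phi(\alpha)=\alpha|_K$, is well defined (each $\alpha$ is Lipschitz, hence $\alpha|_K$ is continuous), injective (two $*$-homomorphisms agreeing on $K$ agree on the $*$-algebra generated by $K$, hence on all of $\Aa$ by continuity, exactly as in the proof of Fact~\ref{metryka1}), and isometric, since $d_K(\alpha,\beta)=\sup_{a\in K}\|\alpha(a)-\beta(a)\|=\|\alpha|_K-\beta|_K\|_{\infty}$. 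Thus $\FFf$ is totally bounded in $(\Hom(\Aa,\Bb),d_K)$ if and only if $\Phi(\FFf)$ is totally bounded in $C(K,\Bb)$.

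The forward implication is immediate and does not use any of this machinery: if $\FFf$ is precompact then $\cl{\FFf}$ is compact in the point-norm topology, and for every $a\in\Aa$ the evaluation $\hat{a}\colon\Hom(\Aa,\Bb)\to\Bb$, $\hat{a}(\alpha)=\alpha(a)$, is continuous (this is built into the definition of the point-norm topology), so $\FFf(a)=\hat{a}(\FFf)\subset\hat{a}(\cl{\FFf})$ is contained in a compact set and is therefore precompact.

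For the reverse implication I assume $\FFf(a)$ is precompact for every $a\in\Aa$, in particular for every $a\in K$. I would then check the two hypotheses of the Ascoli--Arzel\`a theorem for the family $\Phi(\FFf)\subset C(K,\Bb)$, with $K$ playing the role of the compact metric space $X$ and $\Bb$ that of the metric space $Y$. Equicontinuity is automatic and uniform: every $*$-homomorphism is contractive, so $\|\alpha(a)-\alpha(b)\|=\|\alpha(a-b)\|\le\|a-b\|$ for all $\alpha\in\FFf$ and $a,b\in K$, i.e.\ $\Phi(\FFf)$ is uniformly $1$-Lipschitz. Pointwise relative compactness is precisely the hypothesis: for $a\in K$ the set $\{\alpha(a):\alpha\in\FFf\}=\FFf(a)$ is precompact in $\Bb$, and since $\Bb$ is complete its closure is compact. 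Ascoli's theorem then gives that $\Phi(\FFf)$ is relatively compact in $C(K,\Bb)$ for the uniform topology, hence totally bounded; by the isometry $\Phi$ so is $\FFf$ in $(\Hom(\Aa,\Bb),d_K)$; and since $(\Hom(\Aa,\Bb),d_K)$ is complete, a totally bounded set has compact closure, so $\FFf$ is precompact.

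I do not expect a serious obstacle here: the argument is essentially ``classical Ascoli plus the fact that morphisms of $C^*$-algebras are automatically equi-Lipschitz'', the latter being what lets us dispense with a separate equicontinuity hypothesis (contrast with the Ascoli property for subsets of $\Aa$ studied earlier, where equicontinuity is a genuine extra condition). The only points requiring a little care are purely formal: that $\Phi$ is an isometry onto its image, and the passage from total boundedness of $\FFf$ back to precompactness, which relies on completeness of $(\Hom(\Aa,\Bb),d_K)$ proved above. One may also remark that the proof in fact uses only precompactness of $\FFf(a)$ for $a$ in one compact generating set $K$; the stated condition ``for every $a\in\Aa$'' is then recovered for free from the forward implication.
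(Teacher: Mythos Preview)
Your proof is correct, but it takes a genuinely different route from the paper's. The paper argues the reverse implication by a Tychonoff-style embedding: it sends $\alpha\mapsto(\alpha(a))_{a\in\Aa}$ into the full product $\prod_{a\in\Aa}\cl{\FFf(a)}$ with the product topology, checks that this is a closed embedding (exactly as in the proof of Theorem~\ref{subh}), and concludes compactness of $\cl{\FFf}$ from Tychonoff. You instead restrict to a compact generating set $K$, embed isometrically into $C(K,\Bb)$, and invoke the classical Ascoli--Arzel\`a theorem, using the automatic $1$-Lipschitz property of $*$-homomorphisms for equicontinuity and the completeness of $(\Hom(\Aa,\Bb),d_K)$ to pass from total boundedness back to precompactness. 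Your approach is arguably more in the spirit of the surrounding section (it makes explicit use of the $d_K$ machinery and highlights that only $\FFf(a)$ for $a\in K$ is needed), while the paper's Tychonoff argument is shorter and avoids both the Ascoli theorem and any appeal to completeness of $\Hom(\Aa,\Bb)$.
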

\begin{proof}
Since for every $a \in \Aa$ the mapping
$$\FFf \ni \alpha \mapsto \alpha(a) \in \FFf(a) \subset \Bb$$
is continuous, hence if $\FFf$ is precompact then $\FFf(a)$ also has this property. \par
Conversely, suppose that for every $a \in \Aa$ the family $\FFf(a) \subset \Bb$ is precompact. Consider the mapping
$$\iota: \cl{\FFf} \ni \alpha \mapsto \big(\alpha(a)\big)_{a \in \Aa} \in \prod_{a \in \Aa} \cl{\FFf(a)}$$
where in the codomain we consider the point-norm topology. Then $\iota$ is an embedding. Similarly as in the proof of the Theorem ~\ref{subh} we conclude that $\iota$ is closed. As we assume that for $a \in \Aa$ the set $\cl{\FFf(a)}$ is compact then from the Tychonoff's theorem the product $\prod_{a \in \Aa}\cl{\FFf(a)}$ is compact as well. Thus $\iota(\cl{\FFf})=\cl{\iota(\FFf)}$ is a closed set of the compact set thus is compact as well and this shows that $\FFf$ is precompact.
\end{proof}

\begin{cor} Suppose that $\Aa$ and $\Bb$ are two unital, separable $C^*$-algebras and let $\FFf  \subset \Hom(\Aa,\Bb)$.
\begin{itemize}
\item If $\Bb$ has Ascoli property then $\FFf$ is relatively compact if and only if for every $a \in \Aa$ the set $\FFf(a)$ is bounded, pointwise relatively compact and equicontinuous.
\item If $\Bb$ has strong Ascoli property then $\FFf$ is relatively compact if and only if for every $a \in \Aa$ the set $\FFf(a)$ is bounded and equicontinuous.
\end{itemize}
\end{cor}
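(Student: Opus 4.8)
The plan is to obtain this corollary as a formal consequence of the Ascoli-type theorem for $\Hom(\Aa,\Bb)$ proved just above, combined with the hypothesis that $\Bb$ enjoys the (strong) Ascoli property. No new construction is needed.

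First I would record the harmless identification of terminology: since $(\Hom(\Aa,\Bb),d_K)$ is a complete metric space whose topology coincides with the point-norm topology, a subset $\FFf$ is relatively compact exactly when it is precompact (totally bounded); the same remark applies inside $\Bb$, a complete space, to each fibre $\FFf(a):=\{\alpha(a):\alpha \in \FFf\}$. By the theorem preceding this corollary, $\FFf$ is precompact if and only if for every $a \in \Aa$ the set $\FFf(a) \subset \Bb$ is precompact, hence if and only if $\FFf(a)$ is relatively compact in $\Bb$ for every $a$.

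Now suppose $\Bb$ has the Ascoli property. By definition, for each fixed $a$ the relative compactness of $\FFf(a)$ in $\Bb$ is equivalent to $\FFf(a)$ being bounded, pointwise relatively compact and equicontinuous, i.e.\ to conditions (Asc1)--(Asc3). Quantifying over $a \in \Aa$ and combining with the previous paragraph gives precisely the first assertion. The second assertion is proved verbatim the same way, using that $\Bb$ has the strong Ascoli property to replace the characterisation of relative compactness of $\FFf(a)$ by conditions (sAsc1)--(sAsc2).

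There is essentially no obstacle here; the only point that requires a moment's attention, rather than any real work, is the interchangeability of \emph{precompact} and \emph{relatively compact}, which rests on the completeness of $\Hom(\Aa,\Bb)$ (noted earlier) and of $\Bb$, together with the standard fact that in a complete metric space total boundedness is equivalent to having compact closure. Thus the corollary follows at once from the Ascoli theorem for $\Hom(\Aa,\Bb)$ and the standing assumption on $\Bb$.
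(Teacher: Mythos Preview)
Your proposal is correct and matches the paper's approach exactly: the paper states this corollary without proof, since it follows immediately from the preceding theorem (precompactness of $\FFf$ is equivalent to precompactness of each $\FFf(a)$) together with the definition of the (strong) Ascoli property for $\Bb$. Your remark on completeness to pass between precompact and relatively compact is the only detail worth noting, and you handle it correctly.
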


On the set $\Hom(\Aa,\Bb)$ we can also define topology using the following notion of convergence: $\alpha_s \to \alpha$ if for any $\pi \in \Sigma(\Bb)$ we have $\pi \circ \alpha_s \to \pi \circ \alpha$ in the point-norm topology. In other words this is the pointwise convergence  $\alpha_s^* \to \alpha^*$ where $\alpha_s^*,\alpha^*:\Sigma(\Aa) \to \Rep(\Aa)$. If $\alpha_n \to \alpha$ in the point-norm topology then for every $a \in \Aa$ we get $\alpha_n(a) \to \alpha(a)$ and for every $\pi \in \Sigma(\Bb)$ we have $\pi(\alpha_n(a)) \to \pi(\alpha(a))$. The converse implication need not hold:
\begin{exs}
\begin{enumerate}
\item Let $\Aa_1=C(X),\  \Aa_2=C(Y)$ be two commutative $C^*$-algebras and $\alpha_n,\alpha:C(Y) \to C(X)$ be *-homomorphisms. Then there exist continuous mappings $u_n,u:X \to Y$ such that $\alpha_n(f)=f \circ u_n, \alpha(f)=f \circ u$. Then the point-norm convergence $\alpha^*_n \to \alpha^*$
is equivalent to the convergence $\|f \circ u_n-f \circ u\|_{\oo} \to 0$ (and this condition is equivalent to the uniform convergence $u_n \rightrightarrows u$). On the other hand, the convergence defined above is equivalent to the pointwise convergence $f \circ u_n \to f \circ u$  (which is equivalent to the pointwise convergence $u_n \to u$).
\item Let $\Aa=\Kk(\ell^2)^+$. Then the identity representation is irreducible and thus the point-norm convergence is equivalent to the above defined convergence.
\end{enumerate}
\end{exs}
Finally we can consider also the uniform convergence $\alpha^*_n \to \alpha^*$---however this yields nothing new, since it is then equivalent to the point-norm convergence (or convergence in the metric $d_K$). Indeed we have
$$\sup_{\pi \in \Sigma(\Bb)}d_K(\alpha^*_n(\pi),\alpha^*(\pi))=\sup_{\pi \in \Sigma(\Bb)}\sup_{a \in K} \|\pi(\alpha_n(a))-\pi(\alpha(a))\|=$$
$$=\sup_{a \in K}\|\alpha_n(a)-\alpha(a)\|=d_K(\alpha_n,\alpha).$$

\section{Comparison of moduli of continuity with respect to $\Sigma(-)$ and $\Rep(-)$}
Let $(X,d)$ be a metric space with diameter $R>0$ and let
$$K:=\{f:X \to [-\frac{R}{2},\frac{R}{2}]: \Lip(f) \leq 1\}$$
where $\Lip(f)$ denotes the Lipschitz constant for $f$. Equivalently one can put $K:=\{f:X \to \RRR: \Lip(f) \leq 1, f(a)=0 \ \textup{for some}\  a \}$.
Then $d(x,y)=\sup_{f \in K}|f(x)-f(y)|$. For $f:X \to \RRR$ we have that $f$ is a contraction, i.e. $\Lip(f) \leq 1$ if and only if there exists $c \in \RRR$ such that $f-c \in K$. Thus for a real valued function $f$ the condition $\Lip(f) \leq s$ is equivalent to the existence of  $c \in \RRR$ such that $\frac{1}{s}f-c \in K$. \\
Now fix two representations $\pi,\pi':C(X) \to \Bb(\ell^2)$ and take $f \in C(X;\RRR)$.
Therefore the (classical) modulus of continuity $\w:=\w_f$ for $f$ coincides with the modulus of continuity $\w_f^{\Sigma}$ for $f$ viewed as an element in $C^*$-algebra, constructed using irreducible representations and we have $\w_f=\w_f^{\Sigma} \leq \w_f^{\Rep}$ where $\w_f^{\Rep}$ is the modulus of continuity for $f$ constructed using all representations in $\Rep(C(X))$. We will show that indeed an equality holds for a real valued function. In order to do so we will need the so called \textit{Fenchel transform} (also called \textit{convex conjugate}). The context is the following: we consider a finite dimensional euclidean space $E$ with the inner product $\langle \cdot,- \rangle$ and functions $h:E \to [-\oo,\oo]$. We define the \textit{Fenchel conjugate} of $h$ by the formula
$$h^*(\varphi)=\sup_{x \in E}(\langle \varphi,x \rangle-h(x)), \quad h^*:E \to [-\oo,\oo].$$
If $h$ is not identically equal to $+\oo$ then $h^*(\varphi)>-\oo$ for every $\varphi \in E$. By the direct calculation one can check that $h^*$ is always a convex function and also that the inequality $h^{**} \leq h$ always holds. It is natural to ask when $h=h^{**}$ or in other words when $h=g^*$ for some function $g$. The answer is provided by the following theorem (see Thm. 4.2.1. in \cite{Bor}):
\begin{thm} [Fenchel-Moreau] Let $h:E \to (-\oo,\oo]$ be any function. The following conditions are equivalent:
\begin{enumerate}
\item $h^{**}=h$,
\item $h=g^*$ for some function $g:E \to (-\oo,\oo]$,
\item $h$ is convex and the set $\{(x,t) \in E \times (-\oo,\oo]:h(x) \leq t\}$ is closed,
\item for any $x \in E$, $h(x)=\sup\{\alpha(x): \alpha \leq h, \alpha-\textup{affine function}\}$.
\end{enumerate}
\end{thm}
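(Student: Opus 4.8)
The plan is to prove the cyclic chain of implications $(1)\Rightarrow(2)\Rightarrow(3)\Rightarrow(4)\Rightarrow(1)$, so that all the real work is concentrated in a single Hahn--Banach separation argument. Throughout, call $h$ \emph{proper} if it is not identically $+\oo$, and write $\textup{dom}\,h:=\{x\in E:h(x)<\oo\}$. The implication $(1)\Rightarrow(2)$ is immediate: if $h$ is proper then $g:=h^*$ maps into $(-\oo,\oo]$ (any point of $\textup{dom}\,h$ contributes a finite term to the supremum defining $h^*$) and $g^*=h^{**}=h$, while the case $h\equiv+\oo$ is trivial. For $(2)\Rightarrow(3)$, write $h(\varphi)=g^*(\varphi)=\sup_x\big(\langle\varphi,x\rangle-g(x)\big)$, the supremum being effectively over the nonempty set $\textup{dom}\,g$; for each such $x$ the map $\varphi\mapsto\langle\varphi,x\rangle-g(x)$ is continuous and affine, so $h$ is a pointwise supremum of continuous affine functions, hence convex and lower semicontinuous, and lower semicontinuity of a function is equivalent to closedness of its epigraph. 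For $(4)\Rightarrow(1)$, recall that $h^{**}\le h$ always holds; conversely, if $\alpha(x)=\langle\varphi_0,x\rangle-c$ is affine with $\alpha\le h$, then $c\ge\langle\varphi_0,x\rangle-h(x)$ for every $x$, so $c\ge h^*(\varphi_0)$ and hence $\alpha(x)\le\langle\varphi_0,x\rangle-h^*(\varphi_0)\le h^{**}(x)$; taking the supremum over all affine minorants of $h$ and invoking $(4)$ yields $h\le h^{**}$.

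It remains to prove $(3)\Rightarrow(4)$, which is the heart of the theorem; we may assume $h$ is proper (otherwise $(4)$ is trivial). It suffices to show that for every $x_0\in E$ and every real $t_0<h(x_0)$ there is an affine $\alpha\le h$ with $\alpha(x_0)\ge t_0$: then the supremum in $(4)$ is $\ge t_0$ for all such $t_0$, hence $\ge h(x_0)$, and the reverse inequality is obvious. The set $C:=\{(x,t)\in E\times\RRR:h(x)\le t\}$ is nonempty, convex and, by $(3)$, closed, while $(x_0,t_0)\notin C$; so strict separation in $E\times\RRR$ yields a nonzero pair $(\varphi,s)$ with $\langle\varphi,x_0\rangle+st_0<\langle\varphi,y\rangle+sr$ for all $(y,r)\in C$. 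Since $C$ is stable under increasing the last coordinate, $s\ge0$. If $s>0$, normalise $s=1$; taking $(y,r)=(y,h(y))$ for $y\in\textup{dom}\,h$ shows that the affine function $\alpha(y):=t_0+\langle\varphi,x_0-y\rangle$ satisfies $\alpha\le h$ and $\alpha(x_0)=t_0$, as needed. Moreover, if $x_0\in\textup{dom}\,h$ the case $s=0$ cannot occur, for it would force $\langle\varphi,x_0\rangle<\inf_{y\in\textup{dom}\,h}\langle\varphi,y\rangle\le\langle\varphi,x_0\rangle$; in particular, applying the $s>0$ case at any point of the nonempty set $\textup{dom}\,h$ always produces at least one affine minorant of $h$.

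The remaining case is $s=0$, which forces $x_0\notin\textup{dom}\,h$, whence $h(x_0)=+\oo$, and $\varphi\ne0$ with $\langle\varphi,x_0\rangle<\mu:=\inf_{y\in\textup{dom}\,h}\langle\varphi,y\rangle$. Pick any affine minorant $\alpha_1\le h$ (which exists by the previous paragraph) and, for $\lambda>0$, put $\alpha_\lambda(x):=\alpha_1(x)+\lambda\big(\mu-\langle\varphi,x\rangle\big)$. On $\textup{dom}\,h$ the added term is $\le0$, so $\alpha_\lambda\le\alpha_1\le h$ there, and off $\textup{dom}\,h$ we have $h=+\oo\ge\alpha_\lambda$; hence $\alpha_\lambda\le h$ for every $\lambda>0$. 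Since $\mu-\langle\varphi,x_0\rangle>0$, we get $\alpha_\lambda(x_0)\to+\oo$ as $\lambda\to\oo$, so $\sup\{\alpha(x_0):\alpha\le h\ \textup{affine}\}=+\oo=h(x_0)$, which completes the proof of $(3)\Rightarrow(4)$ and hence of the theorem.

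The main obstacle is precisely this final case: a vertical separating hyperplane ($s=0$) produces no affine minorant by itself, so one must establish separately that a genuine affine minorant exists --- which rests on properness of $h$ together with the fact that separation at a point of $\textup{dom}\,h$ is automatically non-vertical --- and then tilt it by a large multiple of the linear functional $x\mapsto\mu-\langle\varphi,x\rangle$. Keeping the strict inequalities, the normalisation $s=1$, and the sign of $\mu-\langle\varphi,x_0\rangle$ mutually consistent is the only delicate bookkeeping; the rest is routine convex analysis.
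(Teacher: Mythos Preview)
Your proof is correct and is the standard Hahn--Banach separation argument for the Fenchel--Moreau theorem. Note, however, that the paper does not prove this statement at all: it simply quotes it as a known result with a reference to Borwein--Lewis (Thm.~4.2.1 in \cite{Bor}) and then applies it. So there is no ``paper's own proof'' to compare against; you have supplied a full proof where the paper was content to cite one.
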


Put for $s \geq 0$
$$\delta(s)=\frac{1}{2}\sup_{t \geq 0}(\w_f(t)-st).$$
\begin{rem} One can show that $\delta(s)$ is the distance of $f$ to the set of all Lipschitz function with Lipschitz constant $\leq s$---namely that the formula:
$\delta(s)=\inf\{\|f-u\|_{\oo}: \Lip(u) \leq s\}<\oo$ is satisfied.
\end{rem}
Then, using the Fenchel transform, one can obtain the relation between $\delta$ and the (classical) modulus of continuity for $f$, namely:
\begin{thm}
Using the above notation and assumption one has
$$\w_f(t)=\inf_{s \geq 0} (2\delta(s)+st).$$
\end{thm}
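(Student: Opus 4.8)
The plan is to recognise the asserted identity as the Fenchel--Moreau biconjugation formula $h^{**}=h$ for the convex function $h:=-\w_f$ extended by $+\oo$ to the negative half--line. Concretely I would put $h:\RRR\to(-\oo,\oo]$, $h(t)=-\w_f(t)$ for $t\ge 0$ and $h(t)=+\oo$ for $t<0$; this $h$ is proper, since $h(0)=0$ and $h$ is finite (and $\le 0$) on $[0,\oo)$ because $\w_f\in\Omega$. One inequality is immediate and needs no transform: for every $s,t\ge 0$ we have $2\delta(s)+st=\sup_{u\ge 0}(\w_f(u)-su)+st\ge(\w_f(t)-st)+st=\w_f(t)$, so $\w_f(t)\le\inf_{s\ge 0}(2\delta(s)+st)$.

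For the reverse inequality I would compute, in the notation of the paper, the Fenchel conjugate
$$h^*(\varphi)=\sup_{t\in\RRR}(\varphi t-h(t))=\sup_{t\ge 0}(\varphi t+\w_f(t)).$$
For $\varphi=-s$ with $s\ge 0$ this is exactly $\sup_{t\ge 0}(\w_f(t)-st)=2\delta(s)$, whereas for $\varphi>0$ it equals $+\oo$ since $\w_f\ge 0$ and $\varphi t\to\oo$. Hence in the biconjugate $h^{**}(t)=\sup_{\varphi}(\varphi t-h^*(\varphi))$ the range $\varphi>0$ contributes $-\oo$ and may be dropped, so for $t\ge 0$
$$h^{**}(t)=\sup_{s\ge 0}\big(-st-2\delta(s)\big)=-\inf_{s\ge 0}\big(2\delta(s)+st\big).$$
Thus the theorem is equivalent to the statement $h^{**}=h$ on $[0,\oo)$.

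To obtain $h^{**}=h$ I would apply the Fenchel--Moreau theorem, whose hypothesis (for a function into $(-\oo,\oo]$) is convexity together with closedness of the epigraph. Convexity of $h$ is clear: $\w_f$ is concave on $[0,\oo)$, so $-\w_f$ is convex there, and adjoining $+\oo$ on $(-\oo,0)$ preserves convexity. For closedness of $\{(t,y):y\ge h(t)\}$ one observes that a convergent sequence of epigraph points has all first coordinates in $[0,\oo)$, and passes to the limit using continuity of $\w_f$; the one delicate case is a limit point with first coordinate $0$, and that is precisely where $\w_f$ being continuous with $\w_f(0)=0$ is used. With $h^{**}=h$ in hand, comparing the two displayed formulas gives $-\w_f(t)=-\inf_{s\ge 0}(2\delta(s)+st)$ for $t\ge 0$, i.e. the claim.

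The only step requiring genuine care is making sure $h$ is nondegenerate for the Fenchel--Moreau machinery, i.e. that $\delta(s)<\oo$ (so that $h^*\not\equiv+\oo$): this holds because $f$ is bounded (e.g. $X$ compact), whence $\w_f$ is bounded by the oscillation of $f$ and $\delta(s)\le\delta(0)=\tfrac12\sup_t\w_f(t)\le\|f\|_{\oo}<\oo$. As a sanity check one can bypass the Fenchel--Moreau theorem entirely: for $t_0>0$ choose $s\ge 0$ to be a supergradient of the concave function $\w_f$ at $t_0$ (automatically nonnegative since $\w_f$ is nondecreasing); then $\w_f(u)-su\le\w_f(t_0)-st_0$ for all $u\ge 0$, with equality at $u=t_0$, so $2\delta(s)=\w_f(t_0)-st_0$ and therefore $\inf_{s'\ge 0}(2\delta(s')+s't_0)\le\w_f(t_0)$; the remaining case $t_0=0$ reduces to $\inf_{s\ge 0}\delta(s)=0$, which follows from $\w_f$ being continuous with $\w_f(0)=0$.
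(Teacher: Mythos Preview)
Your argument is correct and is essentially the same as the paper's: both define an auxiliary convex function equal to $-\w_f$ on a half-line and $+\oo$ on the complement, identify its Fenchel conjugate with $2\delta$, and then invoke Fenchel--Moreau to recover $\w_f$. The only difference is a harmless reflection: the paper sets $\alpha(t)=-\w_f(-t)$ for $t\le 0$ and $+\oo$ for $t>0$, whereas you set $h(t)=-\w_f(t)$ for $t\ge 0$ and $+\oo$ for $t<0$, so $h(t)=\alpha(-t)$; your computation of $h^*(-s)=2\delta(s)$ mirrors the paper's $\alpha^*(s)=2\delta(s)$. Your version is in fact a bit tidier about signs, and your closing supergradient argument (choosing $s$ with $\w_f(u)-su\le\w_f(t_0)-st_0$ for all $u\ge 0$) is a pleasant self-contained alternative that the paper does not give.
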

\begin{proof}
Denote $\w:=\w_f$ and put $\alpha(t)=
\begin{cases}
-\w(-t) \quad if \quad t \leq 0, \\
+\oo \quad if \quad t>0
\end{cases}$,
and also extend the function $\delta$ by putting $\delta(s)=+\oo$ for $s<0$.
Then $\alpha$ is a convex function. Moreover, for $s \leq 0$ we have:
\begin{gather*}
\alpha^*(s)=\sup_{t \in \RRR}\big(st-\alpha(t)\big)=\sup_{t \leq 0}\big(st-\alpha(t)\big)= \\
=\sup_{t \leq 0}\big(st+\w(-t)\big)=\sup_{t \geq 0}\big(\w(t)-st\big)=2\delta(s)
\end{gather*}
(for $s>0$ also holds $\alpha^*(s)=2\delta(s)$ since both sides are infinite).
Hence $\alpha$ satisfies the assumptions of the Fenchel-Moreau Theorem: indeed, $\alpha$ takes finite values on a closed interval and is continuous there, thus the set $\{(t,u) \in \RRR \times (-\oo,\oo]:\alpha(t) \leq u\}$ is closed. Hence from the Fenchel-Moreau Theorem $\alpha^{**}=\alpha$.
Therefore
$$\alpha(s)=\big(2\delta(s)\big)^*=\sup_{t \in \RRR}\big(st-2\delta(t)\big)=
\sup_{t \geq 0}\big(st-2\delta(t)\big),$$
hence for $s \leq 0$ we get
$$\w(-s)=-\sup_{t \geq 0}\big(st-2\delta(t)\big)=\inf_{t \geq 0}\big(2\delta(t)-st\big).$$
Substituting $-s$ for $s \geq 0$ we get the desired result.
\end{proof}
The function realising the distance in the definition of $\delta$ is:
$$f_s:=\delta(s)+\inf_{y \in X}(f(y)+sd(\cdot,y)).$$
Then $\Lip(f_s) \leq s$ (indeed $\Lip(sd(\cdot,y)) \leq s$ and after taking translations this Lipschitz constant remains the same so is after taking infimum), therefore there exists $c \in \RRR$ such that $\frac{1}{s}f_s-c \in K$. Moreover we obtain $\|f-f_s\|_{\oo}=\delta(s)$, since:
\begin{gather*}
f(x)-f_s(x) \leq \delta(s) \iff
f(x)-\delta(s)-\inf_{y \in X}\{f(y)+sd(x,y)\} \leq \delta(s) \iff \\
f(x)+\sup_{y \in X}\{-f(y)-sd(x,y)\} \leq 2 \delta(s) \iff
\sup_{y \in X} \{f(x)-f(y)-sd(x,y)\} \leq 2\delta(s) \\
\end{gather*}
but the last inequality is satisfied since
$$f(x)-f(y)-sd(x,y) \leq |f(x)-f(y)|-sd(x,y) \leq \w\big(d(x,y)\big)-sd(x,y) \leq 2\delta(s)$$
and it suffices to take supremum. \\
Conversely:
\begin{gather*}
f_s(x)-f(x) \leq \delta(s) \iff \delta(s)+\inf_{y \in X}\{f(y)+sd(x,y)\}-f(x) \leq \delta(s) \iff \\
\inf_{y \in X}\{f(y)-f(x)+sd(x,y)\} \leq 0
\end{gather*}
but for $y=x$ we get $f(y)-f(x)+sd(x,y)=0$ thus the infimum of this expression is $\leq 0$. We have shown that $|f(x)-f_s(x)| \leq \delta(s)$ hence $\|f-f_s\| \leq \delta(s)$. \par
Now let $\pi,\pi' \in \Rep(C(X))$---in this case we estimate:
$$\|\pi(f_s)-\pi'(f_s)\|=s\|\pi\big(\frac{1}{s}f_s-c\big)-\pi'\big(\frac{1}{s}f_s-c\big)\| \leq sd_K(\pi,\pi')$$ hence we get:
\begin{gather*}
\|\pi(f)-\pi'(f)\| \leq \|\pi(f)-\pi(f_s)\|+\|\pi(f_s)-\pi'(f_s)\|+\|\pi'(f_s)-\pi'(f)\| \\
 \leq \|f-f_s\|+sd_K(\pi,\pi')+\|f-f_s\| \leq 2\delta(s)+sd_K(\pi,\pi').
\end{gather*}
Fixing $t \geq 0$ and taking supremum gives:
$$\sup\{\|\pi(f)-\pi'(f)\|: \pi,\pi' \in \Rep(C(X)), d_K(\pi,\pi') \leq t \} \leq \inf_{t \geq 0}(2\delta(s)+st)=\w(t)$$
which yields $\w_f^{\Rep}=\w$. Therefore we see that for a \textit{real valued} function 
it does not matter whether we consider its modulus of continuity defined using \textit{all} representations or using only \textit{irreducible} ones. \par
In the general (complex-valued) case we have the following situation: for $f \in C(X)$ express $f$ as $f=u+iv$ where $u,v$ are real-valued. We obtain
$$\w_f=\w_{u+iv} \leq \w_u+\w_v,$$
$$\w_u=\w_{\frac{f+f^*}{2}} \leq \underbrace{\w_{\frac{f}{2}}}_{=\frac{1}{2}\w_f}+\underbrace{\w_{\frac{f^*}{2}}}_{=\frac{1}{2}\w_f}=\w_f,$$
Similarly we have:
$$\w_v=\w_{\frac{f-f^*}{2i}} \leq \w_{\frac{f}{2}}+\w_{\frac{f^*}{2}} \leq \w_f$$
from which it follows:
\begin{equation}\label{rownowazne1}
\w_f \leq \w_f^{\Rep} \leq \w_u^{\Rep}+\w_v^{\Rep}=\w_u+\w_v \leq 2\w_f.
\end{equation}

\section{Interpretation of $\Rep(-)$}
We have used the space $\Rep(\Aa)$ heavily in our considerations, therefore let us briefly discuss the interpretation of this space in the commutative case where $\Aa=C(X)$. It turns out that $\Rep(\Aa)$ has the following interpretation:
$$\Rep\big(C(X)\big)=\{\textup{spectral\ measures\ on $X$}\}.$$
We briefly explain how do we understand the above identification:
let $\pi:C(X) \to \Bb(\HHh)$ be a *-representation and let $\xi,\eta \in \HHh$.
Consider the mapping
$$\varphi_{\xi,\eta}:C(X) \ni f \to \langle \pi(f)\xi,\eta \rangle \in \CCC.$$
This is a linear and bounded functional: it satisfies $\|\varphi_{\xi,\eta}\| \leq \|\xi\|\|\eta\|$, thus from the Riesz representation theorem there exists exactly one (regular, Borel) measure $\mu_{\xi,\eta}$ such that
$$\langle \pi(f)\xi,\eta \rangle=\int_X fd\mu_{\xi,\eta}$$
and $\| \varphi_{\xi,\eta} \|=\| \mu_{\xi,\eta} \|(=|\mu_{\xi,\eta}|(X))\leq \|\xi\|\|\eta\|$. Fix a Borel set $A$ and consider the mapping
$$(\xi,\eta) \mapsto \mu_{\xi,\eta}(A).$$
Then it is a sesquilinear, bounded form on $\HHh$, thus it corresponds to the unique (bounded) operator which we denote by $E(A)$. By the direct calculation we check that the following conditions are satisfied:
\begin{itemize}
\item $0 \leq E(A) \leq I$,
\item $E(X)=I$,
\item $E(\bigcup_{n=1}^{\oo}A_n)=WOT-\sum_{n=1}^{\oo}E(A_n)$ for a sequence of pairwise disjoint Borel sets $\{A_n\}_{n \in \NNN}.$
\end{itemize}
A property which is nontrivial is $E(A \cap B)=E(A)E(B)$. This equality is true for all sets which are measurable with respect to the sigma algebra generated by closed $G_{\delta}$ sets.
In the metrisable case every closed set is automatically $G_{\delta}$ thus the equality holds for all sets which are measurable with respect to sigma algebra generated by all closed sets, i.e. for all Borel sets. In the general, not necessarily metrisable case, one can use the regularity of the measures $\mu_{\xi,\eta}$ together with the fact that for any pair of sets  $K,U$ such that $K$ is closed, $U$ is open and $K \subset U$ one can find a closed $G_{\delta}$ set $F$ such that $K \subset F \subset U$.

In this context it is worth to mention the formula which is due to Kantorovich and which allows to extend the metric from the space $X$ to the space of all possible probabilistic measures on $X$. In more details: if $(X,d)$ is a metric space then one can extend the metric $d$ to the set $\Prob(X)$ of all regular, Borel, probabilistic measures on $X$ using the formula
\begin{equation}\label{Kantorovich}
\tilde{d}(\mu,\nu)=\sup\{\Big|\int_Xfd\mu-\int_Xfd\nu\Big|: f:X \to \RRR,\ f \ \textup{is a contractive map }\}.
\end{equation}
We are using here the identification $x \simeq \delta_x$ where
$\delta_x(A)=
\begin{cases}
1, \ x \in A \\
0, \ x \notin A.
\end{cases}$ \\
Then
$$\Big|\int_Xfd\delta_x-\int_Xfd\delta_y\Big|=|f(x)-f(y)|$$
hence $\tilde{d}(\delta_x,\delta_y)=d(x,y)$ thus indeed we get an extension of $d$.
It is not hard to show that $\tilde{d}$ indeed defines a metric. Moreover one can describe the topology induced by $\tilde{d}$:
\begin{thm} The topology induced by the metric $\tilde{d}$ coincides with the weak-* topology.
\end{thm}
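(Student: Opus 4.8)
The plan is to show that the identity on $\Prob(X)$ is a homeomorphism between the $\tilde{d}$-topology and the weak-$*$ topology by checking continuity in each direction. Since $X$ is compact metric, $C(X)$ is separable, so the norm-bounded set $\Prob(X) \subset C(X)^*$ is weak-$*$-metrisable; the $\tilde{d}$-topology is metrisable by definition. Hence it suffices to verify sequential continuity of the identity in both directions. I shall use repeatedly that for $\mu,\nu \in \Prob(X)$ the difference $\int_X f\,d\mu - \int_X f\,d\nu$ is unchanged when $f$ is replaced by $f+c$ with $c \in \RRR$; consequently in the supremum defining $\tilde{d}$ one may restrict to the set $K_0$ of all $1$-Lipschitz $f : X \to \RRR$ with $f(x_0) = 0$, for a fixed basepoint $x_0 \in X$.

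First I would treat the implication $\tilde{d}(\mu_n,\mu) \to 0 \Rightarrow \mu_n \to \mu$ weak-$*$. Every Lipschitz function on $X$ is a scalar multiple of a contraction, so for such a function $g$ the definition of $\tilde{d}$ gives $\int_X g\,d\mu_n \to \int_X g\,d\mu$. For an arbitrary $g \in C(X)$ and $\eps > 0$, pick a Lipschitz $h$ with $\|g-h\|_\infty < \eps$ (possible since $X$ is compact), and use that $\mu_n,\mu$ are probability measures to estimate
\[
\Big|\int_X g\,d\mu_n - \int_X g\,d\mu\Big| \le 2\|g-h\|_\infty + \Big|\int_X h\,d\mu_n - \int_X h\,d\mu\Big| ;
\]
letting $n \to \infty$ and then $\eps \to 0$ yields weak-$*$ convergence.

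The substantive direction is $\mu_n \to \mu$ weak-$*$ $\Rightarrow$ $\tilde{d}(\mu_n,\mu) \to 0$. Here I would invoke the Ascoli-Arzela theorem: $K_0$ is uniformly bounded (by $\diam X$) and equicontinuous (its members are $1$-Lipschitz), hence compact, and in particular totally bounded, in $(C(X),\|\cdot\|_\infty)$. Fix $\eps > 0$, choose $f_1,\dots,f_m \in K_0$ forming an $\eps/3$-net for $K_0$, and pick $N$ with $\big|\int_X f_j\,d\mu_n - \int_X f_j\,d\mu\big| < \eps/3$ for all $n \ge N$ and $j \le m$. Given a contraction $f$, write $f = g + f(x_0)$ with $g \in K_0$, choose $j$ with $\|g-f_j\|_\infty < \eps/3$, and estimate
\[
\Big|\int_X f\,d\mu_n - \int_X f\,d\mu\Big| = \Big|\int_X g\,d\mu_n - \int_X g\,d\mu\Big| \le 2\|g-f_j\|_\infty + \Big|\int_X f_j\,d\mu_n - \int_X f_j\,d\mu\Big| < \eps.
\]
Taking the supremum over all contractions $f$ gives $\tilde{d}(\mu_n,\mu) \le \eps$ for $n \ge N$, so $\tilde{d}(\mu_n,\mu) \to 0$.

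Combining the two implications with the metrisability noted above, the identity map between the two topologies is continuous in both directions, hence the topologies coincide. The only genuine obstacle is the implication weak-$*$ $\Rightarrow \tilde{d}$: convergence tested against each fixed Lipschitz function does not by itself control the supremum over \emph{all} contractions, and the missing uniformity is supplied precisely by the compactness of the normalised family $K_0$ furnished by Ascoli-Arzela.
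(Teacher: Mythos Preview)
Your proof is correct and follows essentially the same approach the paper intends: the paper's own ``proof'' simply says the argument is analogous to Theorem~\ref{punktnorm}, and your two directions---density of Lipschitz functions for $\tilde d \Rightarrow$ weak-$*$, and an $\eps/3$-net coming from Ascoli--Arzel\`a compactness of the normalized contractions for weak-$* \Rightarrow \tilde d$---are precisely that template specialized to $\Prob(X)$. Your explicit justification that both topologies are metrisable (so sequences suffice) is a welcome piece of care that the paper leaves implicit.
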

\begin{proof}
The proof is analogous to the proof of Theorem \ref{punktnorm}.
\end{proof}
What is more, the mapping $(X,d) \mapsto (\Prob(X),\tilde{d})$ is functorial: any continuous map $f:(X,d_X) \to (Y,d_Y)$ induces $f_*:(\Prob(X),\tilde{d_X}) \to (\Prob(Y),\tilde{d_Y})$ via the transport of measure $f_*(\mu)=\mu \circ f^{-1}$. Those \textit{pushforwards} satisfy $(f\circ g)_*=f_* \circ g_*$. If $(X,d)$ is an infinite metric space then $(\Prob(X),\tilde{d})$ is homeomorphic to the Hilbert cube (this follows from Keller's Theorem, see e.g. \cite{Bes}, Theorem 3.1).

\section{Further problems}
Finally let us try to discuss briefly the questions which are natural and for which we did not give the answer. \par
We have investigated various properties of compact $C^*$-algebras: in particular we have seen that the class of compact $C^*$-algebras is closed under taking ideals, direct sums as well as quotient operation. It is natural to ask the following: 
\begin{prob}
Is it true that if $\Aa$ is a compact $C^*$-algebra (unital and separable) and $\Bb \subset \Aa$ is a $C^*$-subalgebra then $\Bb$ is compact as well?
\end{prob}

Assume that $K$ is a compact set generating $C^*$-algebra $\Aa$. Then for $a \in \Aa$ we can consider  the moduli of continuity $\w^K_a$ and define the set $\tilde{K}=\{a \in \Aa: \w^K_a(t) \leq t,\ t \in [0,\oo)\}$. Then obviously $K \subset \tilde{K}$, so in particular $\tilde{K}$ also generates $\Aa$. Obviously $\tilde{K}$ is not bounded since for $\lambda \in \CCC$ we have $\w^K_{\lambda 1}=0$. But still we can define $K':=\tilde{K} \cap B$ where $B$ is the (closed) ball with radius $\diam K$. In this way we obtain a bounded generating set---hence it makes sense to consider the metric $d_{K'}$.
\begin{prob} What is the topology of $d_{K'}$?
\end{prob}

\begin{rem}
Note that obviously  $K'$ is equicontinuous and bounded---thus if $\Aa$ has strong Ascoli property then $K'$ is compact and $d_{K'}$ is uniformly equivalent to $d_K$. In particular this will hold for compact $C^*$-algebras.
\end{rem}
In our considerations we have defined the notion of pointwise convergence using the family of seminorms $a \mapsto \|\pi(a)\|$ where $\pi \in \Sigma(\Aa)$. We can also consider only those seminorms which come from irreducible representations (or more precisely representations of the form $\aleph_0 \odot \pi$ where $\pi$ is irreducible). This is obviously a weaker condition. The natural question is whether these two notions coincide:
\begin{prob} Is it possible to construct a net $(a_s)_s \subset \Aa$ such that for any irreducible representation $\pi \in \Irr(\Aa)$ we get $\pi(a_s) \to 0$ but $(a_s)_s$ do not converge to $0$ pointwise?
\end{prob}

Finally it would be interesting to know how the set $\Sigma(\Aa)$ may look like for various $C^*$-algebras. For example we have already seen that if $\Aa$ is $N$-subhomogenous $C^*$-algebra then
$\Sigma(\Aa)$ contains at most $N$-dimensional representations (more precisely, representations of the form  $\aleph_0 \odot \pi$ where $\pi$ is at most $N$-dimensional).

\begin{prob} Is it true that if $\Aa$ is a unital CCR-algebra then it may happen that $\Sigma(\Aa)$ contains a representation which is not of the form $\aleph_0 \odot \pi$ where $\pi$ is finite dimensional?
\end{prob}

\addcontentsline{toc}{section}{Bibliografia}

\end{document}